\numberwithin{equation}{section}
\theoremstyle{plain}
\newtheorem{theorem}{Theorem}[section]
\newtheorem{lemma}[theorem]{Lemma}
\newtheorem{proposition}[theorem]{Proposition}
\newtheorem{corollary}[theorem]{Corollary}
\theoremstyle{definition}
\newtheorem{definition}[theorem]{Definition}
\newtheorem{example}[theorem]{Example}
\newtheorem{noname}[theorem]{}
\newtheorem{subnoname}{}[theorem]
\newtheorem{remark}[theorem]{Remark}
\newtheorem{construction}[theorem]{Construction}
\newtheorem{notation}[theorem]{Notation}
\theoremstyle{remark}
\newtheorem*{smallremark}{Remark}
\newtheorem{case}{Case} \makeatletter \@addtoreset{case}{theorem}\makeatother
\newtheorem{claim}{Claim} 
\newcommand{\bthm}{\begin{theorem}}
\newcommand{\bprop}{\begin{proposition}}
\newcommand{\blem}{\begin{lemma}}
\newcommand{\bcor}{\begin{corollary}}
\newcommand{\brem}{\begin{remark}}
\newcommand{\bdfn}{\begin{definition}}
\newcommand{\bitem}{\begin{itemize}}
\newcommand{\benum}{\begin{enumerate}}
\newcommand{\bex}{\begin{example}}
\newcommand{\bno}{\begin{noname}}
\newcommand{\bsno}{\begin{subnoname}}
\newcommand{\bsrem}{\begin{smallremark}}
\newcommand{\bnot}{\begin{notation}}
\newcommand{\bcon}{\begin{construction}}
\newcommand{\bca}{\begin{case}}
\newcommand{\bcl}{\begin{claim}}
\newcommand{\beq}{\begin{equation}}
\newcommand{\bpf}{\begin{proof}}
\newcommand{\epf}{\end{proof}}
\newcommand{\eeq}{\end{equation}}
\newcommand{\ecl}{\end{claim}}
\newcommand{\eca}{\end{case}}
\newcommand{\econ}{\end{construction}}
\newcommand{\enot}{\end{notation}}
\newcommand{\esrem}{\end{smallremark}}
\newcommand{\eno}{\end{noname}}
\newcommand{\esno}{\end{subnoname}}
\newcommand{\eex}{\end{example}}
\newcommand{\eitem}{\end{itemize}}
\newcommand{\eenum}{\end{enumerate}}
\newcommand{\ethm}{\end{theorem}}
\newcommand{\eprop}{\end{proposition}}
\newcommand{\elem}{\end{lemma}}
\newcommand{\ecor}{\end{corollary}}
\newcommand{\erem}{\end{remark}}
\newcommand{\edfn}{\end{definition}}
\newcommand{\ble}{\begin{lemma}}
\newcommand{\ele}{\end{lemma}}
\newcommand{\ov}{\overline}
\newcommand{\wt}{\widetilde}
\newcommand{\cal}[1]{\mathcal{#1}}
\newcommand{\ds}{\displaystyle}
\def\8{\infty}
\def\.{\cdot}
\def\PP{\mathbb{P}}
\def\F{\mathbb{F}}
\def\C{\mathbb{C}}
\def\Z{\mathbb{Z}}
\def\Q{\mathbb{Q}}
\def\ovk{\overline\kappa}
\newcommand{\To}{\rightarrow}
\def\mono{\hookrightarrow}
\def\:{\colon}
\renewcommand{\iff}{\Leftrightarrow}
\newcommand{\noin}{\noindent}
\def\Bk{\operatorname{Bk}}
\def\Sing{\operatorname{Sing}}
\def\Supp{\operatorname{Supp}}
\def\Bk{\operatorname{Bk}}
\def\Sing{\operatorname{Sing}}
\def\Supp{\operatorname{Supp}}
\def\Spec{\operatorname{Spec}}
\def\NS{\operatorname{NS}}
\def\id{\operatorname{id}}
\newcommand{\ol}{\overline}
\newcommand{\ti}{\tilde}
\def\ks{K_{\ov S}}
\ifpdf \usepackage[linkbordercolor={0 0 1}]{hyperref} \else \usepackage[hypertex,linkbordercolor={0 0 1}]{hyperref} \fi
\begin{document}

\title[The geometry of sporadic $\C^*$-embeddings into $\C^2$]{The geometry of sporadic $\C^*$-embeddings into $\C^2$}

\author[Mariusz Koras]{Mariusz Koras}
\address{Mariusz Koras: Institute of Mathematics, University of Warsaw, ul. Banacha 2, 02-097 Warsaw}\email{koras@mimuw.edu.pl}
\author[Karol Palka]{Karol Palka}
\address{Karol Palka: Institute of Mathematics, Polish Academy of Sciences, ul. \'{S}niadeckich 8, 00-956 Warsaw, Poland}
\email{palka@impan.pl}
\author[P.Russell]{Peter Russell}
\address{Peter Russell: McGill University, Montreal, Canada}\email{russell@math.mcgill.ca}

\thanks{The first and the second author were supported by the National Science Center, Poland, Grant No. 2013/11/B/ST1/02977. The second author was partially supported by the Foundation for Polish Science under the Homing Plus programme, cofinanced from the European Union, RDF. The third author was supported by NSERC, Canada.}

\subjclass[2000]{Primary: 14H50; Secondary: 14R99, 14J26}
\keywords{Embedding, complex plane, punctured affine line, asymptote, coordinates}

\begin{abstract} A closed algebraic embedding of $\C^*=\C^1\setminus\{0\}$ into $\C^2$ is \emph{sporadic} if for every curve $A\subseteq \C^2$ isomorphic to an affine line the intersection with $\C^*$ is at least $2$. Non-sporadic embeddings have been classified. There are very few known sporadic embeddings. We establish geometric and algebraic tools to classify them based on the analysis of the minimal log resolution $(X,D)\to (\PP^2,U)$, where $U$ is the closure of $\C^*$ on $\PP^2$. We show in particular that one can choose coordinates on $\C^2$ in which the type at infinity of the $\C^*$ and the self-intersection of its proper transform on $X$ are sharply limited.
\end{abstract}

\maketitle

\section{Main result and discussion}

We continue the analysis of closed algebraic embeddings of $\C^*=\C^1\setminus \{0\}$ into the complex affine plane $\C^2$ initiated in \cite{CKR-Cstar_good_asymptote}, where embeddings admitting a good asymptote have been classified.

\begin{definition}\label{asymptote} Let $U\subset \C^2$ be a closed curve isomorphic to $\C^*$. A curve $A\subset \C^2$ isomorphic to the affine line $\C^1$ is called a \textit{good asymptote} of $U$ if and only if $A\cdot U\leq 1$. If $U\subset \C^2$ does not admit a good asymptote we call the embedding \emph{sporadic}.\end{definition}

Note that the intersection is taken in $\C^2$, so the definition is independent of the choice of coordinates, i.e.\ of a choice of generators of the algebra of regular functions on $\C^2$. Surprisingly, although the defining condition of sporadic embeddings seems to be weak, up to now we know only very few of them: one discreet family with no deformations and one more embedding, see \cite[Main Theorem (s),(t)]{BoZo-annuli} (note that the list in loc.\ cit.\ is produced assuming strong 'regularity condition'). The goal of this article is to establish geometric and algebraic machinery which allows to prove strong restrictions on sporadic $\C^*$-embedding in terms of the resolution of singularities of their closures on $\PP^2$. With these tools in hand we are going to obtain the full classification in a forthcoming paper.

We introduce the following numbers characterizing the embedding. Let $(\lambda,P)$ be an analytically irreducible germ of a planar curve and let $L$ be a curve smooth at $P$ which does not cross $\lambda$ normally (i.e.\ which is tangent to $\lambda$ at $P$ in case $\lambda$ is smooth). The \emph{jumping number} $j(\lambda, L)$ of $\lambda$ \emph{with respect to L} is the maximal number of blowups on the proper transform of $\lambda$ after which $\lambda$ meets the total transform of $L$ not in a node. In particular, $j(\lambda, L)=0$ if and only if $L$ is tangent to $\lambda$. If $\lambda$ is singular and $(\eta,P)$ is a smooth germ maximally tangent to $\lambda$, i.e., such that $\lambda\cdot \eta$ is maximal for intersections of $\lambda$ with smooth germs, then $j(\lambda, L)$ is the integral part of $\lambda\cdot \eta$/$\lambda\cdot L$. We write $\C^2=\PP^2\setminus L_\8$, where $L_\8$ is a line (degree $1$ curve) in $\PP^2$, called the \emph{line at infinity}.

\begin{definition}\label{def:type} Let $U\subset \C^2=\PP^2\setminus L_\8$ be a closed curve and let $\lambda_1, \lambda_2,...,\lambda_k$ be the germs at
infinity of the closure $\bar U$ of $U$ in $\PP^2$ ordered so that the jumping numbers $j_i=j(\lambda_i,L_\8)$ do not decrease with $i$.  We then call
$(j_1,j_2,...,j_k)$ the \textit{type at infinity of} $U$.
\end{definition}

Note that the identification $\C^2=\PP^2\setminus L_\8$ and the type of a curve at infinity depend on a choice of coordinates. Another number associated to an embedding $U\subset \C^2=\PP^2\setminus L_\8$ is the self-intersection of the proper transform of the closure $\bar U$ of $U$ under the minimal log resolution of singularities of $(\PP^2,\bar U +L_\8)$, i.e., after the minimal number of blowups so that the total transform of $\bar U +L_\8$ is a simple normal crossing divisor. For $U\cong \C^*$ our main result proves the existence of special coordinates with respect to which these numbers are sharply limited.
\begin{theorem}\label{thm:main_result} For every sporadic $\C^*$-embedding $U\subset \C^2$ one can choose coordinates on $\C^2$ so that:
\begin{enumerate}
\item the branches at infinity of $U$ are disjoint and the type at infinity of $U$ is $(1,\ti j)$ for some $\ti j\in\{2,3,4,5,6\}$ and
\item if $\bar U$ denotes the closure of $U$ on $\PP^2=\C^2\cup L_\8$ then the proper transform of $\bar U$ under the minimal log resolution of $(\PP^2,\bar U+L_\8)$ has self-intersection between $-2$ and $-5$.
\end{enumerate}
\end{theorem}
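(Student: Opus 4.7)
The approach is to work on the minimal log resolution $\pi\colon(X,D)\to(\PP^2,\bar U+L_\8)$, with $D:=\pi^{-1}(\bar U+L_\8)_{\mathrm{red}}$, and to exploit the freedom of changing coordinates on $\C^2$---equivalently, of choosing a different contraction $X\to\PP^2$ via a $(-1)$-component of $D$---to normalize the weighted dual graph of $D$ near infinity. Since $U\cong\C^*$ has two ends, $\bar U$ meets $L_\8$ either in two distinct smooth points (branches disjoint) or in a single point carrying two branches. In the second case I would blow up that common point and contract a $(-1)$-curve in $D$ different from the proper transform of $L_\8$, iterating until the branches of the image curve lie over distinct points of the new line at infinity. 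With the branches now disjoint, let $(j_1,j_2)$ be the type at infinity. Reducing $j_1$ to $1$ is then achieved by a similar elementary-transformation argument: if $j_1=0$ a suitable line through the first point at infinity has intersection with $U$ contradicting the sporadic hypothesis, while if $j_1\geq 2$ an intermediate coordinate change strictly decreases $j_1$ via an explicit Cremona-type move on $X$ that preserves the isomorphism class of $X\setminus D$.

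To bound $\ti j:=j_2\leq 6$, I would argue by contradiction: assume $j_2\geq 7$. Then the chain of exceptional components arising from the resolution at the second branch is long enough that, by combining the intersection form on $X$, the constraint $\bar U'\cong\PP^1$, and the Picard-rank bookkeeping, one can locate a $(-1)$-curve $E\subset X$ not contained in $D$ that meets $\bar U'$ at most once. Contracting $E$ together with a suitable sub-chain of $D$ produces a new compactification $\C^2\mono\PP^2$ in which the image of $E$ becomes an affine line $A\subset\C^2$ with $A\cdot U\leq 1$, contradicting the sporadic hypothesis. I expect this step to be the main obstacle: converting the algebraic ``no good asymptote'' condition into a sharp combinatorial bound on $j_2$ requires constructing candidate asymptotes directly inside the log resolution, and verifying that each elementary transformation really descends to a genuine change of coordinates on $\C^2$---i.e., that $X$ contracts all the way down to $\PP^2$ with some distinguished line playing the role of $L_\8$. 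The threshold $j_2=6$ should emerge as the largest value for which no such construction succeeds.

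Once $j_1=1$ and $j_2\in\{2,\ldots,6\}$ are established, bounding the self-intersection is largely numerical. Writing $d:=\deg\bar U$ and $m_p$ for the multiplicity of $\bar U$ at the center of the $p$-th blowup of $\pi$, the formula $(\bar U')^2=d^2-\sum_p m_p^2$, together with $p_a(\bar U')=0$ (since $\bar U'$ is a smooth rational curve meeting $D-\bar U'$ at exactly two points), restricts the multiplicity sequences to a short finite list for each value of $\ti j$. A direct case-by-case enumeration then yields $(\bar U')^2\in\{-2,-3,-4,-5\}$, with the lower bound again invoking the sporadic hypothesis to rule out configurations in which $\bar U'$ would be too negative and force the existence of a good asymptote through a residual $(-1)$-contraction on $X$.
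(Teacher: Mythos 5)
Your plan inverts the actual difficulty of the statement. The deepest part of the theorem is the self-intersection bound, which you dismiss as ``largely numerical'': the inequality $(\bar U')^2\geq -5$ (i.e.\ $\gamma\leq 5$ in the paper's notation) cannot be obtained from $p_a=0$ and the intersection form, because the degree $d$ and the multiplicity sequences are a priori unbounded, so there is no ``short finite list'' to enumerate. The paper reaches $\gamma\leq 5$ only after proving that $S\setminus U$ is of log general type and that the relevant pairs are almost minimal, introducing the surgered surface $Y=\ov S\setminus Q$ with $\chi(Y)=-1$, establishing almost minimality of $(\ov S,Q)$ and of the one-sided surgeries (\ref{lem:Y minimal} and the lemmas following it), and then applying the logarithmic Bogomolov--Miyaoka--Yau inequality \ref{lem:BMY} together with Fujita's structure theorems; this occupies all of Section \ref{sec:surgeries}. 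Likewise the lower bound $\gamma\geq 2$ is not automatic: already $\gamma\geq 1$ uses the nonexistence of a good asymptote (\ref{lem:eps_and_gamma}(iii)), and $\gamma\geq 2$ requires a $\PP^1$-fibration analysis showing that $\gamma=1$ would force $S\setminus U$ to contain an open subset of Kodaira dimension one (\ref{prop:gamma>1}).

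The second genuine gap is your treatment of the type. The claim that $j_1=0$ is excluded because ``a suitable line through the first point at infinity has intersection with $U$ contradicting the sporadic hypothesis'' does not work: no such line is available in general, and the paper spends all of Section 5 eliminating types $(0,\ti j)$ --- for $\ti j\leq 2$ by a descending induction on $\deg\bar U$ through explicit degree-lowering coordinate changes (\ref{prop:type_(0,0-1-2)}), and for $\ti j\geq 3$ by a long case analysis that already relies on $\gamma\leq 5$ and the log BMY inequality. The complementary bound $j\leq 1$ comes not from a Cremona move but from the fact that $j\geq 2$ would produce a $\C^*$-fibration of $S\setminus U$, contradicting log general type (\ref{lem:j<=1}). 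Similarly, $\ti j\leq 6$ is not proved by constructing a good asymptote when $\ti j\geq 7$; it falls out of the numerical identity \eqref{eq:h+hB}, $h+j+\ti h+\ti j=\gamma+\varepsilon+2$, combined with $h,\ti h\geq 1$, $\gamma\leq 5$ and the basic inequality $2\varepsilon+\gamma\leq 7+t$. Your instinct that the sporadic hypothesis must be converted into the nonexistence of certain $(-1)$-curves is correct, but in the paper every such conversion is mediated by almost minimal models and the classification of open surfaces of non-general type, none of which your outline supplies; as written, the proposal is a statement of intent rather than a proof.
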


\bsrem In elementary planar geometry, a choice of coordinates on $\C^2$ having been made, an \textit{asymptote} of a closed curve $U$ is a straight line tangent to a branch of $U$ at infinity. If $A$ is a good asymptote in the sense of \ref{asymptote} then, since $A$ is an affine line, we can by the Abhyankar-Moh-Suzuki theorem choose coordinates so that $A$ is a straight line. Then, assuming $U$ is irreducible and of degree at least three, $A$ is tangent to $U$ at infinity. It follows that a good asymptote of $U\cong \C^*$ is an asymptote in suitable coordinates. Let $U\subseteq \C^2$ be a sporadic $\C^*$-embedding. Interestingly, although $U$ does not have a good asymptote, by \ref{thm:main_result}(1) both jumping numbers are positive, so the branches at infinity of $U$ are not tangent to the line at infinity. Therefore, $U$ has an asymptote at each of its two points at infinity, each of them meets $U$ at lest twice on $\C^2$, see \ref{asymptotes}.
\esrem

We now discuss our approach. The first step to understand the geometry of sporadic $\C^*$-embeddings was made in \cite{KR-Some_properties_of_Cstar}, where it has been proved that one can choose coordinates on $\C^2$ so that the closure of $U$ in $\PP^2$ meets the line at infinity in exactly two points, i.e., so that the two branches at infinity of $\C^*$ are separated.\footnote{We recall that in contrast there are $\C^*$-embeddings admitting a good asymptote for which the branches at infinity meet for every choice of coordinates, \cite[6.8.1]{CKR-Cstar_good_asymptote}.} Remembering that the known sporadic embeddings are given by very special equations (or parametrizations) which we need to somehow recover, from the very beginning we need to have a precise control over the singularities and their behaviour under subsequent steps of the log resolution $\Phi\: (\ov S',D'+E')\to (\PP^2,L_\8+\bar U)$, where $E'$ and $D'$ are respectively the proper transform of $\bar U$ and the reduced total transform of $L_\8$. This is achieved using Hamburger Noether pairs of the resolution (see sec. \ref{ssec:HN}) and two fundamental equations \eqref{eq:1} and \eqref{eq:2} relating them with properties of $E'$. Since the proper transform of $L_\8$ on $\ov S'$ may be a non-branching $(-1)$-curve, we need to consider an snc-minimalization $\Psi\:(\ov S',D'+E')\to (\ov S,D)$. The basic characteristic numbers are $\varepsilon=2-(K_{\ov S}+D)^2$ and $\gamma=-E^2$, where $E$ is the proper transform of $U$ on $\ov S$. They are bounded by $\gamma\geq 1$ and $\varepsilon\geq 0$ (\ref{lem:eps_and_gamma}). The nonexistence of a good asymptote turns out to have strong consequences: we obtain $2\varepsilon+\gamma\leq 9$ (\ref{prop:basic_inequality}) and we show that the pair $(\ov S,D)$ is almost minimal and of log general type (\ref{lem:eps_and_gamma}(iv), \ref{lem:S-E_has_k=2}). Further bounds are obtained in section \ref{sec:surgeries} by studying the geometry of the pair $(\ov S,Q=D+E-C-\ti C)$, where $C$ and $\ti C$ are the last curves produced by the log resolution over each branch at infinity. Importantly, the surface $Y=\ov S\setminus Q$ has negative Euler characteristic and the pair $(\ov S,Q)$ turns out to be almost minimal. These facts constitute a basis of the improved bound $\gamma\leq 5$ (see \ref{prop:gamma<=5}). In many proofs we heavily rely on the logarithmic version of the Bogomolov-Miyaoka-Yau inequality \ref{lem:BMY} and on structure theorems for quasi-projective surfaces of non-general type. The nonexistence of a good asymptote is used again via \ref{lem:c1B-p1>=2} in sections $5$ and $6$, where we bound the possible types of $U$ at infinity.

\tableofcontents

\section{Preliminaries}

\subsection{Surfaces, divisors and minimal models}\label{ssec:generalities} We recall some notions and results from the theory of open algebraic surfaces.  We refer the reader to \cite[\S2]{Miyan-OpenSurf} and \cite[\S3]{Fujita-noncomplete_surfaces} for details.

Let $X$ be a normal projective surface and $T=\sum_{i=1}^n m_iT_i$ a divisor contained in the smooth part of $X$ with $T_1,\ldots, T_n$ distinct, irreducible curves. We say that the pair $(X,T)$ is \emph{smooth} if $X$ is smooth and $T$ is a reduced simple normal crossing (snc) divisor. A $(b)$-curve on $X$ is a curve $L\cong \PP^1$ with $L^2=b$. An snc divisor $T$ is snc-minimal if a contraction of any $(-1)$-curve contained in $T$ leads to a divisor which is not snc. We call $Q(T)=(T_i\cdot T_j)_{1\leq i,j\leq n }$ the \emph{intersection matrix} of $T$ and we define the discriminant of $T$ by $d(T)=\det(-Q(T))$. We put $d(T)=1$ if $T=0$. If $T=T_1+\ldots+T_n$ is an ordered chain of rational curves (by definition the components of a chain are smooth) we write $T=[-T_1^2,\ldots,-T_n^2]$. We have, by elementary linear algebra, \begin{equation}\label{eq:d(T)_recurrence} d(T)=(-T_1^2)d(T_2+\ldots+T_n)-d(T_3+\ldots+T_n). \end{equation} The formula implies in particular that for \emph{admissible} chains, i.e.\ the ones with $T_i^2\leq -2$ for all $i$, one has $d(T-T_1)<d(T)$. A chain of $(-2)$-curves of length $m$ is denoted by $[(2)_m]$.

The linear equivalence of integral divisors and the numerical equivalence of divisors are denoted by $\sim$ and $\equiv$ respectively. If $\sigma\:X'\to X$ is a blowup then we say that it is \emph{inner} (\emph{outer}) for $T$ if the center belongs to exactly two (resp. exactly one) component of $T$.

\label{srem:weighted graph}Assume $T$ is a reduced snc divisor with connected support and a negative definite intersection matrix. We denote the local fundamental group of the analytic singularity arising from the contraction of $T$ by $\Gamma(T)$. If the singularity is of quotient type, i.e.\ analytically isomorphic to $0\in \C^2//G$ for some finite group $G<GL(2,\C)$, then we say that $T$ {\it is of quotient type}. In the latter case the singularity is algebraic (in fact rational). We allow the possibility that $G\cong \{1\}$, in which case the corresponding point is smooth. If $Q(T)$ is not negative definite we put formally $|\Gamma(T)|=\infty$. It is known (see \cite{Mumford}) that if $T$ has rational components then $\Gamma(T)$ depends only on the weighted dual graph of $T$. Also, $d(T)$ is the order of the first local integral homology group, which is the abelianization of $\Gamma(T)$. Moreover, $\Gamma(T)$ is finite if and only if the corresponding singularity is of quotient type.

Assume $T$ is of quotient type and snc-minimal. Then $T$ is a rational tree and contains at most one branching component (see \cite{Brieskorn}). If it does contain one (i.e.\ $T$ is a fork) then the corresponding singularity is non-cyclic, $T$ contains a unique branching component, which has self-intersection $b\leq -2$, and three rational admissible chains attached to it with discriminants $(d_1,d_2,d_3)=(2,3,3), (2,3,4), (2,3,5)$ or $(2,2,n)$ for some $n\geq 2$. The sequence $(-b;d_1,d_2,d_3)$ (and also the triple $(d_1,d_2,d_3)$), is called the \emph{type} of the fork $T$. If $T$ is a chain then either $T=[1]$ (if $\Gamma(T)=\{\id\}$) or $T$ is admissible and the corresponding singularity is cyclic.

We denote the Neron-Severi group of $X$ by $\NS(X)$ and its rank by $\rho(X)$. The number of irreducible components of a divisor $D$ is denoted by $\#D$. The following lemma is due to Fujita \cite[4.16]{Fujita-noncomplete_surfaces}.

\begin{lemma}\label{lem:rulings}
Assume $(\ov X,D)$ is a smooth pair and $\pi:\ov X\to B$ a $\PP^1$-fibration onto a smooth curve. Put $X=\ov X\setminus D$. Let $h$ be the number of horizontal components of $D$ and $\nu$ the number of fibers contained in $D$. Put $\Sigma_X=\sum_{F\not\subseteq D}(\sigma(F)-1)$, where $\sigma(F)$ is the number components of a fiber $F$ not contained in $D$ and where the sum is taken over all fibers not contained in $D$. Then the following relation holds $$\Sigma_X=h+\nu-2+\rho(X)-\#D.$$ \end{lemma}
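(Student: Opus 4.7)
The plan is to compute $\rho(\ov X)$ in two different ways and equate the results; the quantity $\rho(X)$ appearing in the statement is (in the Fujita/Miyanishi convention for an open surface equipped with a fixed snc completion) just the Picard number of $\ov X$. The main structural input is that any $\PP^1$-fibration on a smooth projective surface factors as $\ov X\xrightarrow{\mu}\ov X_0\xrightarrow{\pi_0}B$, where $\pi_0\colon \ov X_0\to B$ is a geometrically ruled surface and $\mu$ is a composition of blow-ups at smooth points of successive total fibers. Since $\rho(\ov X_0)=2$ (generated by the classes of a section and a fiber) and each blow-up raises the Picard number by one while introducing exactly one new component in some fiber, one obtains
$$\rho(\ov X)\;=\;2+\sum_F(n(F)-1),$$
where $n(F)$ denotes the number of irreducible components of a fiber $F$ and the sum runs over all fibers (smooth fibers contribute zero).

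Next I would convert this sum into the invariants of the lemma. For each fiber $F$, let $d(F)$ be the number of its components contained in $D$; thus $d(F)=n(F)$ when $F\subseteq D$, and $n(F)=d(F)+\sigma(F)$ otherwise. Splitting the sum according to these two cases gives
$$\sum_F(n(F)-1)\;=\;\sum_{F\subseteq D}(d(F)-1)\;+\;\sum_{F\not\subseteq D}(d(F)+\sigma(F)-1)\;=\;\sum_F d(F)-\nu+\Sigma_X.$$
Since the vertical irreducible components of $D$ are in bijection with the fiber components lying in $D$, one has $\sum_F d(F)=\#D-h$. Substituting into the previous display yields
$$\rho(\ov X)\;=\;2+\#D-h-\nu+\Sigma_X,$$
which rearranges to the stated identity.

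The argument is essentially bookkeeping once the structural formula for $\rho(\ov X)$ is in place; no step is a serious obstacle. The only point that requires care is the treatment of fibers entirely contained in $D$: such a fiber contributes $d(F)-1$ rather than $d(F)+\sigma(F)-1$ to $\sum_F(n(F)-1)$, producing the $-\nu$ term and matching the definition of $\Sigma_X$, which by design omits precisely those $\nu$ fibers.
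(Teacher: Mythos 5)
The paper gives no proof of this lemma at all --- it is quoted from Fujita \cite[4.16]{Fujita-noncomplete_surfaces} --- so there is no internal argument to compare against; judged on its own, your proof is correct and is the standard one. The only point requiring a decision is the meaning of $\rho(X)$ for the open surface $X$, and your reading $\rho(X)=\rho(\ov X)$ is the right one: it is the only interpretation consistent with how the lemma is applied later (for instance in the proof of \ref{prop:gamma>1}, where $X=\C^2$, $h=3$, $\nu=0$ and the conclusion $\Sigma_S=1$ forces $\rho(S)=\#D=\rho(\ov S)$, and likewise in \ref{lem:Y_geometry}). Both ingredients of your argument --- the factorization of $\pi$ through a $\PP^1$-bundle of Picard number $2$ by blow-ups each adding exactly one fiber component, giving $\rho(\ov X)=2+\sum_F(n(F)-1)$, and the bookkeeping converting $\sum_F(n(F)-1)$ into $\#D-h-\nu+\Sigma_X$ with the $\nu$ fibers contained in $D$ treated separately --- are handled correctly.
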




As a consequence of the Hodge index theorem we have the following lemma.

\begin{lemma}\label{lem:Hodge lemma}Let $C_1,\ldots, C_r$ be distinct irreducible curves on a smooth projective surface $X$. If the matrix $(C_i\cdot C_j)_{i,j\leq r}$ is negative definite then $r< \rho(X)$.
\end{lemma}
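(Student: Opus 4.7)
The plan is to combine linear independence of the classes $[C_i]$ in $\NS(X)\otimes\R$ with the bound on dimensions of negative-definite subspaces coming from the Hodge index theorem.

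First I would verify that $[C_1],\dots,[C_r]$ span a subspace $V\subseteq \NS(X)\otimes\R$ of dimension exactly $r$. Suppose there were a nontrivial real relation $\sum_i a_i [C_i]=0$. Intersecting with $C_j$ for each $j$ gives $\sum_i a_i (C_i\cdot C_j)=0$ for all $j$, which means the vector $(a_1,\dots,a_r)$ lies in the kernel of the intersection matrix $(C_i\cdot C_j)_{i,j\leq r}$. But a negative-definite symmetric matrix is nondegenerate, so this vector must be zero, contradicting our assumption. Hence the classes are linearly independent and $\dim V=r$. Moreover, the intersection form restricted to $V$ is represented in the basis $\{[C_i]\}$ by the matrix $(C_i\cdot C_j)$, which is negative definite by hypothesis.

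Next I would invoke the Hodge index theorem: on a smooth projective surface $X$, the intersection form on $\NS(X)\otimes\R$ has signature $(1,\rho(X)-1)$ (the positive direction is supplied by any ample class, which exists since $X$ is projective). Consequently every subspace on which the intersection form is negative definite has dimension at most $\rho(X)-1$. Applied to $V$, this gives
\[
r=\dim V\leq \rho(X)-1<\rho(X),
\]
which is the desired inequality.

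The only subtle point is ensuring the hypotheses of Hodge index are in force; since $X$ is a smooth projective surface, this is automatic, so no genuine obstacle arises. The argument is short and foundational, and I would present it in the paper essentially as above.
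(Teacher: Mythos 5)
Your argument is correct and is exactly the one the paper intends: the paper states this lemma without proof, introducing it only with the phrase ``as a consequence of the Hodge index theorem,'' and your route (negative definiteness forces linear independence of the classes $[C_i]$, so they span an $r$-dimensional negative definite subspace of $\NS(X)\otimes\R$, which by the signature $(1,\rho(X)-1)$ has dimension at most $\rho(X)-1$) is the standard realization of that remark. No gaps.
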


Recall that in dimension $2$ running the log Minimal Model Program for a smooth pair $(X,D)$ results with a morphism onto a minimal model $\alpha_m\:(X,D)\to (X_m,D_m)$ for which the log surface $(X_m,D_m)$ is log terminal, hence $X_m$ has only quotient singularities. If $\tau\:(X_a,D_a)\to (X_m,D_m)$ is the minimal log resolution then there is a lift $\alpha_a\:(X,D)\to (X_a,D_a)$ of $\alpha_m$. The (smooth) pair $(X_a,D_a)$ is called an \emph{almost minimal model} of $(X,D)$ and the morphism $\alpha_a$ is well described (see \cite[2.3.11]{Miyan-OpenSurf}). If $D$ is snc-minimal and $X\setminus D$ is affine then $X_a\setminus D_a$ is an open subset of $X\setminus D$ with the complement being a disjoint sum of a finite number of curves isomorphic to $\C^1$ and $\tau$ contracts exactly the maximal admissible rational twigs of $D_a$. If $\kappa(K_X+D)\geq 0$ then the divisor $\tau^*(K_{X_m}+D_m)$ is the positive part of the Zariski-Fujita decomposition of $K_{X_a}+D_a$. The negative part can be described very explicitly in terms of a \emph{bark} of a divisor as follows (for details see \cite[\S3]{Fujita-noncomplete_surfaces} and \cite[\S2, section 3]{Miyan-OpenSurf}).

If $R=R_1+\ldots+R_s$ is a linearly ordered admissible chain we put $$e(R)=d(R_2+\ldots +R_s)/d(R).$$ Let $D$ be a reduced snc-minimal divisor with connected support. First assume $D$ is not of quotient type. Let $T_1,\dots, T_s$ be the maximal admissible twigs of $D$. By convention the components of $T_i$ are ordered linearly so that the tip of $T_i$ is its first component (the tip is the component meeting only one other component of $D$). In this case we put $e(D)=\sum_{i=1}^s e(T_i)$ and we define the \emph{bark of $D$} as the unique $\Q$-divisor supported on $\bigcup_i \Supp T_i$ and satisfying $$(K+D-\Bk D)\cdot Z=0$$ for every component $Z$ of $T_1+\ldots+T_s$. Equivalently, $Z\cdot \Bk D$ is $-1$ if $Z$ is a tip of $D$ and $0$ otherwise. One checks that $$\Bk^2 D=-e(D).$$ Now assume $D$ is of quotient type. In this case we define \emph{bark of $D$} as the unique $\Q$-divisor supported on $D$ satisfying $(K+D-\Bk D)\cdot Z=0$ for every component $Z$ of $D$. If $D=D_1+D_2+\ldots+ D_n$ is a chain then $$-\Bk^2(D)=e(D_1+D_2+\ldots+ D_n)+e(D_n+D_{n-1}+\ldots+ D_1).$$ If $D$ is a fork then the formula is a bit more complicated but we will not need it. For a general (reduced snc-minimal) $D$ one simply defines $\Bk D$ by summing barks of connected components of $D$. One shows that $\Bk D$ is an effective $\Q$-divisor with proper fractional coefficients and support equal to the sum of connected components of quotient type and the sum of maximal admissible twigs of the remaining connected components. What is most important for us is that (see (\cite[\S2, section 3]{Miyan-OpenSurf}) if $(X,D)$ is an almost minimal smooth pair with $\kappa(K_X+D)\geq 0$ then $$(K_X+D)^-=\Bk D.$$

The following version of the logarithmic Bogomolov-Miyaoka-Yau inequality follows from \cite{Langer} (for how it follows see \cite[5.2]{Palka-exceptional}). The original, weaker version was proved by Kobayashi-Nakamura-Sakai. The Euler characteristic of a topological space $Z$ will be denoted by $\chi(Z)$.

\begin{lemma}[The log BMY inequality]\label{lem:BMY}
Let $(X,D)$ be a smooth pair and let $D_1,\dots, D_k$ be the connected components of $D$ which are of quotient type. If $(X,D)$ is almost minimal and $\kappa(X\setminus D)\geq 0$ then \[\frac{1}{3}((K_{X}+D)^+)^2\leq \chi(X\setminus D)+\sum_{i=1}^k\frac{1}{|\Gamma(D_i)|},\] \end{lemma}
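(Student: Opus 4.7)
The plan is to reduce the statement to Langer's refined logarithmic Bogomolov--Miyaoka--Yau inequality applied on the log--canonical minimal model of $(X,D)$, and then to carry out the orbifold bookkeeping that turns Langer's abstract orbifold Euler number into the explicit right--hand side of the lemma.

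First I would use the hypothesis $\kappa(K_X+D)\geq 0$ together with almost minimality to invoke the Zariski--Fujita decomposition recalled above: $K_X+D = (K_X+D)^+ + \Bk D$, with $(K_X+D)^+$ nef. Contracting the support of $\Bk D$ produces the minimal model $\alpha_m\:(X,D)\to (X_m,D_m)$, where $X_m$ carries only quotient singularities, $K_{X_m}+D_m$ is nef, and $\alpha_m^*(K_{X_m}+D_m)=(K_X+D)^+$. In particular $((K_X+D)^+)^2 = (K_{X_m}+D_m)^2$, so the inequality to be proved becomes an assertion on $(X_m,D_m)$.

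Next I would apply the orbifold BMY inequality from \cite{Langer} to the nef divisor $K_{X_m}+D_m$ on the surface $X_m$ with quotient singularities; its conclusion has the shape
\[ (K_{X_m}+D_m)^2 \leq 3\, e_{\mathrm{orb}}(X_m,D_m), \]
where $e_{\mathrm{orb}}$ is an orbifold Euler characteristic aggregating local contributions from all the quotient singularities of $X_m$. The remaining task is to identify $e_{\mathrm{orb}}(X_m,D_m)$ with $\chi(X\setminus D)+\sum_{i=1}^k 1/|\Gamma(D_i)|$. The quotient singularities of $X_m$ split into two classes: those coming from contracted maximal admissible twigs attached to non--quotient--type connected components of $D$, which lie on $D_m$ and whose local contributions are absorbed by the topological count of the boundary; and those coming from the contraction of the entirely quotient--type connected components $D_1,\dots,D_k$, which lie off $D_m$ and each contribute the correction $1/|\Gamma(D_i)|$. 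Because each such $D_i$ is a contractible rational tree ($\chi(D_i)=1$), the topological Euler characteristic is preserved under the contraction, giving $\chi(X_m\setminus D_m)=\chi(X\setminus D)$, and combining everything yields the asserted bound.

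The deep input here, which I would quote as a black box, is Langer's orbifold BMY estimate itself for surfaces with arbitrary quotient singularities; the main obstacle on our side is not that estimate but the accurate accounting that converts Langer's $e_{\mathrm{orb}}$ into the explicit right--hand side of the lemma. That accounting relies on Kobayashi's formula for the orbifold Euler number together with the formulas for discriminants, barks, and the structure of quotient configurations recalled in \S\ref{ssec:generalities}, and a detailed execution of this reduction is precisely what is carried out in \cite[5.2]{Palka-exceptional}, which I would ultimately cite as the source of the particular form stated here.
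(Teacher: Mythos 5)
Your proposal is consistent with the paper's treatment: the paper gives no proof of this lemma, stating only that it follows from Langer's orbifold BMY inequality with the derivation carried out in \cite[5.2]{Palka-exceptional}, and your sketch is precisely that standard reduction (pass to the minimal model, apply Langer, translate the orbifold Euler number). One small bookkeeping caveat: contracting a quotient-type component $D_i$ adds the point $q_i$ to the open part, so literally $\chi(X_m\setminus D_m)=\chi(X\setminus D)+k$; the correct identity is recovered only after each $q_i$ is weighted by $-\bigl(1-1/|\Gamma(D_i)|\bigr)$ in the orbifold Euler number, which your final formula implicitly does.
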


\subsection{The log resolution}\label{ssec:resol}

\begin{notation}\label{not:setup} Let $S=\C^2=\Spec \C[x,y]$. Let $U$ be a $\C^*$ embedded as a closed subset of $S$. Having coordinates $(x,y)$ on $\C^2$ we have an identification $\C^2=\PP^2\setminus L_\8$, where $L_\8$ is a line on $\PP^2$. We denote by $\bar U$ the closure of $U$ in $\PP^2$. Let $\lambda$ and $\tilde \lambda$ denote the branches of $\bar U$ at infinity i.e.\ the germs of $\bar U$ at $\bar U\cap L_\infty$. Note that an automorphism $\alpha$ of $\C^2$ gives rise to new coordinates $(\alpha^*x,\alpha^*y)$ on $\C^2$. It is proved in \cite{KR-Some_properties_of_Cstar} that if $U$ does not admit a good asymptote then there is a choice of coordinates on $\mathbb{C}^2$, such that $\lambda$ and $\tilde\lambda$ are disjoint. From now on throughout  the paper we assume that it is the case.

Let $$\Phi\colon (\ov{S}',D'+E')\rightarrow (\mathbb{P}^2,L_\8+\bar U),$$ where $D'$ is the reduced total transform of $L_{\infty}$ and $E'$ is the proper transform of $\bar U$, be the minimal log resolution of singularities. By definition $\Phi^{-1}$ is the minimal sequence of blow-ups such that $D'+E'$ is an snc divisor. Let $L_\8'\subseteq D'$ be the proper transform of $L_\8$ in $\ov S'$ (see Fig. \ref{fig:Sbarprim}).
\begin{figure}[h] \includegraphics[scale=0.53]{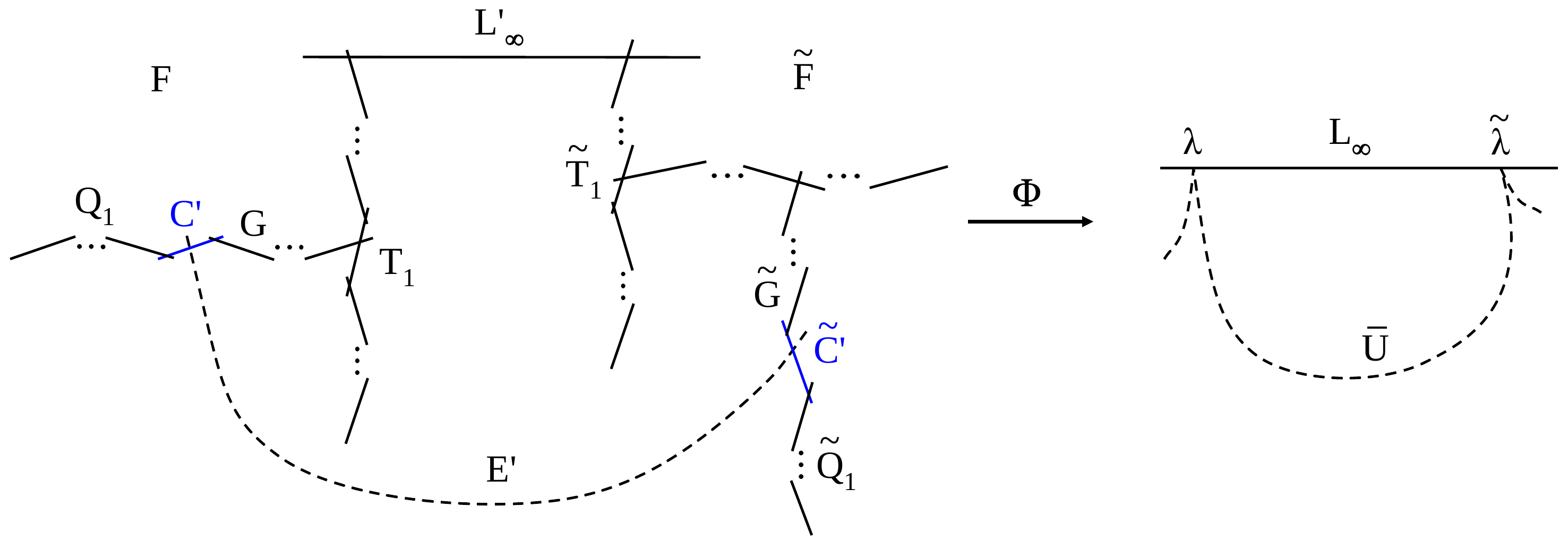}\caption{The log resolution $\Phi\: (\ov S',D'+E')\to (\PP^2,L_\8+\bar U)$.}  \label{fig:Sbarprim}\end{figure}

\noin It may happen that $L_\8'$ is a $(-1)$-curve. Let $$\Psi\: (\ov{S}',D'+E')\to (\ov{S},D+E),$$ where $D=\Psi_*D'$ and $E=\Psi_*E'$, be the {\it snc minimalization of the divisor $D'$ with respect to $E'$}, i.e., $\Psi$ is the identity if $L_{\infty}'$ is not a $(-1)$-curve and otherwise it is the composition of successive contractions of $L_{\infty}'$ and then possibly of other $(-1)$-curves in the successive images of $D'$, such that $D+E$ is an snc divisor and each $(-1)$-curve of $D$ is a branching component of $D+E$. Now the only case when $D+E$ is not snc-minimal is when $E^2=-1$. Of course, $\ov{S}\setminus D=\ov S'\setminus D'=S=\C^2$ and $E'\cdot D'=E\cdot D=2$. Put $\gamma=-E^2$, $\gamma'=-(E')^2$ and define $\varepsilon$ by the equality $$(\ks+D+E)^2=2-\varepsilon.$$
\end{notation}

The assumption that a good asymptote for $U\subseteq S$ does not exist can be restated as follows.

\begin{lemma}\label{lem:no asymptote}
There is no curve $L\subset \ov S$ such that $L\cap S\cong \C^1$ and $L\cdot E\leq 1$. \end{lemma}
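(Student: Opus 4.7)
I propose to argue by contradiction. Suppose an irreducible curve $L\subset \ov S$ satisfies $L\cap S\cong \C^1$ and $L\cdot E\leq 1$. Set $A:=L\cap S$. Since $L\cap S$ is nonempty, $L$ is not a component of $D$, so $A$ is a closed curve in $S=\C^2$ isomorphic to the affine line $\C^1$, hence a candidate for a good asymptote of $U$ in the sense of Definition \ref{asymptote}.

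The core step is to compare the two intersection numbers $A\cdot U$ (computed in $\C^2$) and $L\cdot E$ (computed on $\ov S$). Since $\ov S$ is smooth and $\ov S\setminus D=S=\C^2$, on the open set $S\subset \ov S$ the curves $L$ and $E$ restrict to $A$ and $U$, and local intersection multiplicities at points of $S$ agree with those computed in $\C^2$. Note that $L\neq E$, because $A\cong\C^1\not\cong\C^*\cong U$; since $E$ is irreducible (the proper transform of the irreducible curve $\bar U$), $L$ and $E$ share no common component. Therefore $L\cdot E=\sum_{p\in L\cap E}(L\cdot E)_p$ with all summands nonnegative, and splitting this sum according to whether $p\in S$ or $p\in D$ yields
\[ L\cdot E \;=\; A\cdot U \;+\sum_{p\in L\cap E\cap D}(L\cdot E)_p \;\geq\; A\cdot U. \]
Combined with the hypothesis $L\cdot E\leq 1$, this forces $A\cdot U\leq 1$, so $A$ is a good asymptote of $U$, contradicting the standing assumption in Notation \ref{not:setup} that the embedding $U\subset \C^2$ is sporadic.

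There is no genuine obstacle here: the lemma is essentially a translation of the sporadicity assumption into the language of the model $(\ov S,D,E)$. The only point requiring care is that intersection multiplicities at points of the open piece $S$ genuinely match their counterparts in $\C^2$, which follows immediately from the smoothness of $\ov S$ along $S$ and the canonical identification $\ov S\setminus D=\C^2$ already built into Notation \ref{not:setup}.
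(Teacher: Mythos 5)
Your argument is correct and is exactly the routine verification the paper leaves implicit: the paper states this lemma with no proof, presenting it as a direct restatement of the sporadicity assumption, and your comparison $A\cdot U\leq L\cdot E\leq 1$ via nonnegativity of the local intersection multiplicities at boundary points is the intended justification.
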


\subsection{The Hamburger-Noether pairs}\label{ssec:HN}

We write $D'=L'_\infty+F+\tilde F$ where $F=\Phi^{-1}(\lambda\cap L_\infty)_{red}$ and $\tilde F=\Phi^{-1}(\tilde\lambda\cap L_\infty)_{red}$. Let $C',\ti C'$ be the components of $D'$ meeting $E'$ contained in $L_\8'\cup F$ and $L_\8'\cup \tilde F$ respectively. Note that it may happen that $F=0$ (or $\ti F=0$). This means that the branch $\lambda$ crosses $L_\8$ normally, i.e.\ it is smooth and transversal to $L_\8$. We call such a branch \emph{simple}. The resolution process $\Phi$, see \ref{not:setup}, can be described in terms of Hamburger-Noether (HN-) pairs. For details we refer to \cite[Appendix]{KR-C*_actions_on_C3} or \cite{Russell_HN_pairs}. By $(T\cdot Z)_p$ we denote the local intersection index of two curves $T,Z$ at a point $p$.

We describe the process for $\lambda$, for $\ti \lambda$ it is analogous. As an input data we have a locally analytically irreducible branch $(\lambda_1,q_1)=(\lambda,\lambda\cap \bar U)$ and a curve $L_\8$ smooth at $q_1$.  Let $x_1$ be a coordinate defining $L_{x_1}=L_\8$ at $q_1$. Put $$c_1=(\lambda_1\cdot L_{x_1})_{q_1}.$$ If $c_1=1$ (equivalently, if $\lambda$ and $L_\8$ cross normally at $q_1$) we put $p_1=0$, $h=0$ and we do nothing. Assume $c_1>1$.  We then pick $y_1$ so that $(x_1, y_1)$ is a system of parameters at $q_1$ and $$p_1 = (\lambda_1\cdot L_{y_1})_{q_1}$$ is the multiplicity of $\lambda_1 \ \ \text {at} \ \ q_1.$ This forces  $c_1\geq p_1$. We blow up successively over $q_1$ until the proper transform $\lambda_2 $ of $\lambda_1$ meets the inverse image of the divisor $L_{x_1}+L_{y_1}$ not in a node. The exceptional curves form a chain  called {\it {the chain produced by the pair}} $\binom{c_1}{p_1}$. Let $C_1$ be the last exceptional curve. We then say that $C_1$ is the {\it {exceptional curve produced by the pair}} $\binom{c_1}{p_1}$. Since $\lambda_1$ is locally analytically irreducible, $\lambda_2$ meets $C_1$ in a unique point $q_2$ and does not meet any other component of the inverse image of $L_{x_1}+L_{y_1}$. We choose for $x_2$ a local coordinate for $C_1$ at $q_2$ and continue the process, noting that $c_2= \lambda_2 \cdot C_1 =\gcd(c_1, p_1)$. We continue this process until the proper transform of $\lambda_1$ meets the last exceptional curve normally. This describes the log resolution of $(\PP^2,\lambda_1 \cup L_{x_1})$ at $q_1$ and $$\binom{c_1}{p_1},\binom{c_2}{p_2},\dots ,\binom{c_h}{p_h}$$ are called the \emph{HN-pairs of $(\lambda_1,L_{x_1})$}. We note that $$c_{i+1}=\gcd(c_i,p_i)\text{\ for\ } i=1,\ldots,h-1 \text{\ and \ } \gcd(c_h,p_h)=1.$$

After a slight change of numbering we may write the sequences of HN-pairs of $\lambda$ and $\tilde\lambda$ as
\begin{equation}\binom{c_1}{c_1}_j,\binom{c_1}{p_1},\dots,\binom{c_h}{p_h}\text{\ \ and\ \ } \binom{\ti c_1}{\ti c_1}_{\ti j},\binom{\tilde c_1}{\tilde p_1},\dots, \binom{\tilde c_{\tilde h}}{\tilde p_{\tilde h}},\label{eq:def_of_pairs}\end{equation} where $p_1<c_1$ and $\ti p_1<\ti c_1$ and by $\binom{c}{c}_j$ we mean a sequence of pairs $\binom{c}{c},\ldots,\binom{c}{c}$ of length $j$. Moreover, interchanging the names $\lambda$ and $\ti \lambda$ if necessary, we may, and shall, assume that $\ti j\geq j$. As a consequence of the minimality of the resolution we have  $p_h<c_h$ and $\ti p_{\ti h}<\ti c_{\ti h}$.

\bsrem  The HN-pairs for $U\subseteq \C^2$ depend in general on the choice of coordinates on $\C^2$. We remark that to each HN-pair there is tacitly associated a complex number that determines the location of the branch on the last exceptional curve produced by the blowups prescribed by the pair; we will not make use of it.\esrem

Let $\mu_1,\mu_2,\ldots$  (resp. $\tilde \mu_1,\tilde \mu_2,\ldots$) be the sequence of multiplicities of all  singular
points of $\lambda $ infinitely near $\lambda\cap L_\infty$ (resp.  of $\tilde\lambda$ infinitely near  $\tilde\lambda\cap L_\infty$). Then (\cite[Appendix]{KR-C*_actions_on_C3})
\begin{align*} \sum_{i \geq  1}\mu_i&=(j+1) c_1+p_1+p_2+\cdots +p_h-1.\\
\sum_{i \geq  1}\mu_i^2&=jc_1^2+c_1p_1+c_2p_2+\cdots +c_hp_h.\\
\sum_{i \geq  1}\tilde \mu_i&=(\ti j+1)\tilde c_1+\tilde p_1+\tilde p_2+\cdots +\tilde p_{\tilde h}-1.\\
\sum_{i \geq  1}\tilde\mu_i^2&=\ti j \ti c_1^2+\tilde c_1\tilde p_1+\tilde c_2\tilde p_2+\cdots + \tilde c_{\tilde h}\tilde p_{\tilde h}.\end{align*}

\begin{lemma}Let $d$ be the degree of $\bar U\subseteq \PP^2$. The following equations hold:
\begin{align} d&=c_1+\ti c_1,\\
2d+\gamma'&=jc_1+\sum_{i=1}^h p_i+\ti j \ti c_1+\sum_{i=1}^{\ti h} \ti p_i,\label{eq:1}\\
d^2+\gamma'&=jc_1^2+\sum_{i=1}^h c_ip_i+\ti j \ti c_1^2+\sum_{i=1}^{\ti h} \ti c_i\ti p_i,\label{eq:2}\\
 h+j+\ti h+\ti j&=\gamma'+\varepsilon+2.\label{eq:h+hB}\end{align}
\end{lemma}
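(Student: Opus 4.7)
The first equation is immediate from B\'ezout: $\bar U\cdot L_\8 = d$, and since $\lambda,\ti\lambda$ are disjoint by assumption this decomposes as $c_1+\ti c_1$ at the two points of $\bar U\cap L_\8$. For \eqref{eq:2} and \eqref{eq:1} I would use two standard blowup bookkeeping identities: a blowup at a point of multiplicity $\mu$ on the proper transform of $\bar U$ decreases its self-intersection by $\mu^2$ and increases its intersection with the canonical class by $\mu$. Summing over all blowups of $\Phi$ yields
\[
(E')^2 = d^2 - \sum_i \mu_i^2 - \sum_j \ti\mu_j^2,\qquad K_{\ov S'}\cdot E' = -3d + \sum_i \mu_i + \sum_j \ti\mu_j.
\]
Using $(E')^2=-\gamma'$ and substituting the formulas above for $\sum\mu_i^2$ and $\sum\ti\mu_j^2$ gives \eqref{eq:2}. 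For \eqref{eq:1}, note that $E'\cong\PP^1$ (since $U\cong\C^*$ and $E'\cdot D'=2$), so adjunction yields $K_{\ov S'}\cdot E' = -2-(E')^2 = \gamma'-2$; combining this with the formulas for $\sum\mu_i$ and $\sum\ti\mu_j$ and with $d=c_1+\ti c_1$ gives \eqref{eq:1}.

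The main content is \eqref{eq:h+hB}. My plan is to compute $(K_{\ov S'}+D'+E')^2$ in two different ways. First I claim it equals $(K_{\ov S}+D+E)^2 = 2-\varepsilon$. Indeed, $\Psi$ contracts only $(-1)$-curves of $D'$ which are non-branching in $D'+E'$; by minimality of $\Phi$ every $\Phi$-exceptional $(-1)$-curve of $D'$ is branching, so the only initial candidate is $L'_\8$, which has exactly two neighbours (one from each side). The key structural observation is that $C'$ and $\ti C'$ are themselves branching in $D'+E'$ (each meets $E'$ and has at least two neighbours in its exceptional tree), so the cascade of contractions triggered by $L'_\8$ propagates only along chains bounded by $C'$ and $\ti C'$, each contraction having exactly two neighbours. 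Such a contraction is inverse to blowing up a node of $D+E$, for which one computes directly $K+(D+E)_{\mathrm{new}} = \pi^*(K+D+E)$; hence $(K+D+E)^2$ is preserved. On the other hand, since $D'$ is a tree of rational curves, $(K_{\ov S'}+D')\cdot D' = -2$; combining this with $E'\cdot D'=2$, $(E')^2=-\gamma'$ and $K_{\ov S'}\cdot E'=\gamma'-2$ and expanding directly gives $(K_{\ov S'}+D'+E')^2 = K_{\ov S'}^2 - 4 + \gamma' - (D')^2$. Noether's formula yields $K_{\ov S'}^2 = 9-N$ where $N$ is the total number of blowups of $\Phi$, and a short induction shows that under a free blowup the reduced total transform evolves as $D_{\mathrm{new}} = \pi^*D$, hence $D^2$ is preserved, while under a satellite blowup $D_{\mathrm{new}} = \pi^*D - F$, so $D^2$ drops by $1$. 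Starting from $L_\8^2 = 1$ this gives $(D')^2 = 1-s$ where $s$ is the number of satellite blowups, and combining the two computations yields $t := N-s = 2 + \gamma' + \varepsilon$.

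Identifying $t$ with $h+j+\ti h+\ti j$ is the step I regard as the main bookkeeping obstacle. The Hamburger-Noether process partitions $\Phi$ into stages, one per HN-pair. A trivial pair $\binom{c}{c}$ uses a single blowup, which is free, since $p=c=\mathrm{mult}$ means the current curve is not tangent to the branch. For a non-trivial pair $\binom{c}{p}$ with $c>p$ the current curve is tangent to the branch, so after the (free) first blowup the proper transform of the branch lands at the node where the new exceptional meets the proper transform of the previous curve, and inductively every subsequent blowup of the stage is at a node, i.e.\ satellite. The transition between consecutive stages is free because a stage terminates precisely when the proper transform of the branch hits the current total transform at a smooth (non-node) point. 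Hence each HN-pair contributes exactly one free blowup, so $t = h+j+\ti h+\ti j$, completing the proof of \eqref{eq:h+hB}.
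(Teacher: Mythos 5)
Your proof is correct and follows essentially the same route as the paper's: B\'ezout for the first equation, the standard blowup bookkeeping for $K\cdot E'$ and $(E')^2$ combined with the HN-formulas for $\sum\mu_i$ and $\sum\mu_i^2$ for the second and third, and, for the fourth, the invariance of $(K+D+E)^2$ under the inner contractions of $\Psi$ together with the count of exactly one free blowup per HN-pair. The only cosmetic difference is in the last equation, where the paper tracks $K\cdot(K+D)$ (which drops by one per outer blowup, giving $K_{\ov S'}\cdot(K_{\ov S'}+D')=6-(h+j+\ti h+\ti j)$ directly) instead of computing $K_{\ov S'}^2$ and $(D')^2$ separately via Noether's formula and the satellite count --- the two computations are equivalent.
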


\begin{proof} By the definition of $c_1, \ti c_1$ we have  $d=L_\8\cdot \bar U=L_\8\cdot(\lambda+\ti \lambda)=c_1+\ti c_1$. Tracking the self-intersection and intersections of the proper transforms of $\bar U$ with canonical divisors under blowups constituting the resolution $\Phi$ we get $K_{\ov S'}\cdot E'-K_{\PP^2}\cdot \bar U=\ds\sum_{i \geq  1}\mu_i+\ds\sum_{i \geq  1}\ti \mu_i$ and $\bar U^2-(E')^2=\ds\sum_{i \geq  1}\mu_i^2+\ds\sum_{i \geq  1}\ti \mu_i^2$. We compute $K_{\ov S'}\cdot E'-K_{\PP^2}\cdot \bar U=\gamma'-2+3d$ and $\bar U^2-(E')^2=d^2+\gamma'$, where $\gamma'=-(E')^2$. Using the first equation and the equations preceding the lemma we obtain the second and third equation.

The contractions in $\Psi$ are inner for $D'+E'$, so $\ks\cdot(\ks+D+E)=K_{\ov S'}\cdot (K_{\ov S'}+D'+E')=K_{\ov S'}\cdot E'+K_{\PP^2}\cdot(K_{\PP^2}+L_\8)-(h+j+\ti h+\ti j)=\gamma'+4-(h+j+\ti h+\ti j)$. On the other hand, since the arithmetic genus of $D+E$ vanishes, we have $\ks\cdot(\ks+D+E)=(\ks +D+E)^2=2-\varepsilon$. This gives the fourth equation.
\end{proof}

Recall that we  assume that the branches at infinity of the closure of $U$ are separated. Note that if we blow according to a HN-pair $\binom{c}{p}$ then after making the first blowup, the branch either 'stays' on a given irreducible component of the boundary in case $c=p$, i.e.\ its proper transform meets the proper transform of the component, or it 'jumps', i.e.\ it separates from it in case $c>p$. Hence the following holds.

\blem  The pair $(j,\ti j)$ defined above is the type of $U$ at infinity in the sense of \ref{def:type}.
\elem

\brem\label{rem:capacity_using_pairs}  It is an elementary exercise to show that the maximal twig of $D+E'$ created by a pair $\binom{c_i}{p_i}$ has discriminant $c_i/\gcd(c_i,p_i)$ and its contribution to $e(D+E')$ is $(c_i-p_i)/c_i$.
\erem

\section{Basic results}

\subsection{Basic inequalities} We use the notation from the previous section.

\begin{lemma}\label{lem:eps_and_gamma} We have:
	 \begin{enumerate}[(i)]

    \item $\ks\cdot (\ks+D+E)=2-\varepsilon,$

	\item $\varepsilon\geq 0$,

    \item $\gamma\geq 1$.

    \item If $\gamma\neq 1$ then $(\ov S,D+E)$ is almost minimal and $(\ks+D+E)^-=\Bk {(D+E)}$.

    \item $-\Bk^2(D+E)=e(D+E)\leq 1+\varepsilon$.
\end{enumerate}
\end{lemma}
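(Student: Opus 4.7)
For (i), observe that $D+E$ is a connected snc divisor with all components smooth rational. The divisor $D$ is a tree: the reduced total transform of $L_\8$ under $\Phi$ is obtained from $L_\8$ by blow-ups that either add a leaf or subdivide an edge of the dual graph, and the contractions in $\Psi$ only involve non-branching $(-1)$-curves of $D$, which also preserves the tree property. Since the branches $\lambda, \ti\lambda$ at infinity are separated, $E$ meets $D$ transversally at two distinct points lying on two distinct components $C, \ti C$, contributing two edges that close a unique cycle in the dual graph of $D+E$. Hence $p_a(D+E)=1$, so $(\ks+D+E)\cdot(D+E)=0$ by adjunction, and (i) follows immediately from the definition $(\ks+D+E)^2=2-\varepsilon$.

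For (ii), the derivation of \eqref{eq:h+hB} in the preceding lemma already yields $\varepsilon=h+j+\ti h+\ti j-\gamma'-2$, and substituting \eqref{eq:1} together with $d=c_1+\ti c_1$ rewrites the required inequality $\varepsilon\geq 0$ as
\[h+\ti h+2c_1+2\ti c_1\geq j(c_1-1)+\ti j(\ti c_1-1)+\sum_{i=1}^h p_i+\sum_{i=1}^{\ti h}\ti p_i+2.\]
I would verify this by induction on $h+\ti h$, exploiting $1\leq p_i\leq c_i-1$, $c_{i+1}\mid\gcd(c_i,p_i)$, and the fact that each HN-pair produces at least one blow-up. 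The contributions of the non-jumping initial pairs $\binom{c_1}{c_1}$ (which contribute $j$ to the left side but $j(c_1-1)$ to the right) are absorbed by the $2c_1+2\ti c_1$ term coming from $2d$. The main obstacle is the careful pair-by-pair bookkeeping.

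For (iii), assume toward contradiction $\gamma\leq 0$, so $E^2\geq 0$ with $E\cong\PP^1$, hence $E$ is nef. Riemann-Roch on the rational surface $\ov S$ gives $h^0(\mathcal{O}_{\ov S}(E))\geq 2+E^2\geq 2$, so $|E|$ contains a pencil; when $E^2=0$ this pencil is base-point-free, inducing a $\PP^1$-fibration $\pi\:\ov S\to\PP^1$ with $E$ as a fiber and general fibers $F$ satisfying $F\cdot D=E\cdot D=2$ and $F\cdot E=0$. Since $\chi(S\setminus U)=\chi(\C^2)-\chi(\C^*)=1$ and $S$ is affine, Lemma \ref{lem:rulings} forces a reducible fiber containing an irreducible component $L\not\subset D+E$ with $L\cdot D=1$; such $L$ restricts to $\C^1$ in $S$ with $L\cdot E\leq 1$, contradicting Lemma \ref{lem:no asymptote}. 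The case $E^2>0$ is reduced to $E^2=0$ by blowing up a general point of $E$ and tracking the boundary.

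For (iv), the assumption $\gamma\neq 1$ means $E^2\neq -1$, so $D+E$ is snc-minimal by the construction of $\Psi$ (every non-branching $(-1)$-curve of $D$ was contracted, and $E$ itself is not a $(-1)$-curve). Lemma \ref{lem:no asymptote} further implies that no $(-1)$-curve $L\not\subset D+E$ satisfies $L\cdot(D+E)\leq 1$, since such $L$ would give $L\cap S\cong\C^1$ with $L\cdot U\leq 1$. Together these two properties characterize almost minimality by \cite[2.3.11]{Miyan-OpenSurf}, after which the standard theory gives $(\ks+D+E)^-=\Bk(D+E)$. For (v), because $D+E$ is connected with a single cycle it is not of quotient type, so the summation in Lemma \ref{lem:BMY} vanishes and yields $((\ks+D+E)^+)^2\leq 3\chi(S\setminus U)=3$. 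Combined with $((\ks+D+E)^+)^2+((\ks+D+E)^-)^2=(\ks+D+E)^2=2-\varepsilon$ and $((\ks+D+E)^-)^2=-e(D+E)$, we obtain $e(D+E)\leq 1+\varepsilon$ as desired.
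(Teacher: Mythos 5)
Your parts (i) and (iv) follow the paper's own route, and the strategy in (iii) (a $\PP^1$-fibration with $E$ as a fiber forcing a fiber component that is a good asymptote) is the paper's as well, though your appeal to Lemma \ref{lem:rulings} to produce a component $L$ with $L\cap S\cong \C^1$ is asserted rather than proved; the paper gets this more directly from $\chi(\C^2)=1$, which forces some component of a special fiber of the induced $\C^*$-fibration of $\C^2$ to have positive Euler characteristic, hence to be a $\C^1$. The serious problem is (ii). You reduce $\varepsilon\geq 0$ via \eqref{eq:h+hB} and \eqref{eq:1} to the displayed numerical inequality and propose to prove it ``by induction on $h+\ti h$'' using $1\leq p_i\leq c_i-1$ and elementary properties of the pairs. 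This step is not carried out, and as set up it would fail: the inequality is false for abstract admissible HN-data satisfying exactly the constraints you list. For example, take $j=\ti j=0$, $\ti h=1$, $(\ti c_1,\ti p_1)=(2,1)$, and for $\lambda$ the sequence $\binom{4}{2},\binom{2}{2}_r,\binom{2}{1}$; then your inequality reads $r+15\geq 2r+6$ and fails for $r\geq 10$. (Note also that $p_i\leq c_i-1$ is wrong for intermediate pairs: the paper explicitly allows $p_i=c_i$ there, and such pairs each add $1$ to the left side but $p_i\geq 2$ to the right.) What excludes such data is geometry, not bookkeeping: the paper proves (ii) by observing that $D+E$ contains a loop, hence $|\ks+D+E|\neq\emptyset$ and $\kappa(\ks+D+E)\geq 0$, then applying the Miyaoka--Langer inequality $(\ks+D+E)^2\leq 3\chi(S\setminus E)+\frac{1}{4}\cal N^2$ to get $\varepsilon\geq -1-\frac14\cal N^2$, and finally using the non-existence of a good asymptote to show $\Phi\neq\id$ and that some twig tip lies in $\Supp\cal N$, so $\cal N\neq 0$ and $\varepsilon>-1$. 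Some such geometric input is indispensable for (ii).

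There is a second, smaller gap in (v): you apply Lemma \ref{lem:BMY} to $(\ov S,D+E)$, which is legitimate only when this pair is almost minimal, i.e.\ by (iv) only when $\gamma\neq 1$. When $\gamma=1$ the curve $E$ is a non-branching $(-1)$-curve of $D+E$, so $D+E$ is not snc-minimal and the pair is not almost minimal. The paper covers this case by noting that the snc-minimalization of $D+E$ does not touch its maximal twigs and applying \ref{lem:BMY} to the resulting almost minimal model; you need to add this (or an equivalent) step. You should also say a word about the hypothesis $\kappa\geq 0$ of \ref{lem:BMY}, which the paper secures via the loop in $D+E$.
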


\begin{proof} (i) $\ks\cdot (\ks+D+E)=(\ks+D+E)^2-2(p_a(D+E)-1)=(\ks+D+E)^2$.

(ii) Note that the support of $D+E$ contains a loop, so $|\ks+D+E|\neq \emptyset$ and hence $\kappa(\ks+D+E)\geq 0$ (see \cite[1.2.9]{Miyan-OpenSurf}). Let $\cal N$ be the negative part in the Zariski decomposition of the divisor $\ks+D+E$. By \cite{Miyaoka} (or by \cite[Corollary 5.2]{Langer}), $(\ks +D+E)^2\leq 3\chi(S\setminus E)+\frac{1}{4}\cal N^2=3+\frac{1}{4}\cal N^2$, so $\varepsilon\geq -1-\frac{1}{4}\cal N^2$. Since $U$ has no good asymptote, $\bar U$ is not a conic, hence $\Phi\neq \id$. The resolution $\Phi\colon (\ov{S}',D'+E')\rightarrow (\mathbb{P}^2,L_\8+\bar U)$ is minimal, so the last $(-1)$-curve produced by $\Phi$ is not a tip of $D'+E'$, hence it meets some twig of $D'+E'$. Let $W$ be a tip of one of these twigs. $W$ is not touched by $\Psi$ so, because $W^2<0$, it is contained in $\Supp \cal N$. Thus $\cal N\neq 0$ and we get $\varepsilon>-1$.

(iii) Suppose $\gamma\leq 0$. After blowing up over one of the points in $E\cap D$ we may assume that $E^2=0$. Then $U$ is a fiber of a $\C^*$-fibration of $\C^2$. The fibration is trivial over some Zariski open subset of the base, and hence the Euler characteristic of the total space over this subset vanishes. Thus, if $F\subseteq \C^2$ is the sum of the remaining fibers then $\chi(F)=\chi(\C^2)=1$. It follows that $F$ contains an irreducible (smooth) component with positive Euler characteristic. Since $\C^2$ contains no complete curves, it is necessarily $\C^1$. This is a good asymptote of $U$ and we reach a contradiction.

(iv) If $\gamma\neq 1$ then $D+E$ is snc-minimal. If $(\ov S,D+E)$ is not almost minimal then, since $D+E$ is connected and not negative definite, \cite[\S2.3]{Miyan-OpenSurf} implies that there exists a $\C^1$ contained in $S\setminus E$ witnessing the non-minimality. The latter is impossible by \ref{lem:no asymptote}.

(v) Suppose $\gamma\neq 1$. Let $\cal P$ be the positive part of the Zariski decomposition of $\ks+D+E$. By (iv) we compute $\cal P^2=2+e(D+E)-\varepsilon$. Since $\chi(S\setminus E)=1$, \ref{lem:BMY} gives (v). In case $\gamma=1$ we note that the snc-minimalization of $D+E$ does not touch the maximal twigs of $D+E$, so we get (v) by applying \ref{lem:BMY} to the resulting minimal model.
\end{proof}

Define $t_\lambda\in\{0,1\}$ by $t_{\lambda}=1$ if $p_h=1$ and $h>0$ and $t_{\lambda}=0$ otherwise. Define $t_{\ti \lambda}\in \{0,1\}$ analogously for $\ti \lambda$. Let \begin{equation}t=t_\lambda+t_{\ti \lambda}\in \{0,1,2\}\label{eq:def_of_t}.\end{equation}
We see easily that $t_{\lambda}=1$ if and only if $C$ is a $(-1)$-curve and $C$ together with some $(-2)$-twig of $D$ is contained in a twig of $D$. An analogous statement holds for $t_{\ti \lambda}$. The following inequality is proved in \cite[2.5]{KR-Some_properties_of_Cstar} as a consequence of the non-existence of a good asymptote.

\begin{proposition}\label{prop:basic_inequality}
$$2\varepsilon +\gamma\leq 7+t$$ \end{proposition}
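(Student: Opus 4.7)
The plan combines the logarithmic BMY inequality of Lemma \ref{lem:eps_and_gamma}(v) with the Hamburger-Noether equation \eqref{eq:h+hB} and the non-existence of a good asymptote (Lemma \ref{lem:no asymptote}). Since $\Psi$ contracts only components of $D'$, we have $E^2\geq(E')^2$ and hence $\gamma\leq\gamma'$. Using \eqref{eq:h+hB}, the target is equivalent to
\[
\varepsilon+h+j+\tilde h+\tilde j\leq 9+t,
\]
because $2\varepsilon+\gamma\leq 2\varepsilon+\gamma'=\varepsilon+(h+j+\tilde h+\tilde j)-2$. The BMY bound yields $e(D+E)\leq 1+\varepsilon$, so the problem reduces to supplying a sufficiently strong lower bound on $e(D+E)$ in terms of the HN-pair data.

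Such a lower bound should come from the HN-pair decomposition of $D+E'$ via Remark \ref{rem:capacity_using_pairs}. Each jumping pair $\binom{c_i}{p_i}$ with $c_i>p_i$ creates a maximal twig of $D+E'$ contributing $(c_i-p_i)/c_i$ to $e(D+E')$, and the final pair gives the near-maximal contribution $(c_h-1)/c_h\geq \tfrac{1}{2}$ precisely when $p_h=1$—exactly the case counted by $t_\lambda=1$. The stay pairs $\binom{c_1}{c_1}$ contribute $0$ directly but create $(-2)$-chains in $D$ whose interaction with subsequent jumps further enriches the bark. Passing from $e(D+E')$ to $e(D+E)$ introduces only controlled corrections from the possible contraction of $L_\infty'$ by $\Psi$, relevant only when some branch is simple (a case one handles separately). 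Combining these observations one aims for an estimate of the form $e(D+E)\geq\tfrac{1}{2}(h+j+\tilde h+\tilde j)-\tfrac{1}{2}t-O(1)$, which together with the BMY bound yields the target.

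The non-existence of a good asymptote is essential for ruling out the configurations where this lower bound would fail, e.g.\ when the jumping pairs all have small contribution $(c_i-p_i)/c_i$, or when stay pairs accumulate without being accompanied by enough jump twigs. In each such pathological case, one should exhibit a rational curve $L\subset\ov S$ with $L\cap S\cong\C^1$ and $L\cdot E\leq 1$—typically a tangent line to $\lambda$ or $\tilde\lambda$ at infinity, or a component of $D$ that descends to an affine line after contraction of some $(-1)$-curves—contradicting Lemma \ref{lem:no asymptote}. The main obstacle I foresee is precisely the HN-based lower bound on $e(D+E)$: the stay pairs contribute nothing to $e(D+E')$ directly, so one must delicately use the combinatorics of the subsequent jumps (or the sporadic hypothesis) to recover enough bark; and extracting a correction term exactly equal to $t$ (rather than something larger) hinges on the sharp characterization of the $p_h=1$ cases as those with near-maximal twig contribution, together with a careful use of sporadicity to exclude degenerate HN-pair configurations.
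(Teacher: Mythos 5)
There is a genuine gap, and it is directional. First, note that the paper does not prove this proposition itself: it cites \cite[2.5]{KR-Some_properties_of_Cstar}, so the comparison can only be with the mechanism that reference (and the analogous arguments elsewhere in this paper) actually uses. Your reduction of the target to $\varepsilon+h+j+\ti h+\ti j\leq 9+t$ via \eqref{eq:h+hB} and $\gamma\leq\gamma'$ is fine, but the tools you then deploy can only ever bound $\varepsilon$ from \emph{below}. The log BMY inequality in the form \ref{lem:eps_and_gamma}(v) says $e(D+E)\leq 1+\varepsilon$; combining this with any lower bound $e(D+E)\geq \tfrac12(h+j+\ti h+\ti j)-\tfrac12 t-O(1)$ yields, after substituting \eqref{eq:h+hB}, an inequality of the shape $\gamma'-\varepsilon\leq t+O(1)$. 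That is an upper bound on $\gamma'-\varepsilon$, not on $2\varepsilon+\gamma=\varepsilon+(h+j+\ti h+\ti j)-2$: it is satisfied when $\varepsilon$ and $\gamma$ are simultaneously huge, so it cannot imply the proposition. No step of your plan produces an upper bound on $\varepsilon$, i.e.\ a lower bound on $(\ks+D+E)^2$, and BMY-type inequalities are structurally incapable of supplying one.

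There is also a secondary problem with the hoped-for lower bound itself: by \ref{rem:capacity_using_pairs} the stay pairs contribute $0$ to $e(D+E')$ and a jump pair $\binom{c_i}{p_i}$ contributes $(c_i-p_i)/c_i$, which is as small as $1/c_i$ when $p_i=c_i-1$; so no estimate of the form $e(D+E)\gtrsim \tfrac12(\text{number of pairs})$ holds, and sporadicity does not obviously exclude all such configurations. The actual proof must run through the mechanism you only mention in passing: assuming $2\varepsilon+\gamma\geq 8+t$, one uses Riemann--Roch on a suitable adjoint divisor to produce an effective decomposition (compare the argument with $E_0+m(K_{\ov N}+Z)=\sum A_i$ in the proof of \ref{prop:gamma<=5}) and extracts from it a $(-1)$-curve $L$ with $L\cdot E\leq 1$ and $L\cap S\cong\C^1$, contradicting \ref{lem:no asymptote}. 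That construction, not BMY, is what turns the non-existence of a good asymptote into an \emph{upper} bound on $2\varepsilon+\gamma$, and it is the part your proposal is missing.
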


\begin{corollary}\label{cor:gamma<9} $\gamma\leq 8$ and $h+j+\ti h+\ti j\leq 9+t-\varepsilon\leq 11$.   \end{corollary}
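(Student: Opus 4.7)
The plan is to combine Proposition~\ref{prop:basic_inequality} with equation~\eqref{eq:h+hB} of Lemma 2.7 and the elementary bounds $\varepsilon\geq 0$ (Lemma~\ref{lem:eps_and_gamma}(ii)) and $t\leq 2$ (immediate from $t=t_\lambda+t_{\ti\lambda}$ with $t_\lambda,t_{\ti\lambda}\in\{0,1\}$).

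For the first bound, Proposition~\ref{prop:basic_inequality} yields $\gamma\leq 7+t-2\varepsilon\leq 9$. Sharpening to $\gamma\leq 8$ amounts to excluding the single extremal configuration $(\gamma,t,\varepsilon)=(9,2,0)$. In this situation Lemma~\ref{lem:eps_and_gamma}(v) gives $e(D+E)\leq 1$; but $t=2$ means that both $C$ and $\ti C$ are $(-1)$-tips sitting next to a $(-2)$-twig of $D$, so each of the two maximal admissible twigs produced by the resolution contributes at least $1/2$ to $e(D+E)$. Equality then forces each such twig to consist of a single $(-2)$-curve, and by Remark~\ref{rem:capacity_using_pairs} the last HN-pair on each side must be $\binom{2}{1}$, giving $c_h=\ti c_{\ti h}=2$ and $p_h=\ti p_{\ti h}=1$. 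Substituting these values together with $\gamma'=9$ into \eqref{eq:1} and \eqref{eq:2} produces a numerical contradiction, so $\gamma\leq 8$.

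For the second bound, equation~\eqref{eq:h+hB} rewrites $h+j+\ti h+\ti j\leq 9+t-\varepsilon$ as $\gamma'+2\varepsilon\leq 7+t$, i.e.\ Proposition~\ref{prop:basic_inequality} with $\gamma'$ in place of $\gamma$. A short verification shows this works in our setting: when the snc-minimalization $\Psi$ is the identity one has $\gamma=\gamma'$ and the bound is literally Proposition~\ref{prop:basic_inequality}; otherwise, since all $\Psi$-contractions are inner for $D'+E'$ (as used in the proof of \eqref{eq:h+hB}) and $\varepsilon$ is invariant under such contractions, one can run the argument of Proposition~\ref{prop:basic_inequality} on $(\ov S',D'+E')$ to obtain the same inequality with $\gamma'$. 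The final estimate $9+t-\varepsilon\leq 11$ is immediate from $\varepsilon\geq 0$ and $t\leq 2$.

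The principal obstacle is the extremal-case exclusion for $\gamma\leq 8$, which brings together Lemma~\ref{lem:eps_and_gamma}(v), Remark~\ref{rem:capacity_using_pairs}, and the HN-arithmetic identities~\eqref{eq:1}-\eqref{eq:2}; the tracking of the small discrepancy between $\gamma$ and $\gamma'$ under $\Psi$ in the second part is of the same flavour but milder.
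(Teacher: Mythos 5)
Your overall strategy coincides with the paper's: reduce to the extremal case $(\gamma,\varepsilon,t)=(9,0,2)$ via Proposition~\ref{prop:basic_inequality}, use \ref{lem:eps_and_gamma}(v) together with the two $(-2)$-twigs forced by $t=2$ to constrain the Hamburger--Noether pairs, and then contradict \eqref{eq:1}--\eqref{eq:2}. The problem is that your decisive last step is only asserted. Substituting $c_h=\ti c_{\ti h}=2$, $p_h=\ti p_{\ti h}=1$ and $\gamma'=9$ into \eqref{eq:1} and \eqref{eq:2} does not by itself ``produce a numerical contradiction'': those equations still contain the unknowns $j,\ti j,h,\ti h$ and all the earlier pairs, and mere substitution yields nothing. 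What closes the argument is a parity observation. The paper obtains it by first noting that, since the two twigs already contribute $1/2$ each and $e(D+E)\leq 1$, the divisor $D+E$ has \emph{no further} maximal twigs, hence $D$ is a chain and the pair sequences are exactly $\binom{2}{2}_j,\binom{2}{1}$ and $\binom{2}{2}_{\ti j},\binom{2}{1}$; then the right-hand side of \eqref{eq:1} equals $2j+2\ti j+2$, which is even, while the left-hand side is odd. Your weaker conclusion (only the last pair on each side pinned down) would also suffice, but only after adding the divisibility remark that $c_h=2$ divides every $c_i$ and every $p_i$ with $i<h$ (because $c_{i+1}=\gcd(c_i,p_i)$), so that $jc_1+\sum_i p_i\equiv p_h\equiv 1\pmod 2$ on each side and the right-hand side of \eqref{eq:1} is again even. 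One of these arguments must be written out; as it stands the proof does not close.

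For the second inequality the paper simply combines \eqref{eq:h+hB} with \ref{prop:basic_inequality}, and you are right that this silently uses $\gamma'=\gamma$ (equivalently $2\varepsilon+\gamma'\leq 7+t$), since \eqref{eq:h+hB} involves $\gamma'$ while \ref{prop:basic_inequality} bounds $\gamma$, and a priori $\gamma'\geq\gamma$, which is the wrong direction. However, your proposed repair --- ``running the argument of Proposition~\ref{prop:basic_inequality} on $(\ov S',D'+E')$'' --- cannot be checked: that proposition is quoted from \cite{KR-Some_properties_of_Cstar} and its proof is not reproduced here, and there is no reason to expect it to apply verbatim to the non-snc-minimal pair $(\ov S',D'+E')$. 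The clean statement to invoke is $\gamma'=\gamma$, which the paper proves in Lemma~\ref{lem:Psi_properties}(b) by showing that $\Psi$ never touches $C'$, $\ti C'$ or $E'$; the same point is needed to justify the ``$\gamma'=9$'' in your first part. So the $\gamma$-versus-$\gamma'$ discrepancy is a real issue that you correctly flag, but your way of resolving it is speculative rather than a proof.
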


\begin{proof}
Suppose that $\gamma\geq 9$. By \ref{prop:basic_inequality} $\gamma=9$, $\varepsilon=0$ and $t=2$. By the above remark $D+E$ has at least two $(-2)$-twigs. It follows also that the branches $\lambda$, $\tilde\lambda$ are not simple, because otherwise $t\leq 1$. A maximal twig $[(2)_k]$ contributes to $e(D+E)$ by $\frac{k}{k+1}\geq\frac{1}{2}.$
 By \ref{lem:eps_and_gamma}(v) $e(D+E)\leq 1$, so  $D+E$ has two maximal twigs and they are both $(-2)$-curves. In particular $D$, whence $D'$, is a chain. The sequences of characteristic pairs for $\lambda$ and $\ti \lambda$ are $\binom{2}{2}_j,\binom{2}{1}$ and $\binom{2}{2}_{\ti j},\binom{2}{1}$ respectively. Then the sum $jc_1+\sum p_i+\ti j \ti c_1+\sum\ti p_i=2j+2\ti j+2$ is even, which contradicts \eqref{eq:1}, because $\gamma=9$. The second inequality follows directly from \ref{prop:basic_inequality} and \eqref{eq:h+hB}.
\end{proof}

\subsection{Branches are not simple} Recall that a branch $\lambda$ or $\ti \lambda$ is \emph{simple} if and only it meets $L_\8$ normally.

\bprop\label{prop:no_simple_branch} One can choose coordinates on $\C^2$ so that the branches $\lambda$, $\ti \lambda$ are separated and not simple. In particular, in these coordinates $c_1, \ti c_1>1$ and $h,\ti h\geq 1$.\eprop

\begin{proof} Suppose one of the branches is simple. We may assume it is $\ti \lambda$. Then $\lambda $ is not simple, because otherwise $\bar U$ is a conic, which clearly has a good asymptote.

Suppose $j>0$. After the first blowing-up over $\lambda\cap L_\infty$ the proper transform of $L_\infty$ becomes a $(0)$-curve and it meets the proper transform of $\bar U$ once. Let $L$ be the proper transform of a general member of the linear system of this $(0)$-curve on $\ov S$. Then $L\cdot D=1$ and $L\cdot E=1$, so $L\cap S$ is a good asymptote of $U$; a contradiction.

Thus $j=0$. The morphism $\Psi\colon\ov S'\rightarrow\ov S$ is a composition of blowdowns starting from the contraction of $L'_\infty$ if $L'_\infty$ is a $(-1)$-curve (otherwise $\Psi=\id$). Let $u$ be the number of these blowdowns. Then $-\gamma=-\gamma'+u$. The formulas \eqref{eq:1} and \eqref{eq:2} take the form
\begin{align*}
\gamma+u+2c_1+2&=p_1+\dots +p_h\\
\gamma+u+(c_1+1)^2&=p_1c_1+\dots+p_hc_h.
\end{align*}
Let $c_1=kc_2$, $p_1=k'c_2$. Let $c_1-p_1=\beta c_2$ i.e.\ $\beta=k-k'$. The numbers $k$ and $k'$ are relatively prime. We rewrite the formulas in the following form
\begin{align}
\gamma+u+2+(\beta+k)c_2&=p_2+\dots+p_h,\label{eq:1_simple_branch}\\
\gamma+u+1+\beta kc_2^2+2kc_2&=p_2c_2+\dots+p_hc_h.\label{2_simple_branch}
\end{align}
Multiply \eqref{eq:1_simple_branch} by $c_2$ and subtract \eqref{2_simple_branch}. We get \begin{equation} (\gamma+u+2-2k)c_2-\gamma-1-u+(k+\beta-\beta k)c_2^2=\sum\limits_{i\geq 2}p_i(c_2-c_i)\geq 0,\label{eq:3_simple_branch}\end{equation} hence, because $\gamma\geq 0$ by \ref{lem:eps_and_gamma}(iii), \begin{equation}(\gamma+u+2-2k)c_2 >((k-1)(\beta-1)-1)c_2^2\label{eq:4_simple_branch}.\end{equation} Then $\gamma+u+1-2k\geq ((k-1)(\beta-1)-1)c_2.$

Suppose that $\beta\geq 2$.  Then $k=k'+\beta\geq 3$ and the chain produced by $\binom{c_1}{p_1}$ which is contained between $L_\8'$ and the $(-1)$-curve created by the pair does not consist of only $(-2)$-curves. Hence it starts with $u-1$ $(-2)$-curves and then comes a $(\leq -3)$-curve. So the determinant of that chain, which is equal to $k'$, is at least $2u+1$. We obtain $2u\leq k'-1=k-\beta-1\leq k-3$, which gives $$\gamma> ((k-1)(\beta-1)-1)c_2+\frac{3k+1}{2}.$$ By \ref{cor:gamma<9} it follows that $8\geq \gamma> (k-2)c_2+\frac{1}{2}(3k+1)$, hence $k\leq 3$. Then $k=3$, so $c_2\leq 2$, $u=0$, $\beta=2$ and $k'=1$. It follows that $c_i=c_2=2$ for $i\geq 2$ or $h=1$. From  \eqref{eq:3_simple_branch} we get $(\gamma-4)c_2=\gamma+1+c_2^2$, which has no solution for $c_2\leq 2$ and $\gamma\leq 8$; a contradiction.

Thus $\beta=1$. Then the chain produced by $\binom{c_1}{p_1}$ which is contained between $L_\8'$ and the $(-1)$-curve created by the pair consists of $k'-1$ $(-2)$-curves. $\Psi$ contracts $L_\infty'$ and that subchain, so $u=k'=k-1$. Let $r$ be the number of pairs equal to $\binom{c_2}{c_2}$. Rewrite \eqref{eq:3_simple_branch} as:
$$c_2(\gamma-k+1)-\gamma-k+c_2^2=\sum\limits_{i\geq r+3}p_i(c_2-c_i).$$
Because $c_i<c_2$ for $i\geq r+3$, we have $c_2-c_i\geq\frac{c_2}{2}$. Then $c_2(\gamma-k+1)-\gamma-k+c_2^2\geq\frac{c_2}{2}\sum\limits_{i\geq r+3}p_i$. We get $\sum\limits_{i\geq r+3}p_i< 2(\gamma-k+1)+2c_2 $. From \eqref{eq:1_simple_branch} we get $\gamma+k+(k+1)c_2+1=rc_2+p_{r+2}+\sum\limits_{i\geq r+3}p_i< rc_2+p_{r+2}+2\gamma-2k+2+2c_2$.  Hence $$c_2(k-r-1)\leq p_{r+2}+\gamma-3k.$$

If $r\geq k'$ then, after performing $k'$ blowing ups of type $\binom{c_2}{c_2}$ and successive contractions starting from $L_\infty'$ we get new coordinates on $\C^2$ in which the branches of $\bar U$ are separated and none of them is simple. So we may assume that $r\leq k'-1=k-2$. By \eqref{eq:h+hB} $h=\gamma+k'+\varepsilon+2\geq \gamma+\varepsilon+r+3$, so since $\gamma\geq 0$, we have $h\geq r+3$. In particular, $c_{r+3}\geq 2$. The above inequality reads as $$c_2-p_{r+2}\leq\gamma-3k,$$ which gives $2\leq c_{r+3}\leq \gamma-3k\leq \gamma-6$. By \ref{prop:basic_inequality} it follows that $\gamma=8$ and $\varepsilon=0$, hence $k=2$, $k'=1$ and $c_i=2$ for $i\geq r+3$. Since $r\leq k-2$, we get also $r=0$, so $c_2>p_2$, hence $D+E$ has at least three tips. Since $k=2$ and $p_h=1$, at leat two of them are $(-2)$-tips, so $e(D+E)>1$. But $\varepsilon=0$, so we get a contradiction with \ref{lem:eps_and_gamma}(v).
\end{proof}

\subsection{Properties of the snc-minimalization $\Psi$} From now on we may, and shall, assume that both branches $\lambda$ and $\ti \lambda$ are not simple.

\begin{lemma}\label{lem:S-E_has_k=2} The surface $\ov S\setminus(D+E)=S\setminus U$ is of log general type. \end{lemma}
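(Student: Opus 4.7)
Since $D+E$ contains a loop (the divisor $E$ meets the connected divisor $D$ at two distinct points), we have $|\ks+D+E|\neq\emptyset$, and hence $\kappa(\ks+D+E)\geq 0$, as already noted in the proof of Lemma~\ref{lem:eps_and_gamma}(ii). The plan is to upgrade this to $\kappa(\ks+D+E)=2$ by contradiction: assuming $\kappa(\ks+D+E)\leq 1$, we produce a good asymptote of $U$, contradicting Lemma~\ref{lem:no asymptote}.

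Since $V:=S\setminus U$ is a smooth affine rational surface with $\kappa(V)\leq 1$, the structure theorems for open surfaces of non-general type (Miyanishi-Sugie for $\kappa=-\infty$, Kawamata for $\kappa=1$, and the Fujita / Miyanishi-Tsunoda classification for $\kappa=0$; see \cite[\S2]{Miyan-OpenSurf}) imply that $V$ admits either a $\C^1$-fibration or a $\C^*$-fibration over a smooth curve $B$. In the $\C^1$-fibration case, a general fiber is a closed $\C^1$-curve inside $V\subset\C^2$; a direct analysis of its closure in $S$ (using that $F\cong\C^1$ forces this closure to be again a smooth affine line, else $F$ would be $\C^*$ or worse) yields a closed $\C^1\subset S$ disjoint from $U$, a good asymptote with $A\cdot U=0$, contradicting Lemma~\ref{lem:no asymptote}.

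In the $\C^*$-fibration case $f\colon V\to B$, I resolve the indeterminacies of the induced rational map $\ov S\dashrightarrow\bar B$ by a birational morphism $\sigma\colon\wh S\to\ov S$, yielding a $\PP^1$-fibration $\bar f\colon\wh S\to\bar B$. A general fiber $F\cong\PP^1$ meets the reduced boundary $\sigma^{-1}(D+E)_{\mathrm{red}}$ at exactly two points. Let $\wh E$ denote the proper transform of $E$. If $F\cdot\wh E=2$, then $\sigma(F)$ is disjoint from $D$, hence a complete rational curve in the affine surface $\C^2$, absurd. If $F\cdot\wh E=1$, then $\sigma(F)\cap S\cong\C^1$ has intersection exactly $1$ with $U$ and is a good asymptote, contradiction. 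If $F\cdot\wh E=0$, then $\wh E$ is vertical and Suzuki's Euler-characteristic formula
\[\chi(V)=\chi(B)\chi(\C^*)+\sum_i\bigl(\chi(F_i\cap V)-\chi(\C^*)\bigr)=\sum_i\chi(F_i\cap V),\]
combined with $\chi(V)=\chi(\C^2)-\chi(\C^*)=1>0$, forces some degenerate fiber $F_i$ to satisfy $\chi(F_i\cap V)>0$. This compels an irreducible component $\bar C$ of $F_i$ with $\bar C\cong\PP^1$ and $\bar C\cap V\cong\C^1$. A subcase analysis mirroring the previous trichotomy shows that $\bar C$ descends through $\sigma$ to produce a $\C^1\subset S$ that is either disjoint from $U$ or meets $U$ with intersection $1$, again a good asymptote, yielding the final contradiction.

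The main obstacle is controlling closures of $\C^1$-pieces living inside $V$ across the resolution $\sigma$: one must verify that the closure in $S$ of such a piece is again $\cong\C^1$ (not a singular rational curve or a $\C^*$) and that its intersection with $U$ is at most $1$. The delicate point is in the subcase $F\cdot\wh E=0$, where one must rule out that the $\C^1$-component $\bar C$ is $\sigma$-exceptional or that its pushforward picks up extra intersection with $E$ from $\sigma$-exceptional divisors lying over points of $E$; in each such configuration one either reproduces the completeness contradiction (a $\PP^1$ inside $\C^2$) or obtains a good asymptote directly, closing the argument.
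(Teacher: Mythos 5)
Your reduction of the $\kappa=-\infty$ and $\kappa=1$ cases to fibrations is sound and matches the paper's strategy for those cases (Miyanishi--Sugie for $\kappa=-\infty$, the Iitaka/Kawamata fibration for $\kappa=1$, whose general fiber must be $\C^*$ on an affine surface), and your trichotomy $F\cdot\wh E\in\{0,1,2\}$ together with the Euler-characteristic count in the vertical case is essentially the paper's argument, which runs it more cleanly by observing that $E$ cannot be a $2$-section (else $D$, which carries the big divisor $\Phi^*L_\8$, would be vertical), so that $E\cdot F\leq 1$ and any line found in a fiber is automatically a good asymptote.

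The genuine gap is the case $\kappa(S\setminus E)=0$. It is \emph{not} true that a smooth affine surface of log Kodaira dimension $0$ admits a $\C^1$- or $\C^*$-fibration; neither Fujita's nor Miyanishi--Tsunoda's classification of $\ov\kappa=0$ surfaces asserts this, and there are standard counterexamples (e.g.\ the complement of a smooth cubic in $\PP^2$). So your opening claim that ``$\kappa(V)\leq 1$ implies $V$ is $\C^1$- or $\C^*$-fibered'' fails exactly in the one case that cannot be dispatched by producing a good asymptote from a fiber. The paper closes this case differently: having excluded $\C^1$- and $\C^*$-fibrations (which in particular kills $\kappa=-\infty$ and $\kappa=1$), it first notes that $(\ov S,D+E)$ becomes almost minimal after snc-minimalization because $S\setminus E$ contains no lines (this step is also missing from your write-up, and it is needed before any classification theorem applies), and then invokes the structure theorem \cite[8.8]{Fujita-noncomplete_surfaces} for almost minimal pairs with $\ov\kappa=0$: since $p_a(D+E)=1$, the boundary would have to be a cycle of rational curves. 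But $D+E$ has at least two tips, namely the twigs $Q_1$ and $\ti Q_1$ produced by the last HN-pairs, because both branches at infinity are non-simple (Proposition \ref{prop:no_simple_branch}). This contradiction, coming from the shape of the boundary divisor rather than from a fibration, is the missing ingredient; without it your argument does not rule out $\kappa(S\setminus E)=0$.
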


\begin{proof} The surface $S\setminus E$ has Euler characteristic $1$. Suppose it is $\C^1$- or $\C^*$-fibered. Then there is a line contained in some fiber (for a $\C^*$-fibration we argue as in \ref{lem:eps_and_gamma}(iii)). We may assume, blowing on $D$ if necessary, that the fibration extends to a $\PP^1$-fibration of $\ov S$. Let $L$ be the closure of the line and $F$ the fiber of the extension containing it. We have $E\cdot L\leq E\cdot F$. Since $S$ contains no completes curves, $E$ is not a $2$-section of the fibration. It follows that $E\cdot F\leq 1$, so $L$ is a good asymptote of $U$; a contradiction. Thus $S\setminus E$ is neither $\C^1$- nor $\C^*$-ruled. Because $S\setminus E$ contains no lines, the pair $(\ov S,D+E)$ becomes almost minimal after the snc-minimalization of $D+E$. By structure theorems for affine surfaces, if $S\setminus E$ is not of general type then $\kappa(S\setminus E)=0$ and by \cite[8.8]{Fujita-noncomplete_surfaces} the image of $D+E$, having arithmetic genus one, is a cycle of rational curves. Then $D+E$ is a cycle of rational curves. But,  since  both branches of $\bar U$ at infinity are not simple, $D+E$ has at least two tips. We reach a contradiction.
\end{proof}

\bcor\label{cor:eps=3-h0} $\varepsilon=3-h^0(2\ks+D+E)$.\ecor

\begin{proof} The Riemann-Roch theorem gives $\chi(\cal O_{\ov S}(2\ks+D+E))=\ks\cdot(\ks+D+E)+p_a(D+E)=3-\varepsilon$. Because $(K+D+E)^+$ is nef and (by \ref{lem:S-E_has_k=2}) big and because $(K+D+E)^-$ has proper fractional coefficients, the Kawamata-Viehweg vanishing theorem gives $\chi(\cal O_{\ov S}(2\ks+D+E))=h^0(2\ks+D+E)$.
\end{proof}

\bcor\label{lem:j<=1} $j\leq 1$. \ecor

\begin{proof} Suppose $j\geq 2$. Then $\ti j\geq 2$ and after blowing twice over each point of intersection of $\bar U$ with $L_\8$ the proper transform of $2L_\8$ together with the exceptional curves meeting it constitute a divisor $F_\8$ of type $[2,1,2]$ disjoint from the proper transform of $\bar U$. The linear system $|F_\8|$ gives a $\C^*$-fibration of $S\setminus U$, which contradicts \ref{lem:S-E_has_k=2}.
\end{proof}

If $j\neq 0$ then the line on $\PP^2$ tangent to $\lambda$ is different than $L_\8$. We denote its proper transform on $\ov S$ by $L_F$. We define $L_{\ti F}$ analogously.

\blem\label{lem:tangents} If $j\neq 0$ then $L_F$ meets the tip of $D'$ created by $\binom{c_1}{p_1}$. The tip and $L_F$ are not touched by $\Psi$. Moreover, $L_F\cdot E\geq 2$. \elem

\begin{proof} By \ref{lem:j<=1} $j=1$. Since the tangent to $\lambda$ on $\PP^2$ is a smooth curve, after blowing up twice over $\lambda\cap L_\8$ it separates from $\lambda$, which gives the first claim. The second claim follows from \ref{lem:Psi_properties}(a) below. Since $L_F\cap S\cong \C^1$, $L_F\cdot E\geq 2$ by \ref{lem:no asymptote}.
\end{proof}

\begin{notation}\label{not:C_and_Q} Recall that the curves $C'$ and $\ti C'$ (defined in \ref{ssec:HN}) are different. We put $C=\Psi_*(C')$, $\ti C=\Psi_*(\ti C')$, where $\Psi\: (\ov{S}',D'+E')\to (\ov{S},D+E),$ is as in \ref{not:setup}, and we define $Q_1$ (respectively $\ti Q_1$) as the connected component of $D-C$ (respectively of $D-\ti C$) which does not contain $\ti C$ (respectively $C$). Put $Q_0=D-C-\ti C-Q_1-\ti Q_1$. It follows that $Q_1$ and $\ti Q_1$ are chains. They are the maximal twigs of $D+E$ created by the pairs $\binom{c_h}{p_h}$, $\binom{\tilde c_h}{\tilde p_h}$ (nonzero, because the log resolution $\Phi$ is minimal). We denote by $G$ and $\ti G$ the components of $Q_0$ meeting $C$ and $\ti C$ respectively (see Fig. \ref{fig:Sbarprim}.) Let $T_1'$ (resp. $\tilde T_1'$) be the  branching component of $D'+E'$ contained in $F$ (resp. $\tilde F$) nearest to $L'_\infty$. These are the last curves produced by the pairs $\binom{c_{1}}{p_{1}}$ and $\binom{\ti c_{1}}{\ti p_{1}}$ respectively (note in case $h=1$ we have $T_1'=C_1'$; similarly for $\ti T_1'$.) Put $T_1=\Psi_*T_1'$ and $\ti T_1=\Psi_*\ti T_1'$.
\end{notation}

\begin{lemma}\label{lem:Psi_properties}\
	\begin{enumerate}[(a)]
    \item $\Psi$ involves only contractions which are inner for $D'+E'$ and it does not touch maximal twigs of $D'+E'$.
    \item $C'$, $\ti C'$ and $E'$ are not touched by $\Psi\colon\ov S'\To\ov S$. In particular, $\gamma'=\gamma$.
    \item $C$ and $\wt C$ meet different components of $Q_0$, i.e.\ $G\neq \wt G$.
    \item $Q_0$ is snc-minimal and contains only curves of negative self-intersection.
	\end{enumerate}
\end{lemma}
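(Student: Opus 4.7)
The plan is to combine the minimality of $\Phi$, the non-simplicity of both branches (Proposition \ref{prop:no_simple_branch}), and the tree structure of $D' = L_\infty' \cup F \cup \tilde F$, where $F$ and $\tilde F$ are the exceptional configurations above the two branches at infinity and are joined only through $L_\infty'$.

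For (a), the first contraction in $\Psi$ (if any) is of $L_\infty'$ as a $(-1)$-curve. Since $h,\tilde h\geq 1$, both $F$ and $\tilde F$ are nonempty, so $L_\infty'$ meets at least one component in each; tracking the blowups on $L_\infty$ from the HN-pair process and using $j\leq 1$ (Lemma \ref{lem:j<=1}) to enumerate the $(j,\tilde j)$ for which $L_\infty'^2=-1$, I would verify that $L_\infty'$ has exactly two neighbors in $D'$, so its contraction is inner. For any subsequent $(-1)$-curve contracted by $\Psi$, if it met fewer than two other components we could have omitted the corresponding blowup from $\Phi$, contradicting its minimality; the same reasoning rules out contractions touching maximal twigs.

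For (b), $E'$ is not a component of $D$ and so is preserved by $\Psi$'s definition. Each of $C'$ and $\tilde C'$ meets $E'$, meets its predecessor in its own HN-chain, and meets at least one further component from its own chain since the twig $Q_1$ (resp.\ $\tilde Q_1$) is nonzero by minimality of $\Phi$; hence $C'$ and $\tilde C'$ are branching components of $D'+E'$, so $\Psi$ does not contract them. Part (c) then follows: by (a) and (b) the chain of contractions in $\Psi$ begins at $L_\infty'$ and propagates via inner steps, but cannot reach the branching curves $C'$ and $\tilde C'$. Consequently $G$ lies in the image of the $\lambda$-side tree $F$ and $\tilde G$ in the image of the $\tilde\lambda$-side tree $\tilde F$; since $C,\tilde C$ are not adjacent to $L_\infty'$ (because the final pairs $\binom{c_h}{p_h}$, $\binom{\tilde c_{\tilde h}}{\tilde p_{\tilde h}}$ produce chains of length $\geq 2$ by minimality of $\Phi$), the components $G$ and $\tilde G$ are distinct.

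For (d), every component of $Q_0$ other than possibly $L_\infty'$ is an exceptional curve of $\Phi$, hence of negative self-intersection. If $L_\infty'\in Q_0$ (the case $\Psi=\id$), the non-simplicity of both branches forces at least two blowups on $L_\infty$, so $L_\infty'^2\leq -1$, and $\Psi=\id$ excludes the value $-1$, yielding $L_\infty'^2\leq -2$. For snc-minimality: any $(-1)$-curve $V\in Q_0$ violating it would be branching in $D+E$ but not in $Q_0$, hence would need a neighbor outside $Q_0$; since $V\cdot E=0$ and $V$ cannot meet the twigs $Q_1,\tilde Q_1$ (which are connected to the rest of $D$ only through $C,\tilde C$), $V$ must meet $C$ or $\tilde C$. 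The case of $V$ adjacent to both is ruled out by (c), as it would force $G=\tilde G=V$; the remaining case of $V$ adjacent to exactly one of $C,\tilde C$ is incompatible with the recursive chain structure of the HN-pairs. The main obstacle I anticipate is in this last step of part (d), where eliminating the single-adjacency case requires a close look at where $(-1)$-curves can arise in the backbone connecting successive HN-pair chains.
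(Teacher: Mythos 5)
Your proposal treats the lemma as a purely combinatorial statement about the HN-resolution, but parts (b), (c) and (d) genuinely require the hypothesis that $U$ has no good asymptote; the paper's proof invokes it through the tangent line $L_F$ of Lemma \ref{lem:tangents} and $\PP^1$-fibration arguments, and your argument never uses it. The most serious gap is in (b): you show that $C'$ and $\ti C'$ are branching in $D'+E'$ and hence are not \emph{contracted}, but the claim is that they are not \emph{touched}, i.e.\ that no blow-down in $\Psi$ has its center on them. When $h=1$ the curve $C'=T_1'$ is an endpoint of the chain containing $L_\8'$ inside which all the contractions of $\Psi$ take place, and nothing combinatorial prevents those contractions from propagating all the way to the neighbour of $C'$; the paper excludes this by noting that a touched $C'$ becomes a $0$-curve at some stage, yielding a $\PP^1$-fibration with $E'$ as a $1$-section in which $L_F$ is vertical, so $L_F\cdot E\leq 1$, contradicting $L_F\cdot E\geq 2$. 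Without ``not touched'' you also lose the fact, needed in (d), that $C$ and $\ti C$ are still $(-1)$-curves on $\ov S$.

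In (c), your assertion that $C$ and $\ti C$ cannot be adjacent to $L_\8'$ ``because the final pairs produce chains of length $\geq 2$'' is false: for $j=0$, $h=1$ and $p_1=1$ the last exceptional curve of $\binom{c_1}{1}$ is adjacent to $L_\8'$, and this is exactly the configuration in which $G=\ti G=L_\8'$ could occur; the paper kills it by showing it forces $h=\ti h=1$ and $p_1=\ti p_1=1$, hence $\bar U$ smooth, hence a conic, which has a good asymptote. In (d), the claim that every component of $Q_0$ is exceptional for $\Phi$ and therefore has negative self-intersection confuses $\ov S'$ with $\ov S$: each inner contraction of $\Psi$ raises by one the self-intersection of the two curves it touches, so e.g.\ $T_1$ could a priori become a $0$-curve on $\ov S$; ruling this out again needs $L_F$ (if $D_0^2\geq 0$ then $L_F$ lies in a member of $|D_0|$ and $L_F\cdot E\leq D_0\cdot E\leq 1$). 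Finally, the single-adjacency case of the snc-minimality, which you explicitly leave open, is precisely where the paper uses the fibration $|C+D_0|$ together with $L_F\cdot(C+D_0)=0$ and $L_F\cdot E\geq 2$; it cannot be settled by inspecting the chain structure alone, since $\Psi$ can turn the curve $G$ adjacent to $C$ into a non-branching $(-1)$-curve of $Q_0$.
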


\begin{proof}
(a) There is a unique chain in $D'$ meeting both $T_1'$ and $\tilde T'_1$ and not containing them. It contains $L'_\infty$ and its components are non-branching in $D'+E'$. All contractions in $\Psi$ take place inside this chain. The statement follows.

(b) Since neither $\lambda$ nor $\tilde \lambda$ is simple, $E'$ is disjoint from $L'_\infty$. Suppose $C'+\ti C'$ is touched by $\Psi$ at some stage (this may happen only if $h=1$ or $\ti h=1$). We have $\Psi\neq \id$, so $L_\8'$ is a $(-1)$-curve. Since the branches are not simple, this implies that $j>0$, hece $j=1$ by \ref{lem:j<=1}. At some stage of $\Psi$ the proper transform of $C'$ or of $\ti C'$ becomes a $0$-curve and its total transform on  $\ov S'$ induces a $\PP^1$-fibration $\pi$ of $\ov S'$ with $E'$  as a $1$-section. Then $L_F$ is contained in a fiber, so it is met by $E$ at most once, hence it is a good asymptote; a contradiction. Therefore, $C'+\ti C'$ is not touched by $\Psi$. It follows that $E'$ is not touched.

(c) Suppose $G=\wt G$. Since by (b) $C'+\ti C$ is not touched, the proper transform of $G$ in $\ov S'$ is a component of $D'$ meeting both $C'$ and $\ti C'$. It must be $L'_\infty$, so $h=\ti h=1$ and $p_1=\ti p_1=1$. But then $\ov U$ is smooth, so it is a conic. We reach a contradiction, because in the latter case there exists a good asymptote of $U$.

(d) Suppose $Q_0$ contains a non-branching $(-1)$-curve $D_0$. Because, possibly with the exception of $E$, the divisor $D+E$ contains no non-branching $(-1)$-curves, $D_0$ is a branching component of $D+E$. Thus $D_0$ meets $C$ or $\ti C$, say $C$. We have $C\cdot \ti C=0$, so $D_0\neq \ti C$.  Also, $\Psi\neq \id$, so $j\neq 0$. By (b) $C$ and $\ti C$ are $(-1)$-curves. By \ref{lem:tangents} $L_F\cdot (C+D_0)=0$ and $L_F\cdot E\geq 2$. The former equality implies that $L_F$ is contained in a fiber of the $\PP^1$-fibration of $\ov S$ given by the linear system $|C+D_0|$. Then $L_F\cdot E\leq (C+D_0)\cdot E=C\cdot E=1$; a contradiction.

Suppose $Q_0$ contains a component $D_0$ of non-negative self-intersection. Then $D_0$ is, by the definition of $D+E$, contained in the chain $T_1+R+\ti T_1$, where $T_1=\Psi_*T_1'$, $\ti T_1=\Psi_*\ti T_1'$ and $R$ is the chain in $D$ between $T_1$ and $\ti T_1$. Also, $\Psi\neq \id$, so $j>0$, hence $L_F$ is contained in a member of the linear system $|D_0|$. Then $L_F\cdot E\leq D_0\cdot E\leq 1$; a contradiction with \ref{lem:tangents}.
\end{proof}

\blem \label{prop:gamma>1} $\gamma\geq 2$.
\elem

\begin{proof}Suppose that $\gamma=1$, i.e.\ $E^2=-1$.  Let $T_1$ and $\ti T_1$ be as in \ref{not:C_and_Q}. The divisor $F=C+E$ induces a $\PP^1$-fibration of $\ov S$ for which $D$ has three horizontal components: $G$, $\ti C$ and $Q_1^+\subseteq Q_1$. Suppose that $A$ is a $(-1)$-curve in $D-C-\ti C$. Then $A$ is contained in $Q_0$. By \ref{lem:Psi_properties}(d) $A$ is a branching component of $Q_0$. Also, $\Psi\neq \id$, so $j\neq 0$. If $A\cdot C=1$ then the linear system $|C+A|$ induces a $\PP^1$-fibration of $\ov S$ for which $E$ is a section, so $L_F$ if vertical and $L_F\cdot E\leq 1$, which contradicts \ref{lem:no asymptote}. Therefore, $A\cdot C=0$. Let $F_A$ be the fiber containing $A$. Since fibers of $\PP^1$-fibrations do not contain branching (in fiber) $(-1)$-curves, $C$ meets one of the adjacent components of $A$ in $D$, which is therefore a section of the fibration. It follows that multiplicity of $A$ in $F_A$ equals $1$. But the remaining two adjacent components meet $A$ and are contained in $F_A$, hence the multiplicity of $A$ is greater than $1$; a contradiction.

Thus $D-C-\ti C$ contains no $(-1)$-curves, so the only vertical $(-1)$-curve in $D$ is $C$. It follows that there is no fiber contained in $D$. Indeed, such a fiber would have to be smooth, and by \ref{lem:Psi_properties}(d) we know that $D$ contains no $(0)$-curves. Let $F_1$ be the fiber containing $\ti G$ (note $G\neq \ti G$ by \ref{lem:Psi_properties}(c)). By \ref{lem:rulings} $\Sigma_S=1$, i.e., there exists exactly one fiber $F_2$ which contains more than one, and in fact two, components not contained in $D$. Any other fiber contains a unique component not contained in $D$.  It may happen that $F_1=F_2$. Suppose there is a singular fiber $F_0$ other than $E+C$, $F_1$ and $F_2$. The unique component $L_0\subseteq F_0$ not contained in $D$ is also the unique $(-1)$-curve in $F_0$. Since $L_0$ has multiplicity bigger than $1$ and since $F_0-L_0$ has at most two connected components (both contained in $D$), two of the sections contained in $D$ meet a common connected component of $F_0-L_0$. Since $C$ is not a component of $F_0$, the sections are necessarily $G$ and $\ti C$, hence $F_0$ contains $\ti G$; a contradiction. Since the three sections contained in $D$ are disjoint, it is easy to see that we may contract successively all $(-1)$-curves in $E+C+F_1+F_2$ in such a way that the images of these sections, call them $H_1$, $H_2$, $H_3$, remain disjoint. Because there are no other singular fibers, this results with a morphism $\ov S\to \F$ onto a Hirzebruch surface $\F$. Since $H_2-H_3$ intersects trivially with $H_1$ and with a fiber, it is numerically trivial, hence $H_2^2=H_2\cdot H_3=0$. Then $\F=\PP^1\times \PP^1$, so $S\setminus (E+C+F_1+F_2)$ is isomorphic to $\PP^1\times \PP^1$ with three fibers and two disjoint sections (images of $G$ and $Q_1^+$) removed, i.e.\ to $\C^{**}\times \C^*$. But this means that $S\setminus E=S\setminus U$ contains an open subset with Kodaira dimension equal to $\kappa(\C^{**}\times \C^*)=1$, which contradicts \ref{lem:S-E_has_k=2}.
\end{proof}

\section{\texorpdfstring{Surgeries on $(\C^2,U)$}{Surgeries on (C2,U)}}\label{sec:surgeries}

In this section we analyse surfaces resulting from surgeries on $S=\C^2$. We cut out $U$ and we glue in $C$, $\ti C$ or both. Studying the geometry of resulting surfaces we obtain a lot of information on the $\C^*$-embedding $U\mono S=\C^2$. In particular, we improve the upper bound on $\gamma$ to $\gamma\leq 5$.

\subsection{Double sided surgery - the surface $Y$}

By \ref{lem:Psi_properties}(a) $\Psi$ does not touch maximal twigs of $D'+E'$. Recall that $Q_1$ and $\wt Q_1$ are the maximal twigs of $D+E$ which are images of maximal twigs of $D'+E'$ produced by the pairs $\binom{c_h}{p_h}$ and $\binom{\ti c_{\ti h}}{\ti p_{\ti h}}$ respectively (see \ref{not:C_and_Q}). We have $D=Q_1+C+Q_0+\ti C+\ti Q_1$. Put $Q=Q_1+Q_0+\ti Q_1+E$ and $Y=\ov S\setminus Q.$  This surface is obtained from $S=\C^2$ by cutting out $E$ and gluing in $C+\ti C$. We have $\chi(Y)= -1$.

\begin{lemma}\label{lem:k(Y)} If $\gamma=2$ then $\kappa(Y)=-\8$. If $\gamma+t\geq 6$ (or, more generally, if $\varepsilon\leq t$) then $2\ks+Q\geq 0$, so $\kappa(Y)\geq 0$. \end{lemma}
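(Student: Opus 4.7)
The plan is to handle the two assertions by different techniques.

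For the effectivity $2\ks+Q\geq 0$ under $\varepsilon\leq t$, my main input is Corollary~\ref{cor:eps=3-h0}, which gives $h^0(2\ks+D+E)=3-\varepsilon$. The strategy is to show that $C$ and $\ti C$ lie in the fixed part of $|2\ks+D+E|$, either by peeling or by a restriction sequence. When $t_\lambda=1$, the twig $Q_1$ of $D+E$ is a chain of $(-2)$-curves ending at the $(-1)$-curve $C$. Since $(\ks+D+E)^-=\Bk(D+E)$ by Lemma~\ref{lem:eps_and_gamma}(iv), and the bark by definition intersects a tip of a twig with $-1$ and other twig components with $0$, one gets $(2\ks+D+E)\cdot(\text{tip of }Q_1)=K\cdot(\text{tip})+(K+D+E)\cdot(\text{tip})=0+(-1)=-1$, so the tip is in the fixed part. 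Iterating (each successive $(-2)$ becomes a new tip of intersection $-1$ with the reduced system) peels off all of $Q_1$, and finally $(2\ks+D+E-Q_1)\cdot C=(\gamma_C+m_C-3)-(Q_1\cdot C)=(1+2-3)-1=-1$ (using $\gamma_C=1$ by Lemma~\ref{lem:Psi_properties}(b) and the fact that $C$ meets exactly two components of $D-C$, namely $Q_1$ and $G\subset Q_0$); hence $|2\ks+D+E|=(Q_1+C)+|2\ks+D+E-Q_1-C|$ with $h^0$ of the residual still $3-\varepsilon$. An analogous statement holds for $\ti Q_1+\ti C$ when $t_{\ti\lambda}=1$.

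The three cases $t\in\{0,1,2\}$ are combined as follows. For $t=2$ (so $\varepsilon\leq 2$): peeling both sides gives $h^0(2\ks+D+E-Q_1-C-\ti Q_1-\ti C)=3-\varepsilon\geq 1$, and adding $Q_1+\ti Q_1$ to any effective representative produces an element of $|2\ks+Q|$. For $t=1$ (say $t_\lambda=1$, $\varepsilon\leq 1$): peeling $Q_1+C$ leaves a system of dimension $2-\varepsilon$; the restriction sequence $0\to\cal O(G-\ti C)\to\cal O(G)\to\cal O_{\ti C}(G|_{\ti C})\to 0$ with $G=2\ks+D+E-Q_1-C$ and $\deg G|_{\ti C}=\gamma_{\ti C}-1=0$ (by Lemma~\ref{lem:Psi_properties}(b)) costs at most one dimension, giving $h^0(2\ks+D+E-Q_1-C-\ti C)\geq 2-\varepsilon\geq 1$; adding $Q_1$ yields $|2\ks+Q|\neq\emptyset$. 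For $t=0$ (so $\varepsilon=0$, $h^0=3$) the same restriction argument is applied to both $C$ and $\ti C$, losing at most one dimension each, ending with $h^0(2\ks+Q)\geq 1$. Effectivity of $2\ks+Q$ implies $\kappa(Y)\geq 0$ since $2(\ks+Q)=(2\ks+Q)+Q\geq 0$.

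For $\gamma=2\Rightarrow\kappa(Y)=-\infty$, the plan is to produce a $\C^*$- or $\C^1$-fibration of $Y$ from an explicit pencil on $\ov S$. With $E^2=-2$ and $C^2=\ti C^2=-1$ (by Lemma~\ref{lem:Psi_properties}(b)), $E\cdot C=E\cdot\ti C=1$, $C\cdot\ti C=0$, one checks that $F_0:=E+C+\ti C$ is a chain of three smooth rational curves with $F_0^2=0$, $K\cdot F_0=-2$ and $p_a(F_0)=0$, so by Riemann-Roch $h^0(F_0)\geq 2$; this yields a pencil and, after possibly resolving base points, a $\PP^1$-fibration $\pi\:\ov S\to\PP^1$ having $F_0$ as a reducible fiber. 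The horizontal components of $Q$ for $\pi$ are exactly the components meeting $F_0$ in one point, and any other component of $Q$ is fibral. Using the no-good-asymptote hypothesis (Lemma~\ref{lem:no asymptote}) to exclude extra horizontal $\C^1$-sections through $E$, together with the rigidity imposed by $\gamma=2$, one forces the number of distinct intersection points of a general fiber with $Q$ to drop to at most two, giving the desired $\C^*$- or $\C^1$-ruling of $Y$.

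The main obstacle is this last reduction: the naive count gives four possible horizontal intersections (one on each of $Q_1,\ti Q_1$ and two on $Q_0$ via $G$ and $\ti G$), which would instead yield $\kappa(Y)\geq 1$; carrying out the proof requires exploiting the no-good-asymptote constraint together with the detailed geometry forced by $\gamma=2$ to rule out such a situation.
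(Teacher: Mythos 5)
Your treatment of the first assertion has a genuine gap, which you partly acknowledge but which is in fact fatal to the chosen strategy. The pencil $|E+C+\ti C|$ does give a $\PP^1$-fibration, but $Q$ necessarily has \emph{four} horizontal components for it: the components of $Q_1$ and $\ti Q_1$ meeting $C$ and $\ti C$, plus $G$ and $\ti G$ in $Q_0$, and $G\neq\ti G$ by \ref{lem:Psi_properties}(c). All four are $1$-sections, so the general fiber of the induced fibration of $Y$ is $\PP^1$ minus four points, of log Kodaira dimension $1$; this cannot witness $\kappa(Y)=-\8$, and there is no mechanism for ``dropping to at most two points.'' Even a $\C^*$-fibration would not suffice (easy addition only gives $\kappa\leq 1$; compare $\C^*\times\C^*$). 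The paper's argument is of a completely different and much simpler kind: when $\gamma=2$, the curve $E$ is a connected component of $Q$ with $K_{\ov S}\cdot E=0$ and $E^2=-2$, so $E\cdot n(\ks+Q)=-2n$ forces $nE$ into the fixed part of $|n(\ks+Q)|$; hence $\kappa(Y)=\kappa(\ks+Q-E)=\kappa(\ks+D-C-\ti C)\leq\kappa(\ks+D)=\ovk(\C^2)=-\8$. No fibration is needed, and indeed none with small fibers exists in general.

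Your proof of the second assertion is correct and takes a genuinely different route from the paper's. The paper computes $\ks\cdot(\ks+Q)=4-\varepsilon$, assumes $h^0(2\ks+Q)=0$, and uses Riemann--Roch with Serre duality to produce an effective divisor of the form $-\ks-(\text{part of }Q)$, which combined with $2\ks+D+E\geq 0$ forces $\ks\geq 0$, a contradiction; the $(-2)$-twigs supplied by $t$ enter to make the $h^2$-term usable when $\varepsilon>0$. You instead peel $Q_1+C$ (and $\ti Q_1+\ti C$) off the fixed part of $|2\ks+D+E|$ directly and pay at most one dimension per restriction to $C$ or $\ti C$ when the corresponding twig is not a $(-2)$-chain; the budget $h^0(2\ks+D+E)=3-\varepsilon$ from \ref{cor:eps=3-h0} exactly covers the $2-t$ restrictions needed. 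This is a valid and arguably more transparent argument. One bookkeeping slip: $C$ meets \emph{three} components of $D+E-C$ (the last component of $Q_1$, the component $G$ of $Q_0$, and $E$), not two; the correct computation is $(2\ks+D+E-Q_1)\cdot C=2(-1)+(-1+3)-1=-1$, which happens to agree with your final value, so the conclusion stands.
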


\begin{proof} If $\gamma=2$ then $nE$ is in the fixed part of a divisor $n(\ks+Q)$, so $\kappa(Y)=\kappa(\ks+Q_0+Q_1+\ti Q_1)=\kappa(\ks+D-C-\wt C)\leq \kappa(\ks+D)=-\8$. Note that if $\gamma+t\geq 6$ then \ref{prop:basic_inequality} gives $\varepsilon\leq t+\frac{1}{2}$. Thus for the proof of the second part of the lemma we may assume $\varepsilon\leq t$. By \ref{lem:Psi_properties}(b) $C$ and $\wt C$ are $(-1)$-curves, so $$\ks\cdot(\ks+Q)=\ks\cdot(\ks+D+E)-\ks\cdot C-\ks\cdot \ti C=4-\varepsilon.$$ We have $\varepsilon\leq 2$, so by \ref{cor:eps=3-h0} $2\ks+D+E\geq 0$. Suppose $h^0(2\ks+Q)=0$. If $\varepsilon=0$ then the Riemann-Roch theorem gives $h^0(2\ks+Q)+h^0(-\ks-Q)\geq \ks\cdot(\ks+Q)+p_a(Q)=1,$ so $\ks+C+\wt C=(2\ks+D+E)+(-\ks-Q)\geq 0$ and hence $\ks\geq 0$, which is impossible.

Suppose $\varepsilon=1$. Then $\ks\cdot(\ks+Q)=3$ and $t\geq 1$, so $Q_1$ or $\ti Q_1$, say $\ti Q_1$, consists of $(-2)$-curves. The Riemann-Roch theorem gives $h^0(-\ks-Q_0-Q_1-E)+h^0(2\ks+Q_0+Q_1+E)> 0,$ so $\ks+C+\ti C+\ti Q_1=2\ks+D+E+(-\ks-Q_0-Q_1-E)\geq 0,$ hence $\ks\geq 0$; a contradiction.

Thus $\varepsilon=2$. Then $\ks\cdot(\ks+Q)=2$ and $t=2$, so $Q_1$ and $\ti Q_1$ consist of $(-2)$-curves. Now $h^0(-\ks-Q_0-E)+h^0(2\ks+Q_0+E)>0,$ so $\ks+C+\ti C+\ti Q_1+Q_1=2\ks+D+E+(-\ks-Q_0-E)\geq 0,$ hence $\ks\geq 0$; a contradiction.\end{proof}

\begin{lemma}\label{lem:log-exc_curves_on_Y} Assume $Y$ has no $\C^1$-fibration. Then there is no $(-1)$-curve $L$ on $\ov S$ such that $L\cdot Q_0=0$, $L$ meets two connected components of $Q_1+E+\ti Q_1$ and together with these components contracts to a quotient singularity. \end{lemma}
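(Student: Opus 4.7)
The plan is to argue by contradiction: I would assume such a $(-1)$-curve $L$ exists and derive a $\C^1$-fibration of $Y$, contradicting the hypothesis. Let $R_1,R_2$ denote the two connected components of $Q_1+E+\ti Q_1$ met by $L$ and set $R=R_1+R_2$, $\Theta=L+R$. The snc hypothesis forces $L$ to meet each $R_i$ transversely in a single point, so $L\cdot Q=2$ and $L\cap Y\cong \C^*$. Up to the symmetry $Q_1\leftrightarrow \ti Q_1$ there are two cases: (a) $\{R_1,R_2\}=\{Q_1,E\}$ and (c) $\{R_1,R_2\}=\{Q_1,\ti Q_1\}$.

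The core construction is to produce a nef $\Q$-divisor $M$ on $\ov S$ with $M^2=0$ and $\Theta\subseteq \Supp M$. Because $\Theta$ contracts to a quotient singularity, the intersection form on $\Theta$ is negative definite, so there is a unique $\Q$-divisor $N$ supported on $R$ with $(L+N)\cdot Z=0$ for every component $Z$ of $R$. Setting $M_0=L+N$, a direct computation using the bark formalism and the discriminant recurrence \eqref{eq:d(T)_recurrence} gives $M_0^2=-1+e(R_1)+e(R_2)$ in the simplest chain configurations (where $L$ meets the tip of each $R_i$); thus $M_0^2\leq 0$, with equality exactly when $e(R_1)+e(R_2)=1$. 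In that favourable situation set $M=M_0$, and $|mM|$ for large $m$ defines a $\PP^1$-fibration $\pi\colon\ov S\to\PP^1$ with $\Theta$ contained in a degenerate fiber. In the remaining configurations of $\Theta$ (fork case, or $L$ meeting a non-tip component of some $R_i$) I would supplement $M_0$ by a correction coming from the global geometry of $\ov S$ — the natural candidates are the tangent line $L_F$ of \ref{lem:tangents} when $j\neq 0$, or one of the $(-1)$-curves $C,\ti C$ (using \ref{lem:Psi_properties}(b)) — to arrange $M^2=0$ and obtain the fibration.

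To conclude, I would identify a section of $\pi$ contained in $Q$. In case (c), $E\subseteq Q$ is disjoint from $\Theta$, and using $E\cdot C=E\cdot\ti C=1$ together with the verticality of $\Theta$ forces $E$ to be a section of $\pi$, giving the required section in $Q$. In case (a), $E$ is in $\Theta$, so the section must be sought among components of $Q$ disjoint from $\Theta$ — namely a tip of $\ti Q_1$ or an appropriate component of $Q_0$, whose disjointness from $\Theta$ and snc-minimality are ensured by \ref{lem:Psi_properties}(c),(d). Once a section in $Q$ is identified, a general fiber of $\pi$ meets $Q$ at exactly one point, so $\pi|_Y\colon Y\to\PP^1$ is the desired $\C^1$-fibration, contradicting the hypothesis.

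I expect the main obstacle to be the construction of $M$ when $M_0^2<0$: the correction term must be carefully engineered using the specific structure of $(\ov S,D+E)$ and of the log resolution $\Phi$ established in Sections 2 and 3, and the verification of nefness and square zero after correction is sensitive to the shapes of $R_1$ and $R_2$. A secondary subtlety, in case (a), is checking that the candidate section actually lies in $Q$ rather than in $C+\ti C$ or partially in the vertical part $\Theta$; this requires tracking the multiplicities of $\Theta$-components in the fiber $M$ and using the combinatorial structure of $Q_0$.
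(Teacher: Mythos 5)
Your approach has a genuine gap at its core. The hypothesis is that $L$ together with the two connected components $R_1,R_2$ contracts to a quotient singularity, which forces the intersection matrix of $\Theta=L+R_1+R_2$ to be \emph{negative definite}. Consequently every nonzero $\Q$-divisor supported on $\Theta$ has strictly negative square; in particular your $M_0=L+N$ always satisfies $M_0^2=-1+e(R_1)+e(R_2)<0$, and the ``favourable situation'' $e(R_1)+e(R_2)=1$ in which you set $M=M_0$ is vacuous. The entire construction therefore rests on the unspecified ``correction term'' coming from $L_F$ or from $C,\ti C$, and you give no mechanism for why such a correction producing a nef square-zero divisor should exist for all admissible shapes of $R_1,R_2$ (note also that $R_1,R_2$ range over $\{Q_1,E,\ti Q_1\}$, and $E$ is a single curve of self-intersection $-\gamma$, so the configurations vary considerably). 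Even granting the fibration, your concluding step needs not just one section of $\pi$ inside $Q$ but that \emph{every other} component of $Q$ is vertical; otherwise $\pi|_Y$ has general fiber $\PP^1$ minus more than one point and is not a $\C^1$-fibration. This is not addressed.

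For comparison, the paper's proof takes an entirely different, non-fibration-theoretic route: it contracts $L+R_1+R_2$ to a quotient point and the remaining connected component $Q_2$ of $Q_1+E+\ti Q_1$ to a cyclic quotient point, obtaining an open surface $X'=\ov X'\setminus Q_0$. Using van Kampen's theorem it shows $X'$ is simply connected, uses independence of the components of $Q_0$ in $\NS(\ov X')$ and Fujita's criterion to show $X'$ is affine, and then an Euler characteristic count gives that $X'$ is contractible with $\kappa(X')=-\8$ and a nontrivial cyclic singularity $q_2$. The classification of such surfaces (\cite{KR-Contractible_surfaces}) then yields a $\C^1$-fibration of $X'\setminus\Sing X'$, hence of $Y$, giving the contradiction. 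The $\C^1$-fibration is thus \emph{produced by a structure theorem}, not constructed by hand from a square-zero divisor; this is precisely what circumvents the negative-definiteness obstruction that defeats your construction.
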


\begin{proof} Suppose that such an $L$ exists. Let $\pi\colon \ov S\To \ov X$ be the contraction of $L$ and the two connected components of $Q_1+E+\ti Q_1$ to the point $q_1\in \ov X$ (the possibility that $q_1$ is smooth is not excluded). Let $Q_2$ be the third connected component of $Q_1+E+\ti Q_1$. The surface $X=\ov S\setminus Q_0$ is a sum of the open topological subspaces $S$ and the sum of tubular neighbourhoods of $C+Q_1$ and $\ti C+\ti Q_1$, which are all simply connected. By the van Kampen theorem $X$ is simply connected. Let $\ov X\To \ov X'$ be the contraction of $Q_2$ to a cyclic singular point $q_2\in X'$. Then $X'=\ov X'\setminus Q_0$ is simply connected and we have $\rho(\ov X')=\#Q_0$. Since the components of $D$ are independent in $\NS(\ov S)$, the components of $Q_0$ are independent in $\NS(\ov S)$, hence the components of $Q_0$ are independent in $\NS(\ov X')$. It follows that they generate $\NS(\ov X')$, so $X'$ is affine by an argument by Fujita \cite[2.4(3)]{Fujita-noncomplete_surfaces}. It follows that $b_3(X')=b_4(X')=0$ and $H_2(X',\Z)$ has no torsion. Since $\chi(X')=1$, we have $b_2(X')=b_1(X')=0$, so $X'$ is simply connected and $\Z$-acyclic, hence contractible. Moreover $\kappa(X')=\kappa(\ov S\setminus Q_0)\leq \kappa(S)=-\8$. Because $q_2\in X'$ is a non-trivial cyclic singularity, by \cite[1.1, 3.1]{KR-Contractible_surfaces} $X'\setminus \Sing X'$ has a $\C^1$-fibration. Then $Y$ has a $\C^1$-fibration; a contradiction.
\end{proof}

\bprop\label{lem:Y minimal} If $Y$ has no $\C^1$-fibration then the pair $(\ov S, Q)$ is almost minimal.\eprop

\begin{proof} By  \ref{prop:gamma>1} and \ref{lem:Psi_properties} $Q$ snc-minimal. Let $(\ov{Y}',T')$  be an almost minimal model of $(\ov S,Q)$. $\ov{Y}'$ is obtained from $\ov{S}$ by a sequence of birational morphisms $$\pi_i : \ol{Y}_i \to \ol{Y}_{i+1},\ \  \ol{S}=\ol{Y}_0 \to \ol{Y}_1 \to \cdots \to \ol{Y}_\ell=\ol{Y}'.$$ Let $T_i=(\pi_{i-1})_*(T_{i-1})$, $T_0=Q$, $T'=T_\ell$. Let $Y_i=\ol{Y}_i\setminus T_i$. For every $i$ there exists a $(-1)$-curve $C_i \nsubseteq T_i$ such that $\pi_i : \ol{Y}_i \to \ol{Y}_{i+1}$ is the   {\em snc-minimalization} of $C_i+T_i$. Finally, for the almost minimal model $(\ol{Y}',T')$, the negative part $(K_{\ol{Y}'}+T')^-$ coincides with the bark $\Bk T'$ if $\kappa(Y)\geq 0$. The contractions in this process involve only  curves (or their images) contained in the support of $(\ks+Q)^-$.  We put
$$e(\ol{Y_i},T_i)=\chi(\ol{Y_i}\setminus T_i)+\#\{\mbox{connected components of $T_i$}\}.$$ Relying on the theory of peeling \cite[2.3.6]{Miyan-OpenSurf}, which gives a description of curves $C_i$, we find that $e(\ov Y_{i+1},T_{i+1})=e(\ov Y_i,T_i)-1.$ Hence $e(\ov Y',T')=e(\ov S,Q)-\ell=3-\ell.$

Suppose that $(\ov S, Q)$ is not almost minimal, i.e.\ $\ell\geq 1$. If all connected components of $T'$ are of quotient type then the intersection matrix of $Q+C_0$ is negative definite and has rank $\#Q+1=\rho(\ov Y')$, which contradicts \ref{lem:Hodge lemma}. Thus $T'$ contains a connected component which is not of quotient type.

Suppose $\kappa(Y)=-\8$. There does not exist a $\PP^1$-fibration of $Y$, because then $E$ would be a smooth fiber, and this is impossible by \ref{lem:eps_and_gamma}(iii). Because $Y$ has no $\C^1$-fibration, by \cite{MiTs-PlatFibr} $T'$ consists of two disjoint forks, exactly one of which is of quotient type and $Y'\cong(\C^2-\{0\})/ G$, where $G$ is a finite group. Hence $\chi(Y')=0$ and therefore $e(\ov Y',T')=2$.  It follows that  $\ell=1$. But then two connected components of $Q$ do not meet $C_0$, hence one of the connected components of $T'$ is a chain; a contradiction.

Thus $\kappa(Y)\geq 0$. Let $k$ be the number of connected components of $T'$ which are of quotient type and let $G_j$ be the local fundamental groups. The resulting quotient points are singular because $T'$ is snc-minimal, hence $|G_j|\geq 2$. Let $u$ be the number of connected components of $T'$. We have $\ell\geq 1$ and, by the argument above, $k\leq u-1$. By \ref{lem:BMY} $$\chi(Y')+\frac{k}{2}\geq \chi(Y')+\sum\limits_{i=1}^k \frac{1}{|G_j|}\geq 0.$$ We have $\chi(Y')=e(\ov Y',T')-u=3-\ell-u$. We obtain $3-\ell -u +\frac{k}{2}\geq 0,$ so $3\geq \ell+u-\frac{k}{2}\geq \ell +1+\frac{k}{2}$, hence $k\leq 2$ and $u\leq 3$. Suppose $\chi(Y_{i+1})>\chi(Y_i)$ for some $i$. This is possible only if $C_i$ meets two connected components of $T_i$ and contracts to a smooth point together with these connected components. By \ref{lem:log-exc_curves_on_Y} one of these connected components contains the image of $Q_0$. But then all connected components of $T'$ are of quotient type; a contradiction.

It follows that $\chi(Y')\leq \chi(Y)=-1$, hence $k=2$. Then $|G_1|=|G_2|=2$, $\ell=1$ and $u=3$. Also, $\chi(Y')=-1=\chi(Y)$. The latter implies that $C_0$ meets two connected components of $Q$. Suppose that $C_0\cdot Q_0=0$. Then, since $Q_1+E+\ti Q_1+C_0$ is contained in $\Supp (\ks+Q)^-$, the intersection matrix of $Q_1+E+\ti Q_1+C_0$ is negative definite, which implies that the intersection matrix of $Q_0$ is not negative definite. Hence $Q_0$ is not a contractible connected component of $T'$. Since $k=2$, $C_0$ together with two connected components of $Q_1+E+\ti Q_1$ contracts to a quotient singularity; it is impossible by \ref{lem:log-exc_curves_on_Y}. Thus $C_0$ meets $Q_0$ and one of connected components of $Q_1+E+\ti Q_1$. Since $|G_1|=|G_2|=2$, the remaining two connected components are $(-2)$-curves. By \ref{lem:BMY} $((K_{\ov Y'}+T')^+)^2=0$, so $\kappa(Y)\leq 1$. We have $\gamma\neq 2$, otherwise $\kappa(Y)=-\8$ by \ref{lem:k(Y)}.

Thus $C_0$ meets $Q_0$ and $E$ and we have $d(Q_1)=c_h=2$, $d(\ti Q_1)=\ti c_{\ti h} =2$. By \eqref{eq:1} and \eqref{eq:2} $4$ divides $d^2-\gamma$ and $2$ divides $d$, hence $4$ divides $\gamma$. Suppose that $\gamma=8$. Then $\varepsilon=0$ and by  \ref{lem:eps_and_gamma}(v)  $D+E$ has two $(-2)$-tips $Q_1, \ti Q_1$ and no other tip. Hence $D$ is a chain  so $Q_0$ is a chain. Then $Q_0$ is contained in $\Supp(\ks+Q)^-$ and $Q+C_0$ is contained in $\Supp(\ks+Q)^-$, hence the intersection matrix of $Q+C_0$ is negative definite, which again contradicts \ref{lem:Hodge lemma}.

Thus $\gamma=4$. By \ref{prop:basic_inequality} $\varepsilon\leq 2$. We have $t=2$, so by \ref{lem:k(Y)} $2\ks+Q\geq 0$. Also, $Q_0+C_0+E$ is not of quotient type and $\kappa(Y)=0,1$. We write $T'=Z+Q_1+\ti Q_1$ where $Z$ is the image of $Q_0+C_0+E$ in $\ov Y'$. Note that since $Z$ is not negative definite, it cannot be a chain. Indeed, otherwise it contains a component of non-negative self-intersection, which would imply that $Y'$, and hence $Y$, is $\PP^1$- or $\C^1$-fibered. Let $\cal P=(K_{\ov Y'}+T')^+$. We have $2K_{\ov Y'}+T'\geq 0$ and $\cal P\cdot (K_{\ov Y'}+T')=\cal P^2=0$. Since $2K_{\ov Y'}+2T'\geq T'$, we obtain $0\geq \cal P\cdot T'$, i.e.\ $\cal P\cdot V=0$ for every irreducible component $V$ of $T'$. By \cite[8.8]{Fujita-noncomplete_surfaces} either $Z$ is a fork $H+R_1+R_2+R_3$ where $H$ is a branching component and $R_1,R_2, R_3$ are the maximal twigs or $Z$ is a rational tree which consists of a nonzero chain and two $(-2)$-curves attached to each tip of the chain (four $(-2)$-curves in total). We have $K_{\ov Y'}\cdot (K_{\ov Y'}+T')=\ks\cdot(\ks+Q+C_0)+m=3-\varepsilon+m\geq m+1$, where $m$ is the number of outer (with respect to $Q_0+C_0+E$) blow-downs in $\ov S\rightarrow\ov Y'$. From this $-4+\Bk^2 Z=\Bk^2 T'=(K_{\ov Y'}+T')^2=m-\varepsilon-3\geq m-5$, hence $\Bk^2 Z\geq m-1$. Then $m=0$ and $0>\Bk^2 Z=1-\varepsilon$, hence $\varepsilon=2$ and $\Bk^2 Z=-1$. But then $Z$ cannot have four $(-2)$-tips, hence $Z$ is a fork. From the description of the fork in loc. cit. we get $\frac{1}{d(R_1)}+\frac{1}{d(R_2)}+\frac{1}{d(R_3)}=1$, so $(d(R_1), d(R_2), d(R_3))=(3,3,3)$ or $(2,4,4)$ or $(2,3,6)$. Since $\Bk^2 Z=-1$ one checks easily that the maximal twigs of $Z$ are tips, so $\#Z=4$. We have $\rho(\ov S)=\#(Q_0+C_0)$, so $\rho(\ov Y')=\#T'=6$, hence $K_{\ov Y'}^2=4$ by the Noether formula. Because $K_{\ov Y'}\cdot (K_{\ov Y'}+T')=1$, we get $K_{\ov Y'}\cdot Z=K_{\ov Y'}\cdot T'=-3.$ Since $\sum\limits_{i=1}^3K_{\ov Y'}\cdot R_i>0$ we get $K_{\ov Y'}\cdot H\leq -4$, hence $H^2\geq 2$.  Now since $m=0$, the blowing ups in the reverse of $\ov S\rightarrow \ov Y'$ are inner, i.e.\ we blow up over $H\cap R_i$ for some $i$. Moreover, we blow up only once on $H$, because the connected component of $Q$ containing $E$ consists of $E$ only. So the proper transform of $H$ in $\ov S$ is a positive curve contained in $D$; a contradiction by \ref{lem:Psi_properties}(d).
 \end{proof}

\blem\label{lem:Y_geometry} If $\kappa(Y)=-\8$ then $Q_0$ is of quotient type or $Y$ has a $\C^1$-ruling with no base points on $\ov S$, $Q_0$ is branched and has a maximal twig of type $[(2)_{\gamma-1}]$. If $\kappa(Y)\geq 0$ then $$\frac{1}{\ti c_h}+\frac{1}{\ti c_{\ti h}}+\frac{1}{\gamma}+\frac{1}{\Gamma(Q_0)}\geq 1.$$  \elem

\begin{proof} If $\kappa(Y)\geq 0$ then $((K_{\ov S}+Q)^+)^2\geq 0$, so the above inequality follows from \ref{lem:BMY}. Assume $\kappa(Y)=-\8$ and $Y$ is not $\C^1$-ruled. If $Y$ is $\PP^1$-ruled then by \ref{lem:rulings} $\nu=\Sigma_Y+1\geq 1$, so $Q$, being snc-minimal, contains a $0$-curve. But the latter is impossible by \ref{lem:Psi_properties}(d), hence $Y$ is not $\PP^1$-ruled. By \ref{lem:Y minimal} $(\ov S,Q)$ is almost minimal, so after the contraction of connected components of $Q$ which are of quotient type it becomes a log del Pezzo surface of rank one. Since $Q$ has more than two connected components, by \cite[2.5.1]{Miyan-OpenSurf} the resulting del Pezzo is closed, i.e.\ all connected components of $Q$ are of quotient type. Assume $p\:Y\to B$ is a $\C^1$-fibration. Again, $\nu=0$, otherwise $Q$ would contain a $0$-curve. Let $(\ov S,\ti Q)\to (\ti S,Q)$ be a minimal modification over $Q$ such that $p$ has no base points of $\ov S$. We have $\Sigma_Y=\nu=0$, so every singular fiber has a unique component not contained in $\ti Q$, and hence, by the minimality, a unique $(-1)$-curve. The complement of this $(-1)$-curve in the fiber has at most two connected components, one of which meets the section contained in $\ti Q$. Since $\ti Q$ has four connected components, there are at least three singular fibers. This is possible only if $\ti S=S$ and the section contained in $Q$ is in fact contained in $Q_0$. In particular, $p$ has no base points on $\ov S$. Let $F_E$ be the fiber containing $E$. Since $E$ is irreducible, we have necessarily $F_E=[(2)_{\gamma-1},1,\gamma]$, so $Q_0$ contains a maximal twig of type $[(2)_{\gamma-1}]$.
\end{proof}

While in principle the divisor $Q_0$ my be complicated, this is not so if $\gamma\geq 6$.

\bcor \label{lem:Q0_quotient_for_gamma>=6} If $\gamma\geq 6$ then $Q_0$ is of quotient type. \ecor

\begin{proof} Suppose $Q_0$ is not of quotient type. From \ref{lem:k(Y)} we get that $\ovk(Y)\geq 0.$ By \ref{lem:Y minimal} $(\ov S, Q)$ is almost minimal. The log BMY inequality gives $1\leq \frac{1}{c_h}+\frac{1}{\ti c_h}+\frac{1}{\gamma}$ and the inequality is strict if $\ovk(Y)=2$.  Suppose $c_h=\ti c_{\ti h}=2$. Equations \eqref{eq:1} and \eqref{eq:2} imply that $4$ divides $\gamma$, hence $\gamma=8$. Then $\varepsilon=0$ and $e(D+E)\leq 1$ by \ref{lem:eps_and_gamma}(v). Since $Q_1$ and $\ti Q_1$ are $(-2)$-tips of $D+E$, $D+E$ has no more tips, so $D$ is a chain. But then $Q_0$ is a chain, hence of quotient type; a contradiction. Therefore $\max\{c_h,\ti c_{\ti h}\}>2$. Since $\gamma\geq 6$, the log BMY is an equality, so $\{c_h, \ti c_h\}=\{2,3\}$, $\gamma=6$ and $\cal P^2=0$. In particular, $\ovk(Y)=0,1$. Say $\ti c_2=2$. Then $\Bk^2 \ti Q_1=-2$ and $d(Q_1)=c_h=3$.

By \ref{lem:k(Y)} $2\ks+Q\geq 0$. We put $\cal{P}=(\ks+Q)^+$ and we argue as in the proof of \ref{lem:Y minimal} that $\cal{P}\cdot Q=0$ and hence that $Q_0$ is a fork with $\delta=1$ or a tree with exactly four $(-2)$-tips. Assume the latter case. Then $\Bk^2 Q_0=-2$.  We have $-4-\varepsilon=(\ks+Q)^2=\Bk^2 Q=-4-\frac{4}{6}+\Bk^2 Q_1$, i.e.\ $\varepsilon=-\Bk^2 Q_1+\frac{2}{3}.$ Now, since $d(Q_1)=3$, $Q_1$ is either a $(-3)$-curve or a chain of two $(-2)$-curves, so $\varepsilon=2$ or $\varepsilon=2+\frac{2}{3}$. But if $\varepsilon=2$ then $\gamma\leq 5$ by \ref{prop:basic_inequality}; a contradiction.

Thus, $Q_0$ is a fork with a branching component $H$ and three maximal twigs $R_1$, $R_2$, $R_3$, such that $\delta(Q_0)=\frac{1}{d(R_1)}+\frac{1}{d(R_2)}+\frac{1}{d(R_3)}=1$. We have $-4-\varepsilon=(\ks+Q)^2=\Bk^2 Q= \Bk^2 Q_1-2-\frac{2}{3}-e(Q_0)$. By \ref{prop:basic_inequality} $\varepsilon=1$, so $1\leq \delta(Q_0)\leq e(Q_0)=\frac{7}{3}-e(Q_1)$. If $Q_1=[2,2]$ then $e(Q_1)=2$ and the latter inequality fails. Therefore, $Q_1$ is a $(-3)$-curve, hence $e(Q_1)=\frac{4}{3}$ and $\delta(Q_0)=e(Q_0)=1$. It follows that the twigs of $Q_0$ are irreducible.  Then $b_2(\ov S)=\#Q+1= 8$, so $\ks^2=2$ by the Noether formula. Since $\ks\cdot (\ks+Q)=4-\varepsilon=3$ we get $\ks\cdot Q=1$. We compute $\ks\cdot H=1-\ks\cdot \sum R_i-\ks\cdot Q_1-\ks\cdot \ti Q_1=-\ks\cdot \sum R_i$. But $\ks\cdot\sum R_i\geq 3$, so $\ks\cdot H\leq -3$, i.e.\ $H^2\geq 1$; a contradiction  in view of \ref{lem:Psi_properties}(d).
\end{proof}

\subsection{One-sided surgeries - surfaces $Y_C$ and $Y_{\ti C}$}

By \ref{cor:gamma<9} $\gamma\leq 8$. The goal of this subsection is to improve this bound to $\gamma\leq 5$. We suppose, for a contradiction, that $\gamma\geq 6$. By \ref{prop:basic_inequality} $\varepsilon \leq 1$. We introduce some notation. By $ Y_C$ (resp. $ Y_{\tilde C}$)  we denote the surface $\ov S\setminus (D+E-C)$ (resp. $\ov S\setminus (D+E-\tilde C)$). So $Y_C\setminus C=Y_{\ti C}\setminus {\tilde C}=Y$. The boundary divisor of $Y_C$ equals $Q+\tilde C$ and the boundary divisor of $Y_{\tilde C}$ equals $Q+\tilde C$. Let $Y_C'$ (resp. $Y_{\ti C}'$) be the surface obtained from $Y_C$ (resp. $Y_{\ti C}$) by contracting $Q_1$ (resp. $\ti Q_1$) to a (quotient) singular point $z$ (resp. $z'$). Recall that $d(Q_1)=c_h$ and $d(\ti Q_1)=\ti c_{\ti h}$, see \ref{rem:capacity_using_pairs}.

\blem\label{lem:no_simple_fibers_in_D}\ \benum[(a)]

    \item $D$ does not contain a component $D_0$ for which $D_0^2\geq 0$.

    \item $D$ does not contain components $D_1$, $D_2$ for which $D_1^2=D_2^2=-1$ and $D_1\cdot D_2=1$.
\eenum\elem

\begin{proof} If $j=0$ then $D'=D$ is snc-minimal and both statements are true. So by \ref{lem:j<=1} we may assume $j=1$. Suppose (a) or (b) fails. In case (a) blow up on $D_0$ until it becomes a $0$-curve; denote it by $F_\8$. In case (b) put $F_\8=D_1+D_2$. In both cases $L_F\cdot F_\8=0$ and $E\cdot F_\8\leq 1$ (note that $E$ meets $D$ only in $C$ or $\ti C$ and we have $C\cdot \ti C=0$). Thus $L_F$ is contained in a fiber of the $\PP^1$-fibration given by $|F_\8|$ and $E\cdot L_F\leq E\cdot F_\8\leq 1$. By \ref{lem:tangents} this is a contradiction.
\end{proof}

\blem With the above notation we have:\benum[(i)]

    \item $\kappa(Y_C'\setminus\{z\}) \geq 0$.

    \item The pair $(\ov S, Q+\tilde C)$ is almost minimal.

\eenum Analogous statements hold for $Y_{\tilde C}$ and $(\ov S,Q+C)$.
\elem

\begin{proof} (i) Suppose that $\kappa(Y'_C\setminus\{z\})=-\infty$. First of all note that $\ti C$ is a branching $(-1)$-curve in $Q_0+\ti C+\ti Q_1+E$, which implies that the latter divisor cannot be vertical for a $\PP^1$-fibration of $\ov S$. It follows that $Y'_C\setminus\{z\}$ is not $\PP^1$-ruled. Suppose it is also not $\C^1$-ruled. Since $\chi(Y_C')=1$, by the result of Miyanishi-Tsunoda \cite{MiTs-PlatFibr} $Y_C'$ is isomorphic to $\C^2// G$, where $G$ is a small non-abelian subgroup in $GL(2,\C)$. But $z$ is a cyclic quotient singularity, so we come to a contradiction. So there is a $\C^1$-fibration $g\: Y_C\setminus Q_1\To \PP^1$. Let $X\to \ov S$ be a minimal modification over $Q+\ti C$, such that $g$ extends to a $\PP^1$-fibration $G\: X\To \PP^1$. From Lemma \ref{lem:rulings} we get $\nu>0$. Let $G_0$ be a fiber contained in $X\setminus Y_C$. If $G_0$ is irreducible then the image of $G_0$ in $Q+\ti C$ is a nonnegative curve; a contradiction by \ref{lem:no_simple_fibers_in_D}. Hence $G_0$ contains a $(-1)$-curve $B'$, which, by the minimality of the modification, is a branching component in $X\setminus Y_C$. Since $B'$ is not a branching component of $G_0$, it meets a horizontal component of the boundary. But the latter is a section, so the multiplicity of $B'$ in $G_0$ equals $1$ and hence $B'$ meets only one component of $G_0$. Then $B'$ is not a branching component in $X\setminus Y_C$; a contradiction.

(ii) Put $Q_2=Q_0+\ti C+\ti Q_1+E=Q+\ti C-Q_1$. Since $\#(Q_1+Q_2)=\rho(\ov S)$, \ref{lem:Hodge lemma} implies that the intersection matrix of $Q_2$ is not negative definite. By \ref{lem:Psi_properties}(d) $Q$ is snc-minimal, so $Q_1+Q_2$ is snc-minimal. By (i), $\kappa(Y'_C\setminus\{z\})\geq 0$. Suppose $(\ov S,Q_1+Q_2)$ is not almost minimal. We consider the proces of obtaining the almost minimal model $\pi_{\ell}\circ\ldots\circ \pi_1\:(\ov S,Q_1+Q_2)\to (\ov Z,T)$ as in the proof of \ref{lem:Y minimal}. The divisors $Q_1$ and $Q_2$ are the connected components of the boundary of $Y'_C\setminus\{z\}$, so because $Q_2$ cannot be contained in a divisor of quotient type, $T$ has at most one connected component of quotient type and we see that the Euler characteristic of the open part does not increase in this process. Since $\chi(\ov S\setminus(Q_1+Q_2))=0$, the log BMY inequality implies that $\chi(\ov Z\setminus T)\geq 0$, so in fact the Euler characteristic of the open part does not change in the process. Since $e(\ov Y_i,T_i)$ drops, it follows that $T$ is connected and $\ell=1$, i.e.\ the process has exactly one step. Because $T$ is not of quotient type, the log BMY inequality implies also that for $\cal P=(K_{\ov Z}+T)^+$ we have $\cal P^2=0$. In particular, $Y'_C\setminus\{z\}$ is not of log general type.

Let $L\not\subset Q_1+Q_2$ be the $(-1)$-curve on $\ov S$ witnessing the non almost-minimality of $(\ov S,Q_1+Q_2)$. Since $T$ is connected and not of quotient type, we have either $L\cdot Q_2=L\cdot Q_1=1$ or $L\cdot (Q+\ti C-Q_1)=0$, $L\cdot Q_1=1$ and $L+Q_1$ contracts to a smooth point. We have $\ks\cdot (\ks+Q_1+Q_2+L)=\ks\cdot(\ks+D+E)=2-\varepsilon$, so $K_{\ov Z}\cdot (K_{\ov Z}+T)\geq 2-\varepsilon$ and $(K_{\ov Z}+T)^2\geq -\varepsilon\geq -1$. We obtain $\Bk^2 T=(K_{\ov Z}+T)^2\geq -1$. But $T$ is a non-empty rational tree, so $\Bk T\neq 0$, hence $\Bk^2 T$ is a negative integer, i.e.\ $\Bk^2 T=-1$. Then $K_{\ov Z}\cdot (K_{\ov Z}+T)=1$ and $\varepsilon=1$. From Riemann-Roch $2K_{\ov Z}+T\geq 0$. We argue as in the proof of \ref{lem:Y minimal}, case $\gamma=4$, that $T$ is not a chain and $\cal P\cdot T=0$. Hence again by Fujita's classification $T$ is a fork with a branching component $H$. However, then the intersection matrix of $T-H$ is negative definite, so because $\#(T-H)=\rho(\ov Z)$, we get a contradiction with \ref{lem:Hodge lemma}.
\end{proof}

\blem\label{lem:delta-s-inequality} Let $(\ov X,T+R)$ be a smooth almost minimal pair, such that $T$ is a (connected) rational tree of non-quotient type and $R$ is a rational chain disjoint from $T$. Let $T_1,\dots, T_s$ be the maximal twigs of $T$. Assume $2K_{\ov X}+T+R\geq 0$ and put $\cal P=(K_{\ov X}+T+R)^+$ and $\delta(T)=\sum\limits_{i=1}^s\frac{1}{d(T_i)}$. Then $2\cal P^2+\delta(T)\geq s-2$.
\elem

\begin{proof} Put $\cal P=(K_{\ov X}+T+R)^+=K_{\ov X}+T+R-\Bk T-\Bk R$.  Since $\ovk(\ks +T+R)\geq 0$, $R$ and $T_i$'s are admissible, so are contained in the support of $\Bk (T+R)$. Since $\cal P$ is nef and $2K_{\ov X}+T+R$ is effective, we get $$\cal P\cdot T\leq \cal P\cdot (2K_{\ov X}+R+T)+\cal P\cdot T=2\cal P\cdot (K_{\ov X}+T+R)-\cal P\cdot R=2\cal P^2.$$ By the properties of barks (see the formula for the coefficients of components in $\Bk$ in \cite[??]{Miyan-OpenSurf}) and of the Zariski decomposition we have $\cal P\cdot R=0$ and $$\cal P\cdot T=(K_{\ov X}+T+R)\cdot T-\Bk T\cdot T=-2-\sum_{i=1}^s(\frac{1}{d(T_i)}-1)=s-2-\delta(T).$$
\end{proof}

\blem\label{lem:delta-eps-inequality} If $\varepsilon\leq 1$ or $\gamma\geq 5$ then the following inequalities hold: \benum[(i)]

\item $s-2-\frac{6}{c_h}\leq \delta(Q-Q_1+\ti C)\leq e(Q-Q_1+\ti C)\leq 1+\varepsilon -\frac{d'(Q_1)+d''(Q_1)-1}{c_h},$

\item $s-3-\varepsilon\leq-\frac{d'(Q_1)+d''(Q_1)-7}{c_h}$,
\eenum where $d'(Q_1)$, $d''(Q_1)$ denote determinants of the chain $Q_1$ with tips removed, $\delta$ and $s$ are defined as in \ref{lem:delta-s-inequality} with $T=Q+\ti C$. Analogous statements hold for the surface $Y_{\ti C}$.
\elem

\begin{proof} We have $\ks\cdot (\ks+Q+\ti C)=3-\varepsilon$ and $p_a(Q+\ti C)=-1$, so by Riemann-Roch $h^0(2\ks+Q+\ti C)\geq 2-\varepsilon$. So if $\varepsilon\leq 2$ then $2K_X+Q+\ti C\geq 0$. On the other hand, if $\gamma\geq 5$ and $\varepsilon\geq 2$ then $\varepsilon=t=2$ by \ref{prop:basic_inequality}, so by \ref{lem:k(Y)} $2K_X+Q+\ti C\geq 2K_X+Q\geq 0$. Applying \ref{lem:delta-s-inequality} to $(\ov X,T+R)=(\ov S,Q+\ti C)$ we have $s-2-2\cal P^2\leq \delta$. Since $\chi(Y_C\setminus\{z\})=0$, \ref{lem:BMY} gives $\cal P^2\leq \frac{3}{d(Q_1)}=\frac{3}{c_h}$. We have $-1-\varepsilon=(\ks+Q+\ti C)^2=\cal P^2+\Bk^2 (Q-Q_1+\ti C)+\Bk^2 Q_1=\cal P^2-e(Q-Q_1+\ti C)-e(Q_1)$ and, since $Q_1$ is an admissible chain, $e(Q_1)=\frac{1}{d(Q_1)}(d'(Q_1)+d''(Q_1)+2)$. Then $e(Q-Q_1+\ti C)\leq 1+\varepsilon-\frac{1}{c_h}(d'(Q_1)+d''(Q_1)-1)$. This gives (i) and hence (ii).
\end{proof}

\blem\label{lem:c1B-p1>=2} If $\ti j\geq 1$ then $c_1-\ti p_1\geq 2$. If $\ti j>1$ then $c_1-\ti c_1\geq 2$. \elem

\begin{proof}
After blowing twice on $\lambda$, the intersection of proper transforms of $\bar U$ and the line tangent to $\ti \lambda$ is $d-\ti c_1-\ti p_1=c_1-\ti p_1$ in case $\ti j=1$ and $d-\ti c_1-\ti c_1=c_1-\ti c_1$. Because $\bar U$ does not admit a good asymptote, the intersection is bigger than $1$.
\end{proof}

 \blem\label{lem:Q0_chain_for_gamma>5} If $\gamma\geq 6$ then $Q_0$ is a chain. \elem

\begin{proof} By \ref{lem:k(Y)} $\kappa(Y)\geq 0$. By \ref{prop:basic_inequality} $\varepsilon\leq 1$. It is now more convenient to treat $\lambda$ and $\ti \lambda$ in a symmetric way, so for the needs of this proof we temporarily cancel the assumption that $j\leq \ti j$. Suppose that $Q_0$ is not a chain.

\bcl $\{c_h,\ti c_{\ti h}\}=\{2,3\}$ or $\{2,4\}$. \ecl

\begin{proof} By \ref{lem:Q0_quotient_for_gamma>=6} $Q_0$ is of quotient type hence it is a contractible fork. The local fundamental group $\Gamma(Q_0)$ of the associated singular point is small and non-abelian, hence its order is at least $8$. We have $d(Q_1)=c_h$ and $d(\ti Q_1)=\ti c_{\ti h}$. By \ref{lem:BMY} $\frac{1}{c_h}+\frac{1}{\ti c_{\ti h}}\geq 1-\frac{1}{6}-\frac{1}{8}=\frac{17}{24}$, so $\min\{c_h,\ti c_{\ti h}\}=2$ and $\max\{c_h,\ti c_{\ti h}\}\leq 4$. Suppose $c_h=\ti c_{\ti h}=2$. Then equations \eqref{eq:1} and \eqref{eq:2} imply that $4$ divides $\gamma$, hence $\gamma=8$. Then $\varepsilon=0$ and $e(D+E)\leq 1$ by \ref{lem:eps_and_gamma}(v). Since $Q_1$ and $\ti Q_1$ are $(-2)$-tips of $D+E$, $D+E$ has no more tips, so $D$ is a chain. But then $Q_0$ is a chain; a contradiction. Thus $\{c_h,\ti c_{\ti h}\}=\{2,3\}$ or $\{2,4\}$.
\end{proof}

Because $Q_0$ is a fork, say, $\ti T_1$ is a branching component of $Q_0$ and $T_1$ is not. We write $Q_0=\ti T_1+R_1+R_2+R_3$, where $R_1$ is the twig of $Q_0$ containing the image of $L'_\infty$ and $R_2$ is the twig meeting $\ti C$. We have either $h=1$ or $h=2$ and $p_2=1$.

\bcl $\ti h\leq 3$. \ecl

\begin{proof} Suppose $\ti h\geq 4$. Then $R_2$ has at least three irreducible components, among them  a $\leq (-3)$-curve. Then $d(R_2)>5$, which implies that $Q_0$ is a fork of type $(2,2,n)$, hence $R_1=[2]$ and $R_3=[2]$. A determinant of a $(2,2,n)$-fork is $4(n(b-1)-d'(R_2))$, where $-b\leq -2$ is the self-intersection of the branching component and $d'(R_2)$ is the discriminant of $R_2$ with the last component removed. Since $Q_0$ is not a $(-2)$-fork, $d(Q_0)\geq 8$, hence $|\Gamma(Q_0)|\geq 16$ and the BMY inequality gives $\{c_h,\ti c_{\ti h}\}=\{2,3\}$. Since $R_1$ is irreducible, we have $h=1$ and $p_1=1$. In particular, $c_1\leq 3$. By \ref{lem:c1B-p1>=2} $\ti j=0$, because otherwise $c_1\geq \ti c_2+2\geq 4$. Then $\Psi=\id$, so since $R_1$ is irreducible, $j=0$, hence $R_1^2\leq -3$; a contradiction.
\end{proof}

\bcl $j,\ti j\geq 1$. \ecl

\begin{proof}
We have $j+\ti j =2+\varepsilon+\gamma-(h+\ti h)\geq 8-5=3$. Suppose that $\ti j=0$ or $j=0$. Then $\Psi=\id$ and since $j+\ti j\geq 3$, $R_1$ has at least four components. But $R_1$ contains a $\leq (-3)$-curve, so $d(R_1)>5$ and hence $Q_0$ is of type $(2,2,n)$. Then $R_2=[2]$, so $\ti h=2$ and $\ti p_2=2$. Then $\ti c_2\geq \ti p_2+1=3$, so $\ti c_2\in \{3,4\}$ and $c_h=2$. Because $\ti c_2$ and $\ti p_2$ are coprime, $\ti c_2=3$. We have $d(R_3)=\frac{\ti c_1}{\ti c_2}$, so since $R_3=[2]$, $\ti c_1=6$ and hence $\ti p_1=\ti c_2=3$. Since $c_h=2$ and $R_3=[2]$, $e(D+E)>1$, so $\varepsilon=1$ and hence $\gamma\leq 5+t=6$, i.e.\ $\gamma=6$. The total contribution of $E$, $Q_1$, $R_2$ and $R_3$ to $e(Q-\ti Q_1+C)$ is $\frac{1}{6}+\frac{1}{2}+\frac{1}{2}+\frac{1}{2}=\frac{5}{3}$, so since \ref{lem:delta-eps-inequality}(i) gives $e(Q-\ti Q_1+C)\leq \frac{5}{3}$, we have $h=1$. Then $(c_1,p_1)=(1,2)$, so \eqref{eq:1} gives $j+3\ti j=8$. But $j+\ti j=6$, so both $j$ and $\ti j$ are nonzero; a contradiction.
\end{proof}

\bcl $(h,\ti h)\neq (2,2)$. \ecl

\begin{proof}
Suppose $(h,\ti h)=(2,2)$. Then $j+\ti j=\varepsilon+\gamma-2$ and $t\leq 1$. By \ref{lem:j<=1} and by the previous claim $\min\{j,\ti j\}=1$.

Suppose $j=1$. Then $\ti j\geq 3+\varepsilon$. Let  $A$  be the component of $D'$ produced by the pair $\binom{c_1}{c_1} $. We have at least $3+\varepsilon$ successive contractions in $\Psi$ affecting $A$. Hence $A^2\leq -4-\varepsilon$, i.e.\ we blow up at least $3+\varepsilon$ times on $A$, otherwise $A$ becomes a nonnegative curve in $D$, which is impossible by \ref{lem:Psi_properties}(d). It follows that $R_1$ begins with $1+\varepsilon$ $(-2)$-curves and contains at least $2$ other components ($\Psi$ does not contract $T_1$ and the twig produced by $\binom{c_1}{p_1}$), hence its contribution to $e(D+E)$ is bigger than $\frac{1+\varepsilon}{2+\varepsilon}$. Since $p_2=1$, $Q_1$ consists of $(-2)$-curves, hence $e(D+E)>1$. It follows from \ref{lem:eps_and_gamma}(v) that $\varepsilon=1$. If $d(R_1)>5$ then $Q_0$ is of type $(2,2,n)$, so $R_2=[2]$ and hence $\ti p_2=2$. But $\{c_h,\ti c_{\ti h}\}=\{2,3\}$ or $\{2,4\}$, so in the latter case $\ti c_2=3$ and $c_2=2$ and we get $e(D+E)>\frac{2}{3}+\frac{1}{2}+\frac{1}{2}+\frac{1}{3}=2$, which contradicts \ref{lem:eps_and_gamma}(v). Thus $d(R_1)\leq 5$, hence $R_1=[2,2,2,2]$. Also, $Q_0$ is a fork of type $(2,3,5)$ or $(2,2,n)$, so $\{d(R_2), d(R_3)\}=\{2,3\}$ or $\{2,2\}$. It follows that $e(Q-\ti Q_1+C)\geq \frac{2}{3}+\frac{1}{2}+\frac{1}{d(R_2)}+\frac{1}{d(R_3)}+\frac{1}{\gamma}>2$; a contradiction by \ref{lem:delta-eps-inequality}(i).

Thus $\ti j=1$. Then $ j\geq 3+\varepsilon$. Let $\ti A$  be the component of $D'$ produced by the pair $\binom{\ti c_1}{\ti c_1} $. By the argument above $\ti A^2\leq -4-\varepsilon$, so we blow up at least $3+\varepsilon$ times on $\ti A$. In particular, $R_3$ begins with $1+\varepsilon$ $(-2)$-curves and it has at least $1$ component more. As above, we infer that $\varepsilon=1$, $\gamma=6$ and $j\geq 4$. Since $h>1$, $R_1$ has at least $2$ components ($\Psi$ does not contract $T_1$), so it is not a $(-2)$-curve. Since $Q_0$ is of quotient type, it follows that $d(R_3)\leq 5$, hence $R_3=[2,2,2]$ or $R_3=[2,2,2,2]$. Then $Q_0$ is a fork of type $(2,3,4)$ or $(2,3,5)$, so $d(R_2)\leq 3$. It follows that $e(Q-\ti Q_1+C)\geq \frac{1}{2}+\frac{1}{2}+\frac{1}{d(R_2)}+\frac{3}{4}+\frac{1}{\gamma}>2$; a contradiction by \ref{lem:delta-eps-inequality}(i).
\end{proof}

\bcl $h=1$. \ecl

\begin{proof}
Suppose $h\neq 1$. Then, by the previous claims $h=2$ and $\ti h=3$. Suppose that $\ti c_2>\ti p_2$. Then  $R_2$ has at least two components and a $\leq(-3)$-curve between them, so $d(R_2)\geq 5$. Since $\#R_1\geq 2$, $Q_0$ is a fork of type $(2,3,5)$, hence $R_2=[2,3]$, $R_1=[2,2]$ and $R_3=[2]$. It follows that $Q-\ti Q_1+C$ has at least four $(-2)$-tips, hence $e(Q-\ti Q_1+C)\geq 2$; a contradiction with \ref{lem:delta-eps-inequality}(i).

Therefore, $\ti c_2=\ti p_2$. Then $R_2$ contains a $\leq (-3)$-curve. Since $\#R_1>1$, $R_3=[2]$, so $\frac{\ti c_1}{\ti c_2}=2$. Also, denoting by $\ti A$ be the curve in $D'$ produced by the $\ti j$-th pair $\binom{\ti c_1}{\ti c_1}$ we have $\ti A^2=-3$.  Since $\ti c_2=\ti p_2$, $\ti T_1'$ is a $(-2)$-curve, so because $\ti T_1^2\leq -2$, $\Psi$ does not contract $\ti A$. It follows that $\ti j\geq 2$, otherwise $j=\gamma+\varepsilon-3-\ti j\geq 2$ and $\Psi$ contracts $\ti A$. Then $j=1$. Moreover, if $\Psi$ contracts the curve $A$ produced by the pair $\binom{c_1}{c_1}$ then it also contracts $\ti A$. Thus $A$ is not contracted by $\Psi$. We get $\#R_1\geq 3$. Since $R_2$ is not a $(-2)$-curve, $Q_0$ is of type $(2,3,4)$ or $(2,3,5)$. Then $R_1=[2,2,2]$ or $[2,2,2,2]$ and $d(R_2)=3$. The latter gives $R_2=[3]$, so $\ti c_2=2$. The divisor $Q-\ti Q_1+C$ has at least three $(-2)$-tips, so $e(Q-\ti Q_1+C)>\frac{3}{2}$. But by \ref{lem:delta-eps-inequality}(i) $e(Q-\ti Q_1+C)\leq \frac{3}{2}$; a contradiction.
\end{proof}

Since $h=1$, Claim 1 gives $c_1\leq 4$. Since $\ti j\geq 1$, by \ref{lem:c1B-p1>=2} we have $c_1\geq \ti p_1+2\geq \ti c_2+2\geq 4$, so Claim 1 gives $c_1=4$ and $\ti c_{\ti h}=2$. Then $\ti p_1=\ti c_2=2$ and by \ref{lem:c1B-p1>=2} $\ti j=1$, hence $j=\gamma+\varepsilon-\ti h\geq 6-\ti h+\varepsilon$. Because $R_2\neq 0$, we have $\ti h\geq 3$. Thus $\ti h=3$ and $(\ti c_2,\ti p_2,\ti c_3,\ti p_3)=(2,2,2,1)$. Put $\ti k=\ti c_1/\ti c_2=\ti c_1/2$. Clearly, $\ti k\geq 2$. We have $e(Q-\ti Q_1+C)=\frac{4-p_1}{4}+\frac{1}{\gamma}+(1-\frac{1}{\ti k})+\frac{1}{3}$. By \ref{lem:delta-eps-inequality}(i) $e(Q-\ti Q_1+C)\leq \varepsilon+\frac{1}{2}$, so $\varepsilon=1$ and $(\frac{4-p_1}{c_1}+\frac{1}{\gamma}\leq \frac{1}{\ti k}+\frac{1}{6})$. But since $\varepsilon=1$, \ref{prop:basic_inequality} gives $\gamma\leq 5+t\leq 6$, so $\gamma=6$. Then $p_1=3$ and $\ti k\leq 4$. Also, $j=4$. Then \eqref{eq:1} gives $\ti k=8$; a contradiction.
\end{proof}

Since by \ref{prop:gamma>1} $\gamma\geq 2$, the following proposition completes the proof of \ref{thm:main_result}(2).

\bprop\label{prop:gamma<=5} $\gamma\leq 5.$ \eprop

\begin{proof} As in the proof of \ref{lem:Q0_chain_for_gamma>5} we temporarily cancel the assumption that $j\leq \ti j$. Instead we may, and shall, assume $h\leq \ti h$. Suppose that $\gamma\geq 6$. By \ref{lem:Q0_chain_for_gamma>5} $Q_0$ is a chain, so $\ti h\leq 2$. By \ref{prop:basic_inequality} $\varepsilon\leq 1$.

Suppose that $h=2$. Then $\ti h=2$ and $t=2$, so $e(D+E)>1$, hence $\varepsilon=1$ and $\gamma\leq 7$. We have  $d'(Q_1)+d''(Q_1)\leq 7$ by \ref{lem:delta-eps-inequality}(ii). Hence $Q_1$, and similarly $\ti Q_1$, consist of at most three $(-2)$-curves. If, say, $Q_1=[2,2,2]$ then $\frac{d'(Q_1)+d''(Q_1)-1}{c_h}=\frac{5}{4}$ and we have a contradiction with \ref{lem:delta-eps-inequality}(i), because $e(Q-Q_1+\ti C)>(1-\frac{1}{\ti c_{\ti h}}) +\frac{1}{\gamma}\geq \frac{5}{6}$. So $Q_1$ and $\ti Q_1$ consist of at most two $(-2)$-curves. If $Q_1=[2]$ and $\ti Q_1=[2]$ then from \eqref{eq:1} and \eqref{eq:2} we get that $4$ divides $\gamma$, which is impossible. Similarly, it is not possible that $Q_1=[2,2]$ and $Q_2=[2,2]$, because otherwise $c_2=\ti c_2=3$ and then we get that $3$ divides $\gamma-2$. Thus, say, $Q_1=[2,2]$ and $\ti Q_1=[2]$. If one of the two tips of $Q_0$ is a $\geq (-3)$-curve then $e(Q-Q_1+\ti C)>\frac{1}{2}+\frac{1}{6}+\frac{1}{3}=1$, while $\frac{d'(Q_1)+d''(Q_1)-1}{d(Q_1)}=1$ and we have a contradiction with \ref{lem:delta-eps-inequality}(i).

Thus both tips of $Q_0$, call them $B$ and $\ti B$, are $\leq(-4)$-curves. Let $\alpha\: \ov S\to \ov N$ be the contraction of $C+Q_1$ and $\ti C+\ti Q_1$. Put $E_0=\alpha(E)$, $Z=\alpha(D)$. It does not touch $B+\ti B$. We have $E_0^2=-\gamma+5$, $K_{\ov N}\cdot(K_{\ov N}+Z)=\ks\cdot(\ks+D)+2=1-\ks\cdot E+2=5-\gamma$, $E_0\cdot(K_{\ov N}+Z)=E_0\cdot K_{\ov N}+5=\gamma-2$. Since $E_0\cdot Z>1$, the divisor $E_0+K_{\ov N}+Z$ is effective. Because $\kappa(K_{\ov N}+Z)=-\8$, we can find a maximal $m\in \mathbb{N}_+$ such that $|E_0+m(K_{\ov N}+Z)|\neq\emptyset$. Write $$E_0+m(K_{\ov N}+Z)=\sum A_i,$$ where $A_i$'s are irreducible. By the maximality of $m$ we have $|A_i+K_{\ov N}+Z|=\emptyset$, which gives $A_i\cong \PP^1$ and $A_i\cdot Z\leq 1$ for every $i$. We have $\rho(\ov N)>2$, because $\ov N$ contains negative curves $B$ and $\ti B$. Hence replacing successively $A_i$'s having non-negative self-intersection by singular members of their linear systems we may assume that $A_i^2<0$ for every $i$. We find $$(K_{\ov N}+Z)\cdot (E_0+B+\ti B+2K_{\ov N})=E_0\cdot(K_{\ov N}+Z)-2+2K_{\ov N}\cdot(K_{\ov N}+Z)=6-\gamma\leq 0$$ and $E_0\cdot(E_0+B+\ti B+2K_{\ov N})=\gamma-9<0$. Therefore, there exists $A_{i_0}$ for which $A_{i_0}\cdot (E_0+B+\ti B+2K_{\ov N})<0$. The intersection of $B$ (respectively $\ti B$) with $E_0+B+\ti B+2K_{\ov N}$ equals $-2+B\cdot K_{\ov N}\geq 0$ (respectively $-2+B\cdot K_{\ov N}\geq 0$), so $A_{i_0}\neq B, \ti B$. Also, $A_{i_0}\neq E_0$, because $|E_0+K_{\ov N}+Z|\neq \emptyset$. Thus $A_{i_0}$ is a $(-1)$-curve and $A_{i_0}\cdot E_0<2$. Since by the definition of $\alpha$ the curve $E_0$ meets every $(-1)$-curve in $Z$ at least twice, $A_{i_0}$ is not a component of $Z$. Then $A_{i_0}\setminus Z\subset S$ is a good asymptote of $U$; a contradiction.

Thus $h=1$. Suppose $\ti h=2$. Then $d(Q_1)=c_1$, $d(\ti Q_1)=\ti c_2$ and $\ti Q_1$ is a $(-2)$-chain. Now \ref{lem:BMY} gives $\frac{1}{c_1}+\frac{1}{\ti c_2}\geq \frac{5}{6}-\frac{1}{d(Q_0)}$. By \ref{lem:j<=1} $\min\{j,\ti j\}\leq 1$.

Suppose that $j=1$. Let $A$ be the curve in $D'$ produced by the pair $\binom{c_1}{c_1}$. By \eqref{eq:h+hB} $\ti j=\gamma+\varepsilon-2\geq 4+\varepsilon$, so $L_\8'$ is a $(-1)$-curve and we have at least $4+\varepsilon$ contractions in $\Psi$ affecting $A$. By \ref{lem:Psi_properties}(d) we get $A^2\leq -5-\varepsilon$. It follows that we blow up at least $4+\varepsilon$ times on the proper transform of $A$ in the pair $\binom{c_1}{p_1}$, so $Q_1$ has at least $3+\varepsilon$ components. But \ref{lem:delta-eps-inequality}(ii) gives $1+\frac{d'(Q_1)+d''(Q_1)-7}{d(Q_1)}\leq \varepsilon$. For $\varepsilon=0$ we get $d(Q_1)+d'(Q_1)+d''(Q_1)\leq 7$, which is impossible, because $Q_1$ has at least three components. For $\varepsilon=1$ we get $d'(Q_1)+d''(Q_1)\leq 7$, which is also impossible, because now $Q_1$ has at least four components.

Suppose $\ti j=1$. Let $\ti A$ be the curve in $D'$ produced by the pair $\binom{\ti c_1}{\ti c_1}$. We have $j=\gamma+\varepsilon-2\geq 4+\varepsilon$, so $L_\8'$ is a $(-1)$-curve and we have at least $4+\varepsilon$ contractions in $\Psi$ affecting $\ti A$. By \ref{lem:Psi_properties}(d) we get $\ti A^2\leq -5-\varepsilon$. It follows that we blow up at least $4+\varepsilon$ times on the proper transform of $\ti A$ in the pair $\binom{\ti c_1}{\ti p_1}$, so $\ti R$, the twig of $D+E$ produced by the pair $\binom{\ti c_1}{\ti p_1}$ has at least $3+\varepsilon$ components and begins with at least $2+\varepsilon$ $(-2)$-curves. Therefore, its contribution to $e(D+E)$ (and to $e(Q-Q_1+\ti C)$), is bigger than $\frac{2+\varepsilon}{3+\varepsilon}$. Since $\ti Q_1$ is a $(-2)$-chain we get $e(D+E)>1$, so $\varepsilon=1$ by \ref{lem:eps_and_gamma}(v). By \ref{prop:basic_inequality} $\gamma\leq 5+t\leq 7$. We have $\#Q_0\geq \#\ti R+2\geq 6$, hence $d(Q_0)\geq 7$. Now \ref{lem:BMY} gives $\frac{1}{c_1}+\frac{1}{\ti c_2}\geq 1-\frac{1}{\gamma}-\frac{1}{d(Q_0)}>\frac{2}{3}$, so $\min\{c_1,\ti c_2\}=2$ and $\max\{c_1,\ti c_2\}\leq 5$. But since $\ti j=1$, \ref{lem:c1B-p1>=2} gives $c_1\geq \ti p_1+2\geq \ti c_2+2\geq 4$, so $\ti c_2=2$ and $c_1\in\{4,5\}$. Denoting by $e_1$ the contribution of the twig of $Q_0$ meeting $C$ to $e(Q-Q_1+\ti C)$ we get from \ref{lem:delta-eps-inequality}(i) that $e_1+\frac{d'(Q_1)+d''(Q_1)-1}{c_1}<2-\frac{1}{2}-\frac{3}{4}-\frac{1}{7}=\frac{17}{28}<\frac{2}{3}$. Since $c_1=4,5$, the latter inequality implies that $Q_1$ is irreducible. Then the tip of $Q_0$ meeting $C$ is a $(-2)$-curve. Indeed, it is not contracted by $\Psi$, its proper transform on $\ov S'$ is a $(-2)$-curve and $Q_0$ contains no curves of non-negative self-intersection, hence the tip is not touched by $\Psi$. We obtain $e_1\geq\frac{1}{2}$, hence $\frac{1}{5}\leq\frac{1}{c_1}<\frac{2}{3}-\frac{1}{2}=\frac{1}{6}$; a contradiction.

Thus, we have $\min\{j,\ti j\}=0$. Suppose $j=0$. Then $\Psi=\id$ and $\ti j=\varepsilon +\gamma-1\geq 5$. It follows that $\# Q_0\geq 8$. The component produced by the last pair $\binom{\ti c_1}{\ti c_1}$ is a $(\leq -3)$-curve. Also, $(\ti T_1)^2\leq -3$. We find $d(Q_0)\geq 53$. From \ref{lem:BMY} we get that $c_1\leq 3$. But $c_1\geq \ti p_1+2\geq \ti c_2+2\geq 4$; a contradiction.

Now suppose that $\ti j=0$. We find that $\#Q_0\geq 8$ and (because $\ti T_1$ and the tip contained in $F$ are $\leq (-3)$-curves) that $d(Q_0)\geq 43$. If $c_1=\ti c_2=2$ then $e(D+E)>1$, so $\varepsilon\geq 1$ and hence $\gamma\leq 7$. But in the latter case \eqref{eq:1} and \eqref{eq:2} give $4|\gamma$. Thus $\max\{c_1,\ti c_2\}\geq 3$. Now \ref{lem:BMY} gives $\{c_1,\ti c_2\}=\{2,3\}$ and $\gamma=6$. Then $j=5+\varepsilon$. Write $\ti c_1=\ti k\ti c_2$ and $\ti p_1=\ti l\ti c_2$. Multiplying \eqref{eq:1} by $c_1$ and subtracting  \eqref{eq:2} we get $$\ti c_2^2\ti k(\ti k-\ti l)+c_1\ti c_2\ti l=6(c_1-1)+c_1^2-c_1+\ti c_2.$$ If $(c_1,\ti c_2)=(3,2)$ we get $2\ti k(\ti k-\ti l)+3\ti l=10$, so $\ti l$ is even and hence $10\geq 2(\ti l+1)+3\ti l\geq 6+6$; a contradiction. Thus $(c_1,\ti c_2)=(2,3)$. We get $3(3\ti k(\ti k-\ti l)+2\ti l)=11$; a contradiction.

We are left with the case $h=\ti h=1$. We have now $j+\ti j=\varepsilon+\gamma\geq 6$. Suppose that $d(Q_0)>6$. Then \ref{lem:BMY} gives $\frac{1}{c_1}+\frac{1}{\ti c_1}\geq 1-\frac{1}{\gamma}-\frac{1}{d(Q_0)}>\frac{2}{3},$ so $\min\{c_1, \ti c_1\}=2$. By symmetry, we may assume that $\ti c_1=2$. Then $j=0$, because otherwise $\ti c_1\geq p_1+2$. It follows that $\#Q_0\geq 7$. Since the curve produced by the last pair $\binom{\ti c_1}{\ti c_1}$ is a $\leq (-3)$-curve, $d(Q_0)\geq 15$. Then $\frac{1}{c_1}\geq 1-\frac{1}{6}-\frac{1}{15}-\frac{1}{2}=\frac{4}{15}$, so $c_1\leq 3$. But by \ref{lem:c1B-p1>=2} we have $c_1\geq \ti c_1+2$, because $\ti j>1$; a contradiction.

Thus $d(Q_0)\leq 6.$ Then $j>0$ and $\ti j>0$, because otherwise $\#Q_0\geq j+\ti j+1\geq 7$ and $d(Q_0)\geq 8$. We may assume $c_1\geq \ti c_1$. Then \ref{lem:c1B-p1>=2} implies that $j=1$. We obtain $\ti j=\gamma+\varepsilon-1\geq 5$. Again by \ref{lem:c1B-p1>=2} $\ti c_1\geq p_1+2\geq 3$ and $c_1\geq \ti c_1+2$, so $\ti c_1\geq 3$ and $c_1\geq 5$. Then \ref{lem:BMY} gives $\ti c_1=3$ and $d(Q_0)\leq 3$. In particular, $p_1=1$. Since $C'+\ti C'$ is not touched by $\Psi$, we have $\#Q_0\geq 2$, hence $Q_0=[2,2]$. Then $\frac{1}{c_1}\geq \frac{1}{3}-\frac{1}{\gamma}$, so $c_1\leq 6$. The formulas \eqref{eq:1} and \eqref{eq:2} read as
\begin{align*}c_1+\gamma+5&=3\ti j+\ti p_1,\\
5c_1+\gamma+9&=9\ti j+3\ti p_1.
\end{align*}
We obtain $3(c_1+\gamma+5)=5c_1+\gamma+9$, so $c_1=\gamma+3\geq 9$; a contradiction.
\end{proof}

\section{\texorpdfstring{Type $(0,\ti j)$}{Type (0,j)}}

In the following three subsections we prove the following lemma.

\blem\label{prop:type_(0,0-1-2)} If $U$ is of type $(0,\ti j)$ for some $\ti j\leq 2$ then we can make a change of coordinates on $S$, after which $U$ still has separate non-simple branches at infinity, it is of type $(0,\ti m)$ for some $\ti m\in \Z$ and the degree of $U$ drops.\elem

\subsection{\texorpdfstring{Type $(0,0)$}{Type (0,0)}}Here we prove \ref{prop:type_(0,0-1-2)} in case $j=\ti j=0$.

\begin{proof}[Proof of \ref{prop:type_(0,0-1-2)}] Let $c_1=kc_2$ and $\ti c_1=\ti k\ti c_2$. Let $c_1-p_1=\beta  c_2$, $\ti c_1-\ti p_1=\ti \beta  \ti c_2$. Then $\beta, \ti \beta \geq 1$.  We have $\gamma=\gamma'$ by \ref{lem:Psi_properties}(b), so the formulas \eqref{eq:1} and \eqref{eq:2} take the form
\begin{align}
  \gamma+(\beta +k)c_2+(\ti \beta +\ti k)\ti c_2&=p_2+\dots+p_h+\ti p_2+\dots+\ti p_h,\label{eq:1_case00}\\
  \gamma+\beta kc_2^2+\ti \beta \ti k\ti c_2^2+2k\ti kc_2\ti c_2&=p_2c_2+\dots+p_hc_h+\ti p_2\ti c_2+\dots+\ti p_h\ti c_h.\label{eq:2_case00}
\end{align}

We may assume that $c_2\geq \ti c_2$. We have $\gamma\leq 5$, so \ref{prop:basic_inequality} gives $h+\ti h=\gamma+\varepsilon+2\leq 9$. We have $2k\ti kc_2\ti c_2\geq 8\ti c_2^2\geq\sum\limits_{i\geq 2}\ti p_i\ti c_i$. It follows from \eqref{eq:2_case00} that \begin{equation} \beta kc_2^2<\sum\limits_{i\geq 2}p_i c_i \label{eq:3_case00}.\end{equation} We obtain $\beta kc_2^2<(h-1)c_2^2$ so $\beta k\leq h-2\leq 6$. Let $r$ be the number of pairs $\binom{c_2}{c_2}$, i.e.\ $r$ is such that $p_{r+2}$ is the first $p_i$ smaller than $c_2$. We have $$\sum\limits_{i\geq 2} p_i c_i\leq rc_2^2+c_2p_{r+2}+\sum_{i\geq r+3}c_ip_i\leq rc_2^2+c_2(c_2-c_{r+3})+\sum_{i\geq r+3}c_ip_i\leq (r+1)c_2^2-2c_{r+3}^2+(h-r-2)c_{r+3}^2,$$ hence \begin{equation}
\sum\limits_{i\geq 2} p_i c_i\leq  (r+1)c_2^2+(h-r-4)c_{r+3}^2\label{eq:3p_case00}.
\end{equation}

Suppose $\beta=2$. Since $\beta k\leq h-2\leq 6$, we get $\beta=2$, $k=3$, $h=8$ and then consequently $\ti h=1$, $\gamma=5$, $\varepsilon=2$ and by \ref{prop:basic_inequality} $t=2$, hence $\ti p_1=p_8=1$. The equations \eqref{eq:3_case00} and \eqref{eq:3p_case00} give $6<r+1+\frac{4-r}{4}$, so $r\geq 6$. Because $h=8$ we get $r=6$ and $(c_h,p_h)=(c_2,1)$. Then \eqref{eq:2_case00} reads as $5+\ti c_1+c_2(6\ti c_1-1)=0$; a contradiction. If $r\leq k-2$ then \eqref{eq:3p_case00} and \eqref{eq:3_case00} give $4c_{r+3}^2\leq c_2^2<(h-r-4)c_{r+3}^2$ and hence $r+9\leq h\leq 9-\ti h\leq 8$, which is impossible.

Thus $\beta=1$ and $r\geq k-1$. Suppose $\ti c_1\geq c_2$. Using \eqref{eq:2_case00} we obtain that  $$7c_2^2\geq(h+\ti h-2)c_2^2\geq \sum\limits_{i\geq 2} p_i c_i+\sum\limits_{i\geq 2}^{\ti h} \ti p_i \ti c_i>kc_2^2+2kc_2\ti c_1\geq kc_2^2+2k c_2^2=3kc_2^2,$$ hence $k=2$. We have also $\gamma+\epsilon=h+\ti h-2=7$. By \ref{prop:basic_inequality} and \ref{prop:gamma<=5} $\gamma=5$, $\varepsilon=2$ and $t=2$. From \eqref{eq:3_case00} it follows that $h\geq 2$. If $\ti h\geq 2$ then $$3kc_2^2=6c_2^2<\sum\limits_{i\geq 2} p_i c_i+\sum\limits_{i\geq 2}^{\ti h} \ti p_i \ti c_i\leq 5c_2^2+c_h+\ti c_h\leq 5c_2^2+2c_2,$$ which is impossible, because $c_2\geq 2$. Therefore, $h=8$ and $\ti h=1$. Now $\sum\limits_{i\geq 2} p_i c_i\leq 6c_2^2+c_2$, which together with \eqref{eq:2_case00} gives $2c_2^2+4c_2\ti c_1<6c_2^2+c_2$. From this we get $c_2\geq \ti c_1$. Hence $c_2=\ti c_1$. Because $h=8$ and $\ti h=1$, the formulas \eqref{eq:1_case00} and \eqref{eq:1_case00} give $\gamma-\ti p_1\equiv p_8 \mod c_8$ and $\gamma\equiv 0\mod c_8$, so $c_8$ divides $\gamma=5$ and $p_8+\ti p_1$. But $t=2$, so $p_8+\ti p_1=2$; a contradiction.

Therefore, $\ti c_1<c_2$. Recall that $r-k+1\geq 0$. After blowing up on $\lambda$ according to $\binom{c_1}{p_1}\binom{c_2}{c_2}_{k-1}$ and then successively contracting $(-1)$-curves starting from $L_\infty'$ we get new coordinates on $S$ in which the type of $U$ at infinity is $(0,r-k+1)$, the branches at infinity are separated and not simple ($c_2>1$). The degree of $U$ in these new coordinates is $k\ti c_1+c_2$ and, since $\ti c_1<c_2$, it is smaller than the original degree $c_1+\ti c_1$. Indeed, $k\ti c_1+c_2<kc_2+\ti c_1\iff (k-1)\ti c_1<(k-1)c_2$.
\end{proof}

\subsection{\texorpdfstring{Type $(0,1)$}{Type (0,1)}}We prove Proposition \ref{prop:type_(0,0-1-2)} for $(j,\ti j)=(0,1)$. Put $P=\sum\limits_{i\geq 2}p_i$ and $\ti P=\sum\limits_{i\geq 2}\ti p_i$.

\begin{proof}[Proof of \ref{prop:type_(0,0-1-2)}] The formulas \eqref{eq:1} and \eqref{eq:2} read as

\begin{align}
  \gamma+2c_1+\ti c_1&=p_1+\ti p_1+P+\ti P.\label{eq:1_case01}\\
  \gamma+c_1^2+2c_1\ti c_1&=p_1c_1+\ti p_1\ti c_1+\sum\limits_{i\geq 2} p_i c_i+\sum\limits_{i\geq 2}\ti p_i  \ti c_i.\label{eq:2_case01}
\end{align}
Write $c_1=\ti p_1+\theta$. By \ref{lem:c1B-p1>=2} $\theta\geq 2$. We keep the notation from the previous section. We rewrite the formulas:
\begin{align}
  \gamma+\beta c_2+\theta+\ti c_1&=P+\ti P,\label{eq:3_case01}\\
  \gamma+\beta c_2c_1+c_1\ti c_1+\theta \ti c_1&= \sum\limits_{i\geq 2} p_i c_i+\sum\limits_{i\geq 2} \ti p_i \ti c_i.\label{eq:4_case01}
\end{align}
Suppose that $\ti c_2\geq c_2$. Then $\ti c_2(P+\ti P)\geq \sum\limits_{i\geq 2} p_i c_i+\sum\limits_{i\geq 2} \ti p_i \ti c_i$, so multiplying \eqref{eq:3_case01} by $\ti c_2$ and subtracting \eqref{eq:4_case01} we get $$\gamma\ti c_2+\beta c_2\ti c_2+\theta\ti c_2+\ti c_1\ti c_2 \geq \gamma+\beta c_2c_1+c_1\ti c_1+\theta\ti c_1,$$ and then subsequently
\begin{align*}
  \gamma\ti c_2+\beta c_2\ti c_2 &>\beta c_2c_1+\ti c_1(c_1-\ti c_2)+\theta(\ti c_1-\ti c_2),\\
  \gamma\ti c_2+\beta c_2\ti c_2 &>\beta c_2c_1+\ti c_1\theta+\theta(\ti k-1)\ti c_2,\\
  \beta c_2(\ti c_2-c_1) &>\ti c_2(\ti k\theta-\gamma+\theta(\ti k-1)).
\end{align*}
Because $c_1>\ti p_1\geq \ti c_2$, we infer $\ti k\theta -\gamma+\theta(\ti k-1)<0$. It follows that $\theta(2\ti k-1)<\gamma\leq 5$, so $\ti k<2$; a contradiction.

Thus $c_2>\ti c_2$. We now show that \begin{equation}\beta c_2 c_1=k\beta c_2^2<\sum\limits_{i\geq 2} p_i c_i.\label{eq:5_case01} \end{equation} Suppose \eqref{eq:5_case01} fails. By \eqref{eq:4_case01} $c_1\ti c_1+\theta\ti c_1<\sum\limits_{i\geq 2}\ti p_i\ti c_i\leq \ti P\ti c_2$, so by \eqref{eq:3_case01} $\ti k c_1+\ti k\theta\leq \ti P-1\leq \gamma-1+\beta c_2+\ti k(c_2-1)+\theta$. Then $0\leq 2+3-\gamma\leq \theta(\ti k-1)+\ti k+1-\gamma\leq c_2(\beta+\ti k-\ti k k)$, so $\ti k(k-1)\leq \beta\leq k-1$. But $\ti k\geq 2$; a contradiction.

We have $h+\ti h=\gamma+\varepsilon+1\leq 8$, so $(h-1)+(\ti h-1)\leq 6$ and \eqref{eq:5_case01} gives now $(\beta+1)\beta c_2^2<\sum\limits_{i\geq 2} p_i c_i\leq 6c_2^2$, so $\beta =1$.

Suppose $\ti c_1\geq c_2$. We multiply \eqref{eq:3_case01} by $c_2$. Since $c_2>\ti c_2$ we obtain that $\gamma c_2+c_2^2+\theta c_2+c_2\ti c_1\geq \gamma +c_2c_1+c_1\ti c_1+\theta\ti c_1.$   We get $$\gamma c_2> c_2^2(k-1)+\theta(\ti c_1-c_2)+\ti c_1c_2(k-1)\geq c_2^2(k-1)+\ti c_1c_2(k-1).$$ Because $c_2\geq \ti c_2+1\geq 2$, the above inequality gives $$4\geq \gamma-1\geq (k-1)(c_2+\ti c_1)\geq 4(k-1),$$ hence $k=2$, $c_2=\ti c_1=2$ and $c_1=4.$ Then $\ti p_1=1$, which gives $\theta=c_1-\ti p_1=3$. We have also $\ti h=1$. Now \eqref{eq:3_case01} and \eqref{eq:4_case01} give $\gamma+7=P=2(h-2)+1$ and $\gamma+22=4(h-2)+2$, hence $\gamma=8, h=9$; a contradiction since $h+\ti h\leq 8$.

Thus $\ti c_1<c_2$. The inequality \eqref{eq:3p_case00} holds, so we prove as in the previous section that $r\geq k-1$ and then that we may change coordinates on $S$ so that in the new coordinates $U$ the branches at infinity are non-simple and separated, the type at infinity of $U$ is $(0,r-k+1)$ and its degree equals $k\ti c_1+c_2$. Since $\ti c_1<c_2$, we get $k\ti c_1+c_2<kc_2+\ti c_1=c_1+\ti c_1$, so the degree drops.
\end{proof}

\subsection{\texorpdfstring{Type $(0,2)$}{Type (0,2)}}Here we prove Proposition \ref{prop:type_(0,0-1-2)} in the remaining case $(j,\ti j)=(0,2)$.

\begin{proof}[Proof of \ref{prop:type_(0,0-1-2)}] The formulas \eqref{eq:1} and \eqref{eq:2} read as
\begin{align}
  \gamma+2c_1&=p_1+\ti p_1+P+\ti P.\label{eq:1_case02}\\
  \gamma+c_1^2+2c_1\ti c_1&=\ti c_1^2+c_1p_1+\ti c_1\ti p_1+\sum\limits_{i\geq 2} p_i c_i+\sum\limits_{i\geq 2} \ti p_i \ti c_i.\label{eq:2_case02}
\end{align}

Put $a=c_1-\ti c_1$. By \ref{lem:c1B-p1>=2} $a\geq 2$. In particular, $c_1>\ti c_1$. We rewrite the formulas as:
\begin{align}
  \gamma+\beta c_2+c_1-\ti p_1&=P+\ti P.\label{eq:3_case02}\\
  \gamma+\beta c_1c_2+c_1\ti c_1+\ti c_1(c_1-\ti p_1-\ti c_1)&=\sum\limits_{i\geq 2} p_i c_i+\sum\limits_{i\geq 2} \ti p_i \ti c_i.\label{eq:4_case02}
\end{align}

Suppose $c_1-\ti p_1-\ti c_1<0$.  Multiplying \eqref{eq:1_case02} by $c_1$ and subtracting \eqref{eq:2_case02} we get $$\gamma(c_1-1)+c_1^2-2c_1\ti c_1=\ti p_1(c_1-\ti c_1)-\ti c_1^2+\sum\limits_{i\geq 2}p_i(c_1-c_i)+\sum\limits_{i\geq 2}\ti p_i(c_1-\ti c_i),$$ hence \begin{equation}\gamma(c_1-1)+(c_1-\ti c_1)(c_1-\ti c_1-\ti p_1)=\sum\limits_{i\geq 2}p_i(c_1-c_i)+\sum\limits_{i\geq 2}\ti p_i(c_1-\ti c_i).\label{eq:6_case02}\end{equation} Because $\frac{1}{2}c_1>\frac{1}{2}\ti c_1\geq \ti c_2$, we have $c_1-\ti c_i\geq \frac{1}{2}c_1$ for $i\geq 2$, so  \eqref{eq:6_case02} gives $\gamma>\frac{1}{2}(P+\ti P)$. From \eqref{eq:3_case02} we infer that $$\beta c_2+c_1-\ti p_1=P+\ti P-\gamma\leq\gamma-1\leq 4.$$  Since $c_1-\ti p_1\geq c_1-\ti c_1+1\geq 3$, we obtain that $\beta=c_2=1$, so $h=1$. Also $\gamma=5$, $c_1-\ti p_1=3$ and $\ti c_1-\ti p_1=1$. Thus $\ti c_2=1$ and $\ti h=1$. Then $\varepsilon+\gamma=h+\ti h=2$, so $\varepsilon=0$ and $\gamma=2$. We get $\ks\cdot(\ks+D)=2-\varepsilon-\ks\cdot E=2$. The Riemann-Roch theorem gives $-\ks-D\geq 0$. Because $2\ks+D+E\geq 0$ by \ref{cor:eps=3-h0}, we have $\ks+E\geq 0$. This implies that $\ks\geq 0$; a contradiction.

Thus $c_1-\ti p_1-\ti c_1\geq 0$. If $\ti c_2\geq c_2$ then multiplying \eqref{eq:3_case02} by $\ti c_2$ and subtracting \eqref{eq:4_case02} we get \begin{equation}\gamma\ti c_2+\beta c_2\ti c_2+\ti c_2(c_1-\ti p_1)>\beta c_1c_2+c_1\ti c_1+\ti c_1(c_1-\ti p_1-\ti c_1)> (\beta c_2+c_1) \ti c_1\label{eq:5_case02},\end{equation} hence $\gamma+\beta c_2+c_1-\ti p_1>(\beta c_2+c_1)\ti k$. But then $\gamma>(\beta c_2+c_1)(\ti k-1)+\ti p_1\geq c_1+1\geq \ti c_1+3\geq 5$, which is impossible. Thus  $c_2>\ti c_2$.  By \eqref{eq:4_case02} we have $$\beta k c_2^2<\gamma+\beta c_1c_2+c_1\ti c_1\leq \sum\limits_{i\geq 2}p_i c_i+\sum\limits_{i\geq 2} \ti p_i \ti c_i\leq (h+\ti h-2)c_2^2=(\gamma+\varepsilon-2)c_2^2\leq 5c_2^2,$$ so $\beta=1$. We write the formulas in the following form
\begin{align*}\gamma +c_2+\ti c_1 +(c_1-\ti p_1-\ti c_1) &=P+\ti P \\
\gamma+kc_2^2+kc_2\ti c_1+\ti c_1(c_1-\ti p_1-\ti c_1)&=\sum\limits_{i\geq 2} p_ic_i+\sum\limits_{i\geq 2} \ti p_i\ti c_i.
\end{align*}
Multiplying the first one by $c_2$ and subtracting the second one we get $$\gamma(c_2-1)\geq c_2^2(k-1)+\ti c_1c_2(k-1)+(c_1-\ti p_1-\ti c_1)(\ti c_1-c_2).$$ In case $\ti c_1\geq c_2$ we get $$4\geq \gamma-1\geq (k-1)(c_2 +\ti c_1)\geq (k-1)(\ti c_2+1+\ti c_1)\geq 4(k-1),$$ so $k=2$, $c_2=2$, $\ti c_1=2$ and $\gamma=5$. But in the latter case the previous inequality gives $5\geq 4+4$.

Thus $\ti c_1<c_2$. It follows that $\ti c_2<\frac{1}{2}c_2$. The inequality \eqref{eq:3p_case00} holds, so by \eqref{eq:4_case02}
\begin{equation*}
kc_2^2<\sum \limits_{i\geq 2} p_ic_i+\sum\limits_{i\geq 2}\ti p_i\ti c_i\leq \sum\limits_{i\geq 2} p_i c_i+(\ti h-1)\ti c_2^2\leq  (r+1)c_2^2+(h-r-4)c_{r+3}^2 +(\ti h-1)\frac{c_2^2}{4}.
\end{equation*}
Note that $c_2\geq \ti c_1+1>1$, so $h\geq 2$ and hence $\ti h=\gamma+\varepsilon-h\leq 7-h\leq 5$.

We have $r\geq k-1$, otherwise the above inequality gives $$0\leq (1-\frac{\ti h-1}{4})c_2^2<(h-r-4)c_{r+3}^2\leq \frac{c_2^2}{4}(h-r-4),$$ hence $9+r<h+\ti h$, which is impossible, because $h+\ti h\leq 7$. As in the previous two subsections, we can now change coordinates on $S$ so that $U$ has non-simple, separated branches at infinity, it is of type $(0,\ti m)$ for some $\ti m\geq 0$ and its degree is $k\ti c_1+c_2$. Since $\ti c_1<c_2$, we have $k\ti c_1+c_2<kc_2+\ti c_1=c_1+\ti c_1$, so the degree drops.
\end{proof}

\subsection{\texorpdfstring{Types $(0,\ti j)$ for $\ti j\geq 3$}{Types (0,j) for j>=3}} In this section we show the following proposition. Recall that we assume the coordinates on $S=\C^2$ are chosen so that branches of $U$ at infinity are disjoint.

\bprop\label{prop:j>0} $U$ is of type $(1,\ti j)$ for some $\ti j\geq 1$. \eprop

\begin{proof}

Note that by \ref{lem:j<=1} $j\leq 1$. Suppose $j=0$. By \ref{prop:type_(0,0-1-2)} and by induction on the degree of $\bar U$ we may assume that $\ti j\geq 3$. We are going to show this is impossible. Let $A$ be a conic in $\PP^2$ which meets $\lambda$ and which follows $\ti \lambda$ during the first three blowing ups. Blow up once over $\lambda$ and four times over $\ti \lambda$. If $\ti j=3$ then the intersection of the proper transforms of $A$ and $\bar U$ at this stage is $2d-p_1-3\ti c_1-\ti p_1=2c_1-p_1-\ti c_1-\ti p_1$ and for $\ti j>3$ it is $2d-p_1-4\ti c_1=2c_1-p_1-2\ti c_1$. It follows that

\begin{equation}\label{eq:noasy_case03}
  \begin{aligned}
  2c_1-\ti c_1&\geq \ti p_1+p_1 &\text{if } \ti j=3,\\
  2(c_1-\ti c_1)& \geq p_1 &\text{\ if } \ti j>3.
  \end{aligned}
\end{equation}

Put $a=c_1-\ti c_1$. Again, by \ref{lem:c1B-p1>=2} $a\geq 2$. We keep the notation $P=\sum\limits_{i\geq 2}p_i$ and $\ti P=\sum\limits_{i\geq 2}\ti p_i$. The formulas \eqref{eq:1} and \eqref{eq:2} read as
\begin{align}
\gamma+2c_1 &=p_1+(\ti j-2)\ti c_1+\ti p_1+P+\ti P.\label{eq:1_case03}\\
\gamma+c_1^2+2c_1\ti c_1 &=c_1p_1+(\ti j-1)\ti c_1^2+\ti c_1\ti p_1+\sum\limits_{i\geq 2}p_ic_i+\sum\limits_{i\geq 2}\ti p_i\ti c_i.\label{eq:2_case03}
\end{align}
Put $x=2c_1-p_1-(\ti j-2)\ti c_1-\ti p_1=\beta+a-(\ti j-3)\ti c_1-\ti p_1$. We rewrite the equations in the following form
\begin{align}
  \gamma+x &=P+\ti P.\label{eq:3_case03}\\
  \gamma+a(c_1-p_1+\ti c_1)+x\ti c_1 &= \sum\limits_{i\geq 2}p_ic_i+\sum \limits_{i\geq 2}\ti p_i\ti c_i.\label{eq:4_case03}
\end{align}
We divide the proof into two cases.

\bca $x\geq 0$. \eca

Suppose that $\ti c_2\geq c_2$. Then multiplying \eqref{eq:3_case03} by $\ti c_2$ and subtracting \eqref{eq:4_case03} we get
$$\gamma(\ti c_2-1)\geq a(c_1-p_1+\ti c_1)+(\ti c_1-\ti c_2)x,$$ so $\gamma-1\geq a\ti k+x$. From this we obtain that $c_1-\ti c_1=a=2$, $\ti c_1=2\ti c_2$, $\gamma=5$, and $2c_1-\ti c_1-p_1-\ti p_1=x=0$.
From \eqref{eq:3_case03} we get $P+\ti P= 5$, which is possible only if $(h-1)+(\ti h-1)\leq 3$. From \eqref{eq:4_case03} we obtain that $5+2\beta c_2+4\ti c_2\leq Pc_2+\ti P\ti c_2$, hence $(2\beta+\ti P-5)c_2<(\ti P-4)\ti c_2$. If $\ti P-4\leq 0$ then  $(\ti P-4)\ti c_2\leq (\ti P-4)c_2$, hence $2\beta+\ti P-5<\ti P-4$, so $2\beta<1$, which is impossible. Thus $\ti P=5$ and hence $P=0$. Because $x=0$ and $\ti j\geq 3$, we have $\beta c_2\geq \beta\geq \ti p_1-a=\ti p_1-2$, so the inequality gives $1+2\ti p_1\leq \ti c_2$; a contradiction.

Therefore, $c_2>\ti c_2$. In particular, $h\geq 2$. Multiplying \eqref{eq:3_case03} by $c_2$ and subtracting \eqref{eq:3_case03} we get $$\gamma (c_2-1)\geq a(\beta c_2+\ti c_1)+x(\ti c_1-c_2).$$ Suppose $\ti c_1\geq c_2$. Then $\gamma-1\geq a(\beta+1)$, so $\gamma=5$, $a=2$ and $\beta=1$. Then $\ti c_1=c_1-a=kc_2-2$, so the inequality gives $5c_2-5\geq 2(c_2+kc_2-2)$, hence $3>2k$; a contradiction.

Thus $c_2>\ti c_1$.  We rewrite again \eqref{eq:1_case03} and \eqref{eq:2_case03}:

\begin{align}
  \gamma+(k+\beta )c_2 &=(\ti j-2)\ti c_1+\ti p_1+P+\ti P.\label{eq:7n_case03}\\
  \gamma +\beta kc_2^2+2kc_2\ti c_1 &=(\ti j-1)\ti c_1^2+\ti c_1\ti p_1+\sum\limits_{i\geq 2}p_ic_i+\sum\limits_{i\geq 2}\ti p_i\ti c_i.\label{eq:8n_case03}
\end{align}
Multiplying \eqref{eq:7n_case03} by $c_2$ and subtracting \eqref{eq:8n_case03} we get
\begin{equation*}\gamma(c_2-1)\geq ((\beta -1)(k-1)-1)c_2^2+(c_2-\ti c_1)((\ti j-2)\ti c_1+\ti p_1)+\ti c_1(c_1+a).\end{equation*}
It follows that $\beta=1$. Indeed, if $\beta\geq 2$ then the above inequality gives $\gamma c_2>\ti c_1 (c_1+a)=\ti c_1(2c_1-\ti c_1)\geq 2(2c_1-\ti c_1)>2(2c_1-c_2)\geq 6c_2$, which is impossible, because $\gamma\leq 5$. Note that $c_2\geq \ti c_1+1\geq 3$, so the inequality implies also that $c_2\geq 4$. Indeed, for $c_2=3$ we get $\ti c_1=2$, so $2\gamma+9\geq \ti c_1(2c_1-\ti c_1)\geq 2(4c_2-2)=20$, which is impossible. Put $f=c_2-\ti c_1$. We have $f\geq 1$ and $a=(k-1)c_2+f$, so we can rewrite the above inequality as
\begin{equation}c_2(\gamma+c_2)-\gamma\geq f((\ti j-2)\ti c_1+\ti p_1)+\ti c_1((2k-1)c_2+f)\label{eq:9n_case03}.\end{equation}
It follows that \begin{equation}\gamma+c_2>f(\ti j-2)\frac{\ti c_1}{c_2}+\ti c_1(2k-1).\label{eq:9f_case03}\end{equation}

Consider the case $\ti j\geq 6$. Suppose $\ti c_1\geq \frac{1}{2}c_2$. Then \eqref{eq:9f_case03} gives $2\gamma-1\geq f(\ti j-2)+c_2(2k-3)$, so since $c_2\geq 4$, we get $\gamma=5$, $k=2$ and $f=1$. In particular, $\ti c_1=c_2-1$. The inequality \eqref{eq:9n_case03} gives $c_2(5+c_2)-5\geq 4(c_2-1)+1+(c_2-1)(3c_2+1)$, hence $c_2(3-2c_2)\geq 1$; a contradiction. Thus $c_2>2\ti c_1$. We have $(h-1)+(\ti h-1)=\gamma+\varepsilon-\ti j\leq 7-\ti j\leq 1$ and $2kc_2\ti c_1\geq 4c_2\ti c_1\geq 8\ti c_1^2$. It follows from \eqref{eq:8n_case03} that $\gamma+2c_2^2+8\ti c_1^2\leq \ti j\ti c_1^2+(h-1)c_2^2+(\ti h-1)\ti c_2^2\leq \ti j\ti c_1^2+c_2^2+\ti c_1^2$, hence $c_2^2<(\ti j-7)\ti c_1^2$, so $\ti j\geq 8$. But then $h+\ti h\leq 1$; a contradiction.

Thus $\ti j\leq 5$. Suppose $\ti c_1\geq \frac{2}{3} c_2$. Then \eqref{eq:9f_case03} gives $\gamma+c_2>\frac{2}{3}f(\ti j-2)+\frac{2}{3}(2k-1)c_2,$ so $14\geq 3\gamma-1\geq 2f(\ti j-2)+(4k-5)c_2$. Since $c_2\geq 4$, we get $k=2$, $f=1$, $\ti j=3$, $\gamma=5$ and $c_2=4$. But then $\ti c_1=3$ and \eqref{eq:9f_case03} fails; a contradiction.

Thus $c_2>\frac{3}{2}\ti c_1$. Since $(h-1)+(\ti h-1)=\gamma+\varepsilon-\ti j\leq 4$, we have $\sum\limits_{i\geq 2}\ti p_i\ti c_i\leq 4\ti c_2^2\leq \ti c_1^2$. Hence $(\ti j-1)\ti c_1^2+\ti c_1\ti p_1+\sum\limits_{i\geq 2}\ti p_i\ti c_i<6\ti c_1^2\leq 4c_2\ti c_1\leq 2kc_2\ti c_1$. Then \eqref{eq:8n_case03} gives
\begin{equation}kc_2^2<\sum\limits_{i\geq 2}p_ic_i\leq (h-1)c_2^2\leq 4c_2^2,\label{eq:10n_case03}\end{equation}
so $k\leq 3$ and $h-1\geq k+1\geq 3$. If $k=3$ then $h-1=4$, $\ti h=1$, $\ti j=3$ and $\varepsilon +\gamma=7$, so $\gamma=5$, $\varepsilon=2$ and $t=2$ by \ref{prop:basic_inequality}. But in the latter case from \eqref{eq:8n_case03} we get $3c_2^2+6c_2\ti c_1<2\ti c_1^2+\ti c_1\ti p_1+3c_2^2+c_2<3\ti c_1^2+3c_2^2+c_2$, which is impossible, because $c_2>\ti c_1$. Hence $k=2$, i.e.\ $c_1=2c_2$. We claim that $c_2=p_2$ and $c_3=p_3$.  Indeed, if $c_2>p_2$ then $\sum\limits_{i\geq 2}p_ic_i\leq (c_2-c_3)c_2+3c_3^2\leq c_2^2-c_3\cdot 2c_3+3c_3^2=c_2^2+c_3^2\leq \frac{5}{4}c_2^2$ and if $c_3>p_3$ then $\sum\limits_{i\geq 2}p_ic_i=p_2c_2+p_3c_3+p_4c_4\leq c_2^2+(c_2-c_4)c_2+c_4^2< 2c_2^2$, and both inequalities contradict \eqref{eq:10n_case03}. We obtain $h-1\geq 3$.

Suppose that $h-1=3$ and $\ti h-1=0$. Then $Q_0$ is a fork. Because $j=0$ and $\ti j\geq 3$, we have $L_\8'\leq -2$ and $Q_0$ is not of quotient type. $Q_0$ has only one $(-2)$-twig (produced by $\binom{p_1}{c_1}$) and since $k=2$, the twig is irreducible. Note that $\gamma=\ti j+3-\varepsilon\geq \ti j\geq 3$. By \ref{lem:Y_geometry} $Y$ is not ruled and the inequality $\frac{1}{c_4}+\frac{1}{\ti c_1}+\frac{1}{\gamma}\geq 1$ holds. Since $c_4=c_2\geq 4$, we get $\ti c_1=2$. Then $\ti p_1=1$ and \eqref{eq:7n_case03} gives $\gamma=3+p_4-c_2<3$; a contradiction.

Hence $(h-1)+(\ti h-1)=4$. Then $\gamma=5$, $\varepsilon=2$, $\ti j=3$ and $t=2$. Also, $Q_0$ is not of quotient type, so by \ref{lem:k(Y)} $\kappa(Y)\geq 0$. Then \ref{lem:Y_geometry} gives $\frac{1}{c_h}+\frac{1}{\ti c_{\ti h}}\geq \frac{4}{5}$, hence $\{c_h,\ti c_{\ti h}\}=\{2,3\}$ or $\{2,2\}$. Because $c_4=c_2\geq 4$, we have $h\geq 5$, so $h=5$ and $\ti h=1$. From \eqref{eq:7n_case03} we get $3+c_2=\ti c_1+p_4$. Since $\ti c_1\leq 3$ and $p_4\leq c_2$, we get $\ti c_1=3$ and $p_4=c_2$. Hence $p_4=c_4$, so $c_2=c_5\leq 3$; a contradiction, because $c_2\geq 4$.

\bca $x<0$. \eca

We have $2c_1\geq \ti c_1+\ti p_1+p_1$. Since $x=2c_1-p_1-\ti p_1-\ti c_1-\ti c_1(\ti j-3)$, $x<0$ gives $\ti j\geq 4$. We have $$(h-1)+(\ti h-1)=\gamma+\varepsilon-\ti j\leq 7-\ti j\leq 3.$$ We rewrite the formulas \eqref{eq:3_case03}, \eqref{eq:4_case03}:
\begin{align}
  \gamma+x &=P+\ti P,\label{eq:5_case03}\\
  \gamma+\beta a c_2 +a\ti c_1 &=-x\ti c_1 +\sum\limits_{i\geq 2}p_ic_i+\sum\limits_{i\geq 2}\ti p_i\ti c_i.\label{eq:6_case03}
\end{align}
Note that, because $x<0$, \eqref{eq:5_case03} gives $P+\ti P\leq \gamma-1\leq 4$.

Consider the case $(h-1)+(\ti h-1)=3$. Then $P+\ti P\geq 4$ hence, by \eqref{eq:5_case03}, $\gamma=5$, $x=-1$  and $P+\ti P=4$. Also  $\varepsilon=2$, $t=2$ and $\ti j=4$. In particular, $p_h=\ti p_h=1$. Since $P+\ti P=4$, it cannot happen that $h-1=3$ or $\ti h-1=3$. If $h-1=2$ and $\ti h-1=1$ then $p_2=c_3=2$ and \eqref{eq:6_case03} gives $5+2c_2+2\ti c_1\leq 5+\beta a c_2+a \ti c_1=\ti c_1+2c_2+2+\ti c_2$, so $3+\ti c_1\leq \ti c_2$; which is impossible. Similarly, if $h-1=1$ and $\ti h-1=2$ then $\ti p_2=\ti c_3=2$ and then \eqref{eq:6_case03} gives $5+2c_2+2\ti c_1\leq 5+\beta a c_2+a\ti c_1=\ti c_1+c_2+2\ti c_2+2$, so $3+c_2+\ti c_1\leq 2\ti c_2\leq \ti c_1$; again a contradiction.

Consider the case $(h-1,\ti h-1)=(2,0)$. Then $\gamma+\varepsilon=\ti j+2$ and $\ti P=0$. We have $P=p_2+p_3\geq 3$.  Suppose that $p_2=2$. Then $p_3=1$ and $c_3=2$, so $P=3$ and \eqref{eq:5_case03} gives $x=3-\gamma$. From \eqref{eq:6_case03} we get $\gamma+\beta a c_2+a \ti c_1= (\gamma-3)\ti c_1+2c_2+2\leq 2\ti c_1+2c_2+2$, this is a contradiction. Suppose that $p_2=3$. Then $P=4$, $p_3=1$, $c_3=3$, $x=-1$, $\gamma=5$ and \eqref{eq:6_case03} gives $5+\beta a c_2+a \ti c_1=\ti c_1+3c_2+3.$ It follows that $\beta =1$, $a=2$ and $c_2=\ti c_1+2$. We get $c_1=\ti c_1+2=c_2$; a contradiction.

Consider the case $(h-1,\ti h-1)=(0,2)$. Now $\gamma+\varepsilon=\ti j+2$ and $P=0$. As above we have $\ti p_3=1$ and $\ti p_2=2$ or $3$. Suppose that $\ti p_2=3$. Then $\ti P=4$, $\ti p_3=1$, $\ti c_3=3$, $\gamma=5$ and $x=-1$. From \eqref{eq:6_case03} we get $5+\beta a +a\ti c_1=\ti c_1+3\ti c_2+3$. From that we obtain $a=2$, $\ti k=2$ and $4+2\beta=\ti c_2$. Then $\ti c_2$ is even and divisible by $\ti c_3=3$, hence $\ti p_1\geq \ti c_2\geq 6$. But then $x=4-(\ti j-4)\ti c_1-p_1-\ti p_1\leq -2$; a contradiction. Thus $\ti p_2=2$. Then $\ti p_3=1$, $\ti c_3=2$, $\ti P=3$ and $x=3-\gamma$, so \eqref{eq:6_case03} gives $\gamma+\beta a+a\ti c_1=(\gamma-3)\ti c_1+2\ti c_2+2\leq (\gamma-2)\ti c_1^2+2$. It follows that $a=2$ and $\gamma=5$. But then the equation implies that $\gamma$ is even; a contradiction.

Consider the case $(h-1,\ti h-1)=(1,1)$. We still have $\gamma+\varepsilon=\ti j+2$. Suppose that $p_2+\ti p_2=4$. Then $\gamma=5$, $x=-1$ and $p_2, \ti p_2\leq 3$. If $a\geq 3$ then $\beta a c_2\geq p_2c_2$ and $a \ti c_1\geq \ti c_1+\ti p_2\ti c_2$, which is inconsistent with \eqref{eq:6_case03}. So $a=2$ and we have $-1=x=4-(\ti j-4)\ti c_1-p_1-\ti p_1$, i.e.\ $5=(\ti j-4)\ti c_1+p_1+\ti p_1$. We obtain $5\geq p_1+\ti p_1\geq c_2+\ti c_2\geq p_2+1+\ti p_2+1=6$; a contradiction. Suppose that $p_2+\ti p_2=3$. If $x=-1$ then \eqref{eq:6_case03} gives $\gamma +\beta a c_2+a \ti c_1=\ti c_1+p_2c_2+\ti p_2\ti c_2\leq \ti c_1+2c_2+2\ti c_2$, which is impossible. So $x=-2$ and $\gamma=5$. We have $5+\beta a c_2+a \ti c_1=2\ti c_1+p_2c_2+\ti p_2\ti c_2$, so again $a=2$. Then the equality $x=-2$ gives $6-(\ti j-4)\ti c_1=p_1+\ti p_1\geq c_2+\ti c_2\geq p_2+\ti p_2+2=5$, so $\ti j=4$, $p_1=c_2$, $\ti p_1=\ti c_2$ and $c_2+\ti c_2=6$. Now \eqref{eq:6_case03} gives $5+(2\beta +\ti p_2-p_2)c_2=6\ti p_2$. We have $(\ti p_2,p_2)\neq (1,2)$, because otherwise $5+(2\beta -1)c_2=6$, which is impossible, because $c_2>p_2=2$. So $\ti p_2=2$ and $p_2=1$ and then $(2\beta +1)c_2=7$, which is again a contradiction, because $c_2>1$.

Thus we may assume that $p_2=\ti p_2=1$. In particular, $c_2, \ti c_2\geq 2$. We have $x=2a-(\ti j-4)\ti c_1-p_1-\ti p_1$, so $2a-x\geq p_1+\ti p_1$. From \eqref{eq:6_case03} we get $\gamma+(a+x)\ti c_1+(\beta a -1)c_2=\ti c_2$, hence $a+x\leq 0$. Suppose that $a+x= 0$. Then $\ti c_2= \gamma+c_2(\beta a-1)$. We obtain that $2a-x\geq p_1+\ti p_1\geq c_2+\ti c_2= c_2+\gamma+c_2(\beta a -1)$, so $\beta a c_2\leq 2a-x-\gamma=2a-2<2a$, which gives $c_2<2$; a contradiction. We infer that $2-\gamma=x\leq -a-1\leq -3$, so $x=-3$, $\gamma=5$ and $a=2$. From \eqref{eq:6_case03} we get $5+2\beta c_2=\ti c_1+c_2+\ti c_2$. Because $x=-3$, we have $7=(\ti j-4)\ti c_1+p_1+\ti p_1$. If $\ti j>4$ then $7\geq \ti c_1+p_1+\ti p_1\geq \ti c_1+c_2+\ti c_2=5+2\beta c_2\geq 9$, which is impossible. Therefore, $\ti j=4$ and $p_1+\ti p_1=7$. From \eqref{eq:6_case03} we get $5+2\beta c_2=\ti c_1+c_2+\ti c_2=c_1-2+c_2+\ti c_2$ i.e.\ $7-(c_2+\ti c_2)=(k-2\beta)c_2$. Clearly, if $c_2$ is even then $\ti c_2$ is odd. In particular, $c_2+\ti c_2\geq 5$. Because $c_2\geq 2$ divides $7-(c_2+\ti c_2)$, we get $c_2+\ti c_2=7$ and $k=2\beta$. But $\gcd(k,\beta)=1$, so $\beta=1$ and $k=2$. We have $7-c_2=\ti c_2=\ti p_1<\ti c_1=c_1-2=2c_2-2$, so $c_2>3$. By \eqref{eq:noasy_case03} $c_2=p_1\leq 2a=4$, so $c_2=4$ and hence $\ti c_2=3$. By \ref{lem:k(Y)} $\kappa(Y)\geq 0$, so \ref{lem:Y_geometry} gives $\frac{1}{3}+\frac{1}{4}+\frac{1}{5} \geq 1-\frac{1}{|\Gamma(Q_0)|}$. We obtain $d(Q_0)\leq |\Gamma(Q_0)|\leq 4$. Since $L_\8'$ is not a $(-1)$-curve, we check easily that $d(Q_0)>4$; a contradiction.

Consider the case $(h-1,\ti h-1)=(1,0)$. We have now $c_2\geq 2$. Then \eqref{eq:5_case03} gives $x=p_2-\gamma\geq -4$ and \eqref{eq:6_case03} takes the form
 \begin{equation} \gamma+\beta a c_2+a\ti c_1 =-x\ti c_1+p_2c_2.\label{eq:6p_case03} \end{equation}
From this we get
\begin{equation}\gamma+(\beta a-\gamma-x)c_2+(a +x)\ti c_1=0.\label{eq:7_case03}\end{equation} which, because $\ti c_1=c_1-a=kc_2-a$, gives
\begin{equation}c_2(\gamma+x-k(a+x)-\beta a)=\gamma-a(a+x).\label{eq:7m_case03}\end{equation}
In particular, by \eqref{eq:7_case03} we have $a\beta <\gamma+x$ or $x<-a$. Hence $a\beta<5+x\leq 4$ or $a<-x\leq 4$, so $a\leq 3$. Suppose $a=3$. If $3\beta<\gamma+x$ then $\beta=1$, $\gamma=5$ and $x=-1$, so $\gamma-a(a+x)=-1$ and $c_2|1$ by \eqref{eq:7m_case03}, which is impossible. Thus $x<-a$, i.e.\ $x=-4$. Then $\gamma=5$ and \eqref{eq:7m_case03} gives $c_2(k+1-3\beta)=8$, hence $p_1=(k-\beta)c_2=8+(2\beta-1)c_2\geq 10$. But by the definition of $x$ and $a$ we have  $p_1<p_1+\ti p_1+(\ti j-4)\ti c_1=2a-x=10$; a contradiction.

\noin Therefore, $a=2$. For $x=-1$ \eqref{eq:7m_case03} gives $c_2(\gamma-1-k-2\beta)=\gamma$, so $\gamma>1+k+2\beta\geq 5$, which is impossible. For $x=-2$ it gives $c_2(\gamma-2-2\beta)=\gamma$, so $\gamma\geq 3+2\beta$ and consequently $\gamma=5$, $\beta=1$ and $c_2=5$. But in the latter case $p_1\geq c_2>4=2a$, which contradicts \eqref{eq:noasy_case03}. Suppose $x=-3$. Then \eqref{eq:7m_case03} gives $c_2(\gamma-3+k-2\beta)=\gamma+2$. If $\gamma=5$ then $c_2|7$, hence $p_1\geq c_2\geq 7$. But by the definition of $x$ we have $p_1<p_1+\ti p_1+(\ti j-4)\ti c_1=2a-x=7$; a contradiction. Thus $x=-4$. Then $\gamma=5$, $p_2=1$ and $c_2(2(k-\beta)+1)=9$. Since $k-\beta\geq 1$, it follows that $c_2=3$ and $k=\beta+1$. In particular, $p_1=c_2=3$. We have $\ti c_1=c_1-2=3k-2$. By the definition of $x$ we have $\ti p_1+(\ti j-4)(3k-2)=5$, hence $\ti j\leq 5$.  By \ref{lem:k(Y)} $\kappa(Y)\geq 0$, so by \ref{lem:Y_geometry} $\frac{1}{3}+\frac{1}{3k-2}+\frac{1}{5}\geq 1-\frac{1}{|\Gamma(Q_0)|}$, hence $|d(Q_0)|\leq |\Gamma(Q_0)|\leq 4$. We check easily that this is not the case (note $L_\8'$ is a $(-3)$-curve); a contradiction.

Consider the case $(h-1,\ti h-1)=(0,1)$.

\noin We have $\gamma+x=\ti p_2$ and we can write the equation \eqref{eq:6_case03} in the form
\begin{equation} \gamma+\beta a+(a+x)\ti c_1 =\ti p_2\ti c_2.\label{eq:10m_case03} \end{equation}
Because $\beta=c_1-p_1=\ti c_1+a-p_1$, we can rewrite it as
\begin{equation} (2a+x)\ti c_1=\ti p_2 \ti c_2+a(p_1-a)-\gamma.\label{eq:11m_case03} \end{equation}
By the definition of $x$ we have
\begin{equation}p_1+\ti p_1+(\ti j-4)\ti c_1=2a-x.\label{eq:12m_case03} \end{equation}
We have $\ti p_2\ti  c_2=(\gamma+x)\ti c_2\leq \frac{1}{2}(\gamma+x)\ti c_1$, so \eqref{eq:10m_case03} gives $a+x<\frac{1}{2}(\gamma+x)$, hence $a\leq \frac{1}{2}(\gamma-1-x)\leq \frac{1}{2}(\gamma+3)\leq 4$. Suppose $a=4$. Then $\gamma=5$, $x=-4$ and $\ti p_2=1$, so \eqref{eq:10m_case03} gives $5+4\beta=\ti c_2$. But \eqref{eq:12m_case03} gives $\ti c_2\leq \ti p_1<2a-x=12$, so $\beta=1$ and $\ti c_2=9$. By \ref{lem:k(Y)} $\kappa(Y)\geq 0$. We have now $c_1=\ti c_1+4=9\ti k+4\geq 22$, so \ref{lem:BMY} gives $\frac{1}{22}+\frac{1}{9}+\frac{1}{5}\geq \frac{1}{|\Gamma(Q_0)|}$, i.e.\ $|\Gamma(Q_0)|\leq 2$; a contradiction. Thus $a\leq 3$.

Note that by \eqref{eq:12m_case03} $p_1+\ti p_1\leq 2a-x\leq 10$, so $\ti p_2+1\leq \ti c_2\leq \ti p_1\leq 9$ and $p_1\leq 9$. It follows that the expression $M=\ti p_2\ti c_2+a(p_1-a)-\gamma$ is bounded. Suppose $2a+x\neq 0$. Then for each possible choice of $a, x, \gamma, p_1, \ti p_1, \ti c_2, \ti p_2$ and $\gamma$ we can compute $M$, then $\ti c_1=M/(2a+x)$ and then $c_1=\ti c_1+a$. A straightforward verification shows that there are two solutions, both with $\ti j=4$: $(c_1,p_1;\ti c_1,\ti p_1,\ti c_2,\ti p_2)=(6,5;4,2,2,1)$ and $(9,8;6,2,2,1)$. For both \eqref{eq:noasy_case03} fails; a contradiction. Thus $2a+x=0$, which gives $a=2$, $x=-4$ and hence $\gamma=5$ and $\ti p_2=1$. In particular, $Q_0$ is a chain. The equation \eqref{eq:11m_case03} gives $\ti c_2+2p_1=9$. It follows that $\ti c_2$ is odd, so $\ti c_2\geq 3$. By \ref{lem:k(Y)} $\kappa(Y)\geq 0$, so by \ref{lem:Y_geometry} $\frac{1}{c_1}+\frac{1}{\ti c_2}\geq \frac{4}{5}-\frac{1}{|\Gamma(Q_0)|}$. Because $j=0$, we have $(L_\8')^2\leq -3$, so $d(Q_0)\geq 3$. But $c_1=\ti c_1+a\geq 2\ti c_2+2\geq 8$ and we check that the above inequality fails.

We are left with the case $(h-1,\ti h-1)=(0,0)$. We have $x=-\gamma$. The equation \eqref{eq:6_case03} gives $\gamma+\beta a=(\gamma-a)\ti c_1$, hence $a<\gamma$, so $a\leq 4$ and $$(\gamma-2a)\ti c_1=\gamma+a(a-p_1).$$ The definition of $x$ gives $$p_1+\ti p_1+(\ti j-4)\ti c_1=2a+\gamma,$$ so $p_1,\ti p_1\leq 2a+\gamma-1\leq 12$. Assume $\gamma\neq 2a$. For every $a\in\{2,3,4\}$, $\gamma\in\{2,3,4,5\}$ and every $p_1,\ti p_1\leq 2a-\gamma-1$ we computed $\ti c_1=(\gamma+a(a-p_1))/(\gamma-2a)$, $c_1=\ti c_1+a$ and $\ti j=(2a-\gamma-p_1-\ti p_1)/\ti c_1$ and we checked that only three solutions satisfy \eqref{eq:noasy_case03}. These are $(c_1,p_1,\ti c_1,\ti p_1)=(9,1,7,1)$, $(7,2,5,2)$ and $(7,6,4,1)$, all with $\gamma=\ti j=5$. In all cases $\gamma+t\geq 6$, so $\kappa(Y)\geq 0$. Since $d(Q_0)>2$ we check easily that the log BMY inequality (\ref{lem:BMY}) fails; a contradiction. Assume $\gamma=2a$. Then $a=2$ and $\gamma=4$, so the equation gives $p_1=4$. Because $Q_0$ is a chain, we have $\kappa(Y)\geq 0$, so $\frac{1}{\ti c_1+2}+\frac{1}{\ti c_1}\geq 1-\frac{1}{d(Q_0)}$. By the definition of $x$ we get $\ti p_1+(\ti j-4)\ti c_1=4$. If $\ti j>4$ then $\ti j=5$, $\ti p_1=1$ and $\ti c_1=3$, so $d(Q_0)\leq 4$; a contradiction. Thus $\ti j=4$ and $\ti p_1=4$. Then $\ti c_1\geq 5$, so $d(Q_0)\leq 2$; a contradiction.
\end{proof}

\bcor $\ti j\leq 6$. \ecor

\begin{proof} By \ref{lem:Psi_properties}(ii) $\gamma'=\gamma$, so the equation \eqref{eq:h+hB} gives $\ti j+1=\gamma+\varepsilon+2-h-\ti h$. By \ref{prop:no_simple_branch} $h,\ti h\geq 1$, so we get $\ti j+1\leq \gamma+\varepsilon$. Suppose $\ti j\geq 7$. Then $\gamma+\varepsilon\geq 8$, so \ref{prop:gamma<=5} gives $\gamma=5$ and $\varepsilon\geq 3$. We get a contradiction with \ref{prop:basic_inequality}.
\end{proof}

Proposition \ref{prop:j>0} implies that for $\C^*$-embeddings $U\mono \C^2$ which do not admit a good asymptote one can choose coordinates in which the type of $U$ at infinity is $(j,\ti j)$ for some $j,\ti j>0$. But $j>0$ if and only if the line tangent to $\lambda$, which in the spirit of elementary planar geometry, should be called \emph{an asymptote} of $U$, is different than $L_\8$. An analogous remark holds for $\ti j$. In view of results in \cite{CKR-Cstar_good_asymptote} we obtain the following result, which shows that most closed $\C^*$-embeddings are hyperbola-like in suitable coordinates.

\begin{proposition}\label{asymptotes} Let $U\subset \C^2$ be a closed $\C^*$-embedding. Then we can choose coordinates on $\C^2$ with respect to which $U$ has at least one asymptote (in the sense of elementary planar geometry). If the embedding is not as in cases 6.8.1.2(b) and 6.8.1.3 of \cite{CKR-Cstar_good_asymptote}, in particular if it does not admit a good asymptote, then we can choose coordinates with respect to which $U$ has two distinct asymptotes. \end{proposition}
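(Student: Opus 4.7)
The plan is to split the argument according to whether $U$ admits a good asymptote.

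For the sporadic case I would invoke \ref{thm:main_result}(1) (equivalently \ref{prop:j>0}): we may choose coordinates so that the branches $\lambda, \tilde\lambda$ of $\bar U$ at infinity are disjoint and the type is $(1,\tilde j)$ with $\tilde j\geq 1$. Since both jumping numbers are positive, the characterization of $j(\lambda,L_\8)$ recorded before \ref{def:type} implies that neither $\lambda$ nor $\tilde\lambda$ is tangent to $L_\8$. Hence the projective tangent line to $\lambda$ at $\lambda\cap L_\8$ is a line $\ell\neq L_\8$, and its affine part $\ell\cap \C^2$ is a straight line tangent to a branch of $U$ at infinity, i.e.\ an asymptote in the sense of elementary planar geometry. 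An analogous construction at $\tilde\lambda$ yields another asymptote $\tilde\ell$. If $\ell=\tilde\ell$, this line would pass through the two distinct points $\lambda\cap L_\8$ and $\tilde\lambda\cap L_\8$ and hence coincide with $L_\8$, contradicting $\ell\neq L_\8$. Thus $\ell$ and $\tilde\ell$ are two distinct asymptotes and the second claim holds in the sporadic case.

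For an embedding that admits a good asymptote $A$, the Abhyankar-Moh-Suzuki theorem allows us to pick coordinates in which $A$ is a coordinate line. Because $A\cdot U\leq 1$ in $\C^2$ while $A\cdot \bar U=\deg \bar U$ on $\PP^2$, the line $A$ meets $\bar U$ at a point of $L_\8$ with multiplicity at least $\deg \bar U-1$. Whenever $\deg \bar U\geq 3$ this is $\geq 2$, so $A$ is tangent to the corresponding branch at infinity and is an asymptote; the residual $\deg \bar U\leq 2$ cases are conic-like and classical. This produces one asymptote in all cases and completes the proof of the first claim of the proposition.

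The second claim in the non-sporadic regime is to be obtained by traversing the classification of good-asymptote $\C^*$-embeddings in \cite{CKR-Cstar_good_asymptote}. For each entry on that list outside 6.8.1.2(b) and 6.8.1.3 one verifies that, possibly after a further coordinate change preserving $A$ as a straight line, the second branch at infinity also has positive jumping number with respect to $L_\8$, so that the tangent-line construction used in the sporadic case yields an asymptote at that branch distinct from $A$. The main obstacle is the case analysis itself: the content of 6.8.1.2(b) and 6.8.1.3 is precisely that no choice of coordinates can simultaneously separate the two branches and keep both of them non-tangent to $L_\8$, so in these exceptional configurations only the good asymptote $A$ survives as an asymptote and no second one can be manufactured. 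This explains why \ref{thm:main_result}(1) was phrased to force \emph{both} jumping numbers to be positive, and it also closes the proof in the non-sporadic regime.
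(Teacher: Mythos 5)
Your proposal matches the paper's (largely implicit) argument: the sporadic case is exactly the paper's deduction from Proposition \ref{prop:j>0} that both jumping numbers are positive, so the tangent lines at the two separated points at infinity differ from $L_\8$ and hence give two distinct asymptotes, while the non-sporadic case is handled, as in the paper's remark after Theorem \ref{thm:main_result}, by AMS-straightening the good asymptote and deferring the two-asymptote claim to the classification in \cite{CKR-Cstar_good_asymptote}. This is essentially the same proof, with your explicit check that $\ell\neq\tilde\ell$ being a small but harmless addition.
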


\section{\texorpdfstring{Type $(1,1)$}{Type (1,1)}} We keep the notation from previous sections. In particular, $(j,\ti j)$ is the type at infinity of the $\C^*$-embedding $U\mono S=\C^2$. By \ref{prop:j>0} and \ref{lem:j<=1} we have $j=1$. Here we show that $\ti j\geq 2$.

Assume $U$ is of type $(j,\ti j)=(1,1)$. The formulas \eqref{eq:1} and \eqref{eq:2} read as
\begin{align*}\gamma+c_1+\ti c_1 &=p_1+\ti p_1+P+\ti P.\\
\gamma+2c_1\ti c_1 &=c_1p_1+\ti c_1\ti p_1+\sum\limits_{i\geq 2}p_ic_i+\sum\limits_{i\geq 2}\ti p_i\ti c_i.
\end{align*}
By \ref{lem:c1B-p1>=2} (and its analogue for $\ti \lambda$) we have
\begin{equation}\ti c_1-p_1\geq 2 \text{\ \ and\ \ } c_1-\ti p_1\geq 2\label{eq:noasy_case11}.\end{equation}
We may assume that $c_1\geq \ti c_1$. Let $x=c_1+\ti c_1-p_1-\ti p_1$. We rewrite the formulas in the following form. \begin{align}
\gamma+x &=P+\ti P.\label{eq:1_case11}\\
\gamma+c_1x&=(c_1-\ti c_1)(c_1-\ti p_1) +\sum\limits_{i\geq 2}p_ic_i+\sum\limits_{i\geq 2}\ti p_i\ti c_i.\label{eq:2_case11}
\end{align}
Multiplying \eqref{eq:1_case11} by $c_1$ and subtracting \eqref{eq:2_case11} we obtain
\begin{equation}
  \gamma(c_1-1)=(c_1-\ti c_1)(c_1-\ti p_1)+\sum\limits_{i\geq 2}p_i(c_1-c_i)+\sum\limits_{i\geq 2}\ti p_i(c_1-\ti c_i).\label{eq:3_case11}
\end{equation}
Since $c_1\geq \ti c_1\geq 2\ti c_2$, we have $c_1-\ti c_i\geq \frac{c_1}{2}$ for $i\geq 2$. Also $c_1-c_i\geq \frac{c_1}{2}$ for $i\geq 2$. We get $$\gamma(c_1-1)\geq (c_1-\ti c_1)(c_1-\ti p_1)+\frac{c_1}{2}(P+\ti P),$$ hence $P+\ti P\leq 2\gamma-1$. Then \eqref{eq:1_case11} gives $x\leq \gamma-1\leq 4$. But we have $x=c_1+\ti c_1-p_1-\ti p_1=c_1-\ti p_1+\ti c_1-p_1\geq 2+2=4$ by \eqref{eq:noasy_case11}, so $x=4$, $\gamma=5$ and $P+\ti P=9$. Also $c_1-\ti p_1=2$ and $\ti c_1-p_1=2$. We have $(h-1)+(\ti h-1)=3+\varepsilon$ by \eqref{eq:h+hB}. Let $c_1-p_1=\beta c_2$, $\ti c_1-\ti p_1=\ti \beta \ti c_2$. We have
\begin{equation}x=c_1-p_1+\ti c_1-\ti p_1=\beta c_2+\ti \beta \ti c_2=4.\label{eq:(i)_case11}
\end{equation}
By \eqref{eq:2_case11}
\begin{equation}5+2c_1+2\ti c_1=\sum\limits_{i\geq 2}p_ic_i+\sum\limits_{i\geq 2} \ti p_i\ti c_i.\label{eq:(ii)_case11}
\end{equation}

If $h>1$ and $\ti h>1$ then from \eqref{eq:(i)_case11} we get $c_2=\ti c_2=2$ and $\beta =\ti \beta =1$, hence $P+\ti P=2(h-2)+1+2(\ti h-2)+1=2h+2\ti h-2$ is even. But we have already shown that $P+\ti P=9$, so $h=1$ or $\ti h=1$. Suppose that $h=1$. Then $\ti h-1=3+\varepsilon\geq 3.$ We have also $\ti P=9$. \eqref{eq:(i)_case11} gives $\beta +\ti \beta \ti c_2=4$. If $\beta =2$ then $\ti c_2=2$, which implies that $\sum\limits_{i\geq 2} \ti p_i\ti c_i$ is even; a contradiction with \eqref{eq:(ii)_case11}. Thus $\beta =1$ and then $\ti \beta =1$, $\ti c_2=3$. We get $9=\ti P=3(\ti h-2)+\ti p_h$. It implies that $3$ divides $p_h$. This is impossible, because $p_{\ti h}<\ti c_h=3$. Thus $\ti h=1$. But then we reach a contradiction the same way as for $h=1$.

\bibliographystyle{amsalpha}
\bibliography{bibl}

\providecommand{\bysame}{\leavevmode\hbox to3em{\hrulefill}\thinspace}
\providecommand{\MR}{\relax\ifhmode\unskip\space\fi MR }
\providecommand{\MRhref}[2]{%
  \href{http://www.ams.org/mathscinet-getitem?mr=#1}{#2}
}
\providecommand{\href}[2]{#2}
\begin{thebibliography}{CNKR09}

\bibitem[Bri68]{Brieskorn}
Egbert Brieskorn, \emph{Rationale {S}ingularit\"aten komplexer {F}l\"achen},
  Invent. Math. \textbf{4} (1967/1968), 336--358.

\bibitem[BZ10]{BoZo-annuli}
Maciej Borodzik and Henryk \.{Z}o\l\c{a}dek, \emph{Complex algebraic plane
  curves via {P}oincar\'e-{H}opf formula. {II}. {A}nnuli}, Israel J. Math.
  \textbf{175} (2010), 301--347,
  (\href{http://arxiv.org/abs/0708.1661}{arXiv:0708.1661}).

\bibitem[CNKR09]{CKR-Cstar_good_asymptote}
Pierrette Cassou-Nogues, Mariusz Koras, and Peter Russell, \emph{Closed
  embeddings of {$\mathbb{C}^*$} in {$\mathbb{C}^2$}. {I}}, J. Algebra
  \textbf{322} (2009), no.~9, 2950--3002.

\bibitem[Fuj82]{Fujita-noncomplete_surfaces}
Takao Fujita, \emph{On the topology of noncomplete algebraic surfaces}, J. Fac.
  Sci. Univ. Tokyo Sect. IA Math. \textbf{29} (1982), no.~3, 503--566.

\bibitem[KR99]{KR-C*_actions_on_C3}
Mariusz Koras and Peter Russell, \emph{{${\bf C}\sp *$}-actions on {${\bf C}\sp
  3$}: the smooth locus of the quotient is not of hyperbolic type}, J.
  Algebraic Geom. \textbf{8} (1999), no.~4, 603--694.

\bibitem[KR07]{KR-Contractible_surfaces}
\bysame, \emph{Contractible affine surfaces with quotient singularities},
  Transform. Groups \textbf{12} (2007), no.~2, 293--340.

\bibitem[KR11]{KR-Some_properties_of_Cstar}
\bysame, \emph{Some properties of {$\mathbb{C}^*$} in {$\mathbb{C}^2$}.},
  {Affine algebraic geometry. Proceedings of the conference, Osaka, Japan,
  March 3--6, 2011. Dedicated to Professor Masayoshi Miyanishi on the occasion
  of his 70th birthday.}, 2011, pp.~160--197.

\bibitem[Lan03]{Langer}
Adrian Langer, \emph{Logarithmic orbifold {E}uler numbers of surfaces with
  applications}, Proc. London Math. Soc. (3) \textbf{86} (2003), no.~2,
  358--396.

\bibitem[Miy84]{Miyaoka}
Yoichi Miyaoka, \emph{The maximal number of quotient singularities on surfaces
  with given numerical invariants}, Math. Ann. \textbf{268} (1984), no.~2,
  159--171.

\bibitem[Miy01]{Miyan-OpenSurf}
Masayoshi Miyanishi, \emph{Open algebraic surfaces}, CRM Monograph Series,
  vol.~12, American Mathematical Society, Providence, RI, 2001.

\bibitem[MT84]{MiTs-PlatFibr}
Masayoshi Miyanishi and Shuichiro Tsunoda, \emph{Noncomplete algebraic surfaces
  with logarithmic {K}odaira dimension {$-\infty$} and with nonconnected
  boundaries at infinity}, Japan. J. Math. (N.S.) \textbf{10} (1984), no.~2,
  195--242.

\bibitem[Mum61]{Mumford}
David Mumford, \emph{The topology of normal singularities of an algebraic
  surface and a criterion for simplicity}, Inst. Hautes \'Etudes Sci. Publ.
  Math. (1961), no.~9, 5--22.

\bibitem[Pal11]{Palka-exceptional}
Karol Palka, \emph{Exceptional singular {$\mathbb{Q}$}-homology planes}, Ann.
  Inst. Fourier (Grenoble) \textbf{61} (2011), no.~2, 745--774,
  \href{http://arxiv.org/abs/0909.0772}{arXiv:0909.0772}.

\bibitem[Rus80]{Russell_HN_pairs}
Peter Russell, \emph{Hamburger-{N}oether expansions and approximate roots of
  polynomials}, Manuscripta Math. \textbf{31} (1980), no.~1-3, 25--95.

\end{thebibliography}

\end{document}